\newtheorem*{rep@theorem}{\rep@title}
\newcommand{\newreptheorem}[2]{%
\newenvironment{rep#1}[1]{%
\def\rep@title{#2 \ref{##1}}%
\begin{rep@theorem}}%
{\end{rep@theorem}}}
 \newtheorem{theorem}{Theorem}[section]
 \newtheorem{proposition}[theorem]{Proposition}
 \newtheorem{corollary}[theorem]{Corollary}
 \newtheorem{lemma}[theorem]{Lemma}
 \theoremstyle{definition}
 \newtheorem{definition}[theorem]{Definition}
  \newtheorem{remark}[theorem]{Remark}
\definecolor{darkblue}{rgb}{0.0, 0.0, 0.8}
\definecolor{darkred}{rgb}{0.8, 0.0, 0.0}
\definecolor{darkgreen}{rgb}{0.0, 0.8, 0.0}
\definecolor{ncolor}{rgb}{0.8, 0.8, 0.0}
\newcommand{\supp}{\mathrm{supp}}
\newcommand{\fmm}[1]{\mathcal{#1}}
\newcommand{\mm}[1]{\mathscr{#1}}
\newcommand{\dgh}{d_{GH}}
\newcommand{\dgp}{d_{GP}}
\newcommand{\dha}{d_{H}}
\newcommand{\diam}{\mathrm{diam}}
\newcommand{\rad}{\mathrm{rad}}
\newcommand{\dis}{\mathrm{dis}}
\newcommand{\dist}{\mathrm{dist}}
\newcommand{\real}{\mathbb{R}}
\newcommand{\R}{\real}
\newcommand{\vr}{V\!R}
\newcommand{\dwa}[1]{d_{W,{#1}}}
\newcommand{\dwp}{d_{W,p}}
\newcommand{\dwi}{d_{W,\infty}}
\newcommand{\dgw}[1]{d_{GW,{#1}}}
\newcommand{\dgwp}{d_{{GW,p}}}
\newcommand{\dgwi}{\ensuremath{d_{GW,\infty}}}
\newcommand{\dto}{\ensuremath{\Rightarrow}}
\newcommand{\dI}{\ensuremath{d_I}}
\newcommand{\dHI}{d_{HI}}
\newcommand{\dpr}{\ensuremath{d_P}}
\newcommand{\calX}{\mathcal{X}}
\newcommand{\calY}{\mathcal{Y}}
\newcommand{\calZ}{\mathcal{Z}}
\newcommand{\calMM}{\widehat{\mathcal{F}}}
\newcommand{\calC}{\mathcal{C}}
\newcommand{\calE}{\mathcal{E}}
\newcommand{\calF}{\mathcal{F}}
\newcommand{\calR}{\mathcal{R}}
\newcommand{\calRB}{\mathcal{R}_B}
\newcommand{\calRBn}{\mathcal{R}^n_B}
\newcommand{\calDX}{\mathcal{D}_\mathcal{X}}
\newcommand{\calDY}{\mathcal{D}_\mathcal{Y}}
\newcommand{\calDXn}{\mathcal{D}_\mathcal{X}^n}
\newcommand{\calDYn}{\mathcal{D}_\mathcal{Y}^n}
\newcommand{\calFX}{\mathcal{F}_{\mathcal{X}}}
\newcommand{\calFY}{\mathcal{F}_{\mathcal{Y}}}
\newcommand{\calFXn}{\mathcal{F}_{\mathcal{X}}^n}
\newcommand{\calFYn}{\mathcal{F}_{\mathcal{Y}}^n}
\newcommand{\muX}{\mu_X}
\newcommand{\muY}{\mu_Y}
\newcommand{\psiX}{\psi_X}
\newcommand{\psiY}{\psi_Y}
\newcommand{\iX}{\iota_X}
\newcommand{\iY}{\iota_Y}
\newcommand{\piX}{\pi_X}
\newcommand{\piY}{\pi_Y}
\newcommand{\piZ}{\pi_Z}
\newcommand{\dX}{d_X}
\newcommand{\dY}{d_Y}
\newcommand{\dZ}{d_Z}
\newcommand{\dB}{d_B}
\newcommand{\mXY}{m_{X,Y}}
\newcommand{\cXY}{C(\muX,\muY)}
\newcommand{\dXY}{d_{X,Y}}
\newcommand{\dP}{d\mu}
\newcommand{\dPP}{d(\mu \otimes \mu)}
\newcommand{\dPn}{d\mu_n}
\newcommand{\dPPn}{d(\mu_n \otimes \mu_n)}
\newcommand{\sP}{\mathrm{supp} \,\mu}
\newcommand{\sPP}{\mathrm{supp} \,(\mu \otimes \mu)}
\newcommand{\costpPm}{\bigg( \int \mXY^p \,\dPP \bigg)^{1/p}}
\newcommand{\costpPd}{\bigg( \int \dXY^p \,\dP \bigg)^{1/p}}
\newcommand{\costiPm}{\sup_{\sPP}  \mXY}
\newcommand{\costiPd}{\sup_{\sP}\, \dXY}
\newcommand{\costiP}{\max \left\{ \frac{1}{2} \costiPm,
\, \costiPd  \right\}}
\newcommand{\costpP}{ \max \bigg\{ \frac{1}{2} \costpPm, \costpPd \bigg\}}
\newcommand{\unxpe}{U_{X,p}^{n,\epsilon}}
\newcommand{\unype}{U_{Y,p}^{n,\epsilon}}
\newcommand{\unxqe}{U_{X,q}^{n,\epsilon}}
\newcommand{\unyqe}{U_{Y,q}^{n,\epsilon}}
\newcommand{\cnxqe}{C_{X,q}^{n,\epsilon}}
\newcommand{\cnyqe}{C_{Y,q}^{n,\epsilon}}
\newcommand{\cat}[1]{\text{#1}}
\newcommand{\urysohnB}{\mathscr{U}_B}
\newcommand{\lawsB}{\mathcal{L}_B}
\begin{document}

\title{On Metrics for Analysis of Functional Data on Geometric Domains\footnote{An early version of this manuscript was posted to arXiv as \cite{anbouhi2022universal}.}}
\author[1]{Soheil Anbouhi}
\author[2]{Washington Mio}
\author[3]{Osman Berat Okutan}
\affil[1]{Department of Mathematics and Statistics, Haverford College\\ \texttt{sanbouhi@haverford.edu}}
\affil[2]{Department of Mathematics, Florida Sate University\\ \texttt{wmio@fsu.edu}}
\affil[3]{Max Planck Institute for Mathematics in the Sciences\\ \texttt{okutan@mis.mpg.de}}
\date{ }

\maketitle

\vspace{-0.2in}
\begin{abstract}
This paper employs techniques from metric geometry and optimal transport theory to address questions related to the analysis of functional data on metric or metric-measure spaces, which we refer to as fields. Formally, fields are viewed as 1-Lipschitz mappings between Polish metric spaces with the domain possibly equipped with a Borel probability measure. We introduce field analogues of the Gromov-Hausdorff, Gromov-Prokhorov, and Gromov-Wasserstein distances, investigate their main properties and provide a characterization of the Gromov-Hausdorff distance in terms of isometric embeddings in a Urysohn universal field. Adapting the notion of distance matrices to fields, we formulate a discrete model, obtain an empirical estimation result that provides a theoretical basis for its use in functional data analysis, and prove a field analogue of Gromov's Reconstruction Theorem. We also investigate field versions of the Vietoris-Rips and neighborhood (or offset) filtrations and prove that they are stable with respect to appropriate metrics.
\end{abstract}

\smallskip

{\em Keywords:} functional data, Urysohn field, optimal transport, functional curvature.

\smallskip
{\em 2020 Mathematics Subject Classification:} 51F30, 60B05, 60B10.

\tableofcontents


\section{Introduction}\label{sec:intro}

\subsection{Overview}

This paper addresses problems in {\em functional metric geometry} that arise in the study of data such as signals recorded on geometric domains or the nodes of a network. Formally, these may be viewed as functions defined on metric spaces, sometimes equipped with additional structure such as a probability measure, in which case the domain is referred to as a {\em metric-measure space}, or simply $mm$-space. Datasets comprising such objects arise in many domains of scientific and practical interest. For example, on a social network, the edges normally represent some form of direct interaction but a metric $d$ on $V$, such as the shortest-path distance, the diffusion distance, or the commute-time distance \cite{coifman, martinez2019probing}, is useful in quantifying indirect interactions as well. A probability distribution $\mu$ on its set $V$ of nodes can be used to describe how influential the various members of the network are. The triple $(V,d,\mu)$ defines an $mm$-space. Attributes such as individual preferences, traits or characteristics may be viewed as a function $f \colon V \to B$, where $B$ is a metric space such as $\real^n$ for vector-valued attributes, or $\mathbb{Z}_2^n$ (binary words of length $n$) equipped with the $\ell_1$-norm for discrete attributes such as a like-or-dislike compilation of preferences. The quadruple $(V,d,\mu,f)$ is a functional $mm$-space that can be employed for data representation and analysis in many different scenarios. Social networks are dynamic, with individuals joining and leaving the network, their relevance changing over time, as well as their attributes \cite{tantipathananandh2007framework, sekara2016fundamental,doreian2013evolution}. This leads to a family of functional $mm$-space $(V_t, d_t, \mu_t, f_t)$ parameterized by time. To analyze, visualize and summarize dynamical structural and functional changes, it is important to define metrics that are sensitive to such changes and amenable to computation. Targeting problems such as this involving functional data, our primary goal is threefold: (i) to develop metrics that allow us to model and quantify variation in functional data, possibly with distinct domains, (ii) to investigate principled empirical estimations of these metrics, and (iii) to construct stable filtered spaces or complexes associated with functional data to enable geometric analysis via topological methods.

The analysis of structural variation in datasets comprising geometric objects has been a subject of extensive study using techniques from areas such as metric geometry, optimal transport theory, and topological data analysis. The Gromov-Hausdorff distance $d_{GH}$ \cite{burago2001course} has played a prominent role in quantifying shape contrasts and similarities in families of compact metric spaces and in addressing the stability of topological signatures of the shape of objects such as point clouds \cite{chazal2009gromov, carlsson2010multiparameter}. These topological signatures also provide lower bounds to $d_{GH}$ that are computationally more accessible \cite{chazal2009gromov}. For $mm$-spaces, a similar role is played by the Gromov-Prokhorov distance $d_{GP}$ \cite{gromov2007metric, greven2009convergence, loehr2013equivalence, janson2020gromov} and the Gromov-Wasserstein distance $d_{GW}$ \cite{memoli2011gromov, sturm2006geometry} that highlight the shape of regions of larger probability mass. As ubiquitous as $d_{GH}$, $d_{GP}$ and $d_{GW}$ have been in geometric and topological data analysis \cite{chazal2009gromov, memoli2011gromov, blumberg2022stability}, only a few aspects of their functional counterparts have been investigated (cf.\,\cite{bauer2014measuring, chazal2009gromov, carlsson2010multiparameter, hang2019topological, vayer2020fused}) with a  more thorough study seemingly lacking in the literature. This paper carries out such a study and also investigates discrete representations of $mm$-fields by means of {\em functional curvature sets} that encode their structural and functional shape.

The aforementioned topological signatures are frequently derived from filtered complexes or spaces such as the Vietoris-Rips filtration of a metric point cloud \cite{vietoris1927hoheren, carlsson2006algebraic}, the neighborhood filtration (also known as the offset filtration) of a compact subspace of a metric space \cite{chazal2009gromov, halperin2015offset}, or the metric-measure bifiltration of an $mm$-space \cite{blumberg2022stability, sheehy2012multicover}. For this reason, we also study variants of such (multiparameter) filtrations in the functional setting, establishing stability results that ensure that they can be used reliably in data analysis.

\subsection{Main Results}

We study the class of 1-Lipschitz maps $\pi_X \colon X \to B$, where $X$ and $B$ are Polish (complete and separable) metric spaces, $B$ fixed. These mappings are the morphisms in the category $\cat{Met}$ of metric spaces restricted to Polish spaces. We refer to such 1-Lipschitz mappings as {\em $B$-valued fields} on $X$, or simply $B$-fields, and denote them as triples $\mm{X}=(X, d_X, \pi_X)$. If $(X,d_X)$ is also equipped with a Borel probability measure $\mu_X$, we refer to the quadruple $\mm{X}=(X, d_X, \pi_X, \mu_X)$ as an $mm$-field over $B$.

\medskip 
\noindent
{\bf Metric Fields.} We define the Gromov-Hausdorff distance $\dgh$ between compact $B$-fields as the infimum of the Hausdorff distance between isometric embeddings into common $B$-fields. Analogous to the corresponding result for compact metric spaces \cite{burago2001course}, we provide a characterizations for $\dgh$ in terms of functional distortions and use it to prove the following theorem.
\begin{reptheorem}{thm:gh_metric}
The Gromov-Hausdorff distance $\dgh$ metrizes the moduli space $\fmm{F}_B$ of compact $B$-fields and $(\fmm{F}_B,\dgh)$ is a Polish metric space.
\end{reptheorem}
We also show in Proposition \ref{prop:geo} that $(\fmm{F}_B,\dgh)$ is a geodesic space if and only if $B$ is a geodesic space. A second characterization of $\dgh$ is in terms of isometric embeddings into a fixed {\em Urysohn universal field} $\mm{U}_B$ modulo the action of isometries. This Urysohn $B$-field has the property that any other $B$-field can be isometrically embedded in it and any two embeddings of the same $B$-field differ by the (left) action of an automorphism of $\mm{U}_B$. 

Let $F(\mm{U}_B)$ be the space of compact subfields of a Urysohn field $\mm{U}_B = (U,B,\pi_U)$, equipped with the Hausdorff distance, and $Aut(B)$ the automorphism group of $\mm{U}_B$, which acts on $\mm{U}_B$ by isometries. Denote the quotient metric on $F(\mm{U}_B)/Aut(B)$ by $d_F^B$.
\begin{reptheorem}{thm: F_B = F_I}
The moduli space $(\fmm{F}_B, \dgh)$ of isometry classes of compact $B$-fields equipped with the Gromov-Hausdorff distance is isometric to the quotient space $(F(\mm{U}_B)/Aut(B), d_F^B)$.
\end{reptheorem}

\medskip
\noindent
{\bf Metric-Measure Fields.} We define and investigate the main properties of $mm$-field analogues of the Gromov-Prokhorov and Gromov-Wasserstein distances that have been studied extensively in the realm of $mm$-spaces \cite{gromov2007metric, greven2009convergence,memoli2011gromov, sturm2006geometry}. For $mm$-fields $\mathcal{X}$ and $\mathcal{Y}$ over $B$, the Gromov-Prokhorov distance is denoted $\dgp(\mathcal{X},\mathcal{Y})$, whereas the Gromov-Wasserstein distance depends on a parameter $1 \leq p \leq \infty$ and is denoted $\dgw{p}(\mathcal{X},\mathcal{Y})$. Two different approaches to Gromov-Wasserstein have been developed for $mm$-spaces in \cite{memoli2011gromov} and \cite{sturm2006geometry} and the field counterpart we present is along the lines of \cite{memoli2011gromov}.

We define $\dgp(\mathcal{X},\mathcal{Y})$ as the infimum of the Hausdorff distances between isometric embeddings of $\mathcal{X}$ and $\mathcal{Y}$ into commom $B$-fields and Theorem \ref{thm:gp_coupling} shows how to express $\dgp$ in terms of couplings. This characterization is used in Theorem \ref{thm:gp_metric} to prove that $\dgp$ metrizes the set $\calMM_B$ of isometry classes of fully supported $mm$-fields over $B$ making $(\calMM_B, \dgp)$ a Polish space.

The Gromov-Wasserstein distance $\dgw{p} (\mathcal{X},\mathcal{Y})$, $1 \leq p \leq \infty$, is defined through couplings and Theorem \ref{thm:gwuniform} provides the following characterization of $\dgw{\infty}$ in terms of equidistributed sequences $(x_i)$ and $(y_i)$ in $(X,d_X, \mu_X)$ and $(Y, d_Y, \mu_Y)$, respectively:
\[
\dgwi(\calX,\calY)= \inf \max \left\{ \frac{1}{2} \sup_{i,j} |d_X(x_i,x_j) - d_Y (y_i,y_j)|, \, \sup_i d_B(\pi_X(x_i), \pi_Y (y_i)) \right\},
\]
with the infimum taken over all equidistributed sequences $(x_i)$ and $(y_i)$. (A sequence $(x_i)$ in $(X, d_X)$ is $\mu_X$-equidistributed if the empirical measures $\sum_{i=1}^n \delta_{x_i}/n$ converge weakly to $\mu_X$.) To our knowledge, this result is new even for the Gromov-Wasserstein distance between metric-measure spaces, a result that follows by taking the target space $B$ to be a singleton.

With an eye toward empirical estimation of the Gromov-Wasserstein distance between $mm$-fields, we introduce the notion of {\em extended distance matrices}, much in the way distance matrices are used to study $mm$-spaces (cf.\,\cite{gromov2007metric, gomez2021curvature}). For an $mm$-field $\fmm{X}$ over $B$ and a sequence $(x_i)$ in $X$, $i \geq 1$, form the countably infinite (pseudo) distance matrix $R = (r_{ij}) \in \real^{\mathbb{N} \times \mathbb{N}}$ and the infinite sequence $b = (b_i) \in B^{\mathbb{N}}$, where $r_{ij} = d_X (x_i, x_j)$ and $b_i = \pi(x_i) \in B$. We refer to the pair $(R, b)$ as the {\em augmented distance matrix} of $\fmm{X}$ associated with the sequence $(x_i)$, which records the shape of the graph of $\pi_X$ restricted to the sequence. This construction let us define a Borel measurable mapping $F_{\fmm{X}} \colon X^\infty \to \real^{\mathbb{N} \times \mathbb{N}} \times B^\infty$, with both the domain and co-domain equipped with the weak topology. The pushforward of $\mu^\infty$ under $F_{\fmm{X}}$ yields a probability measure on $\real^{\mathbb{N} \times \mathbb{N}} \times B^\infty$ that we denote by
\begin{equation}
\fmm{D}_{\fmm{X}}:= (F_{\fmm{X}})_\ast (\mu^\infty) \,,
\end{equation}
and refer to as the {\em field curvature distribution} of $\fmm{X}$. This terminology is motivated by the notion of {\em curvature set} of a metric space \cite{gromov2007metric}. A similar construction for finite sequences $\{x_i\}$, $1 \leq i \leq n$, gives a measure $\fmm{D}^n_{\fmm{X}}$ on $\real^{n \times n} \times B^n$.

Our main result supporting empirical estimation of the Gromov-Wasserstein distance between $mm$-fields is the following convergence theorem.
\begin{reptheorem}{thm: dWp(Dx,Dy)=dGW(infty)}
Let $\calX$ and $\calY$ be bounded $mm$-fields over $B$. Then, for any $1 \leq p \leq \infty$, we have
\begin{equation*}
    \lim_{n \to \infty} \dwp(\calDXn,\calDYn) = \dwp(\mathcal{D}_\mathcal{X},\mathcal{D}_\mathcal{Y}) = \dgwi(\mathcal{X},\mathcal{Y}) .
\end{equation*}
\end{reptheorem}

A metric-measure field counterpart to Gromov's Reconstruction Theorem for $mm$-spaces \cite{gromov2007metric} is a special case of this more general result. The $mm$-field reconstruction theorem is stated and proven in Theorem \ref{thm:reconstruction}.

\subsection{Organization}

Section \ref{sec:gh} introduces the Gromov-Hausdorff distance between compact fields and provides a characterization of $d_{GH}$ in terms of distortions of (functional) correspondences, whereas Section \ref{sec:urysohn} shows that $d_{GH}$ can be realized as a Hausdorff distance through isometric embeddings in a Urysohn universal field. The $d_{GP}$ and $d_{GW}$ distances for metric-measure fields are studied in Sections \ref{sec:gp} and \ref{sec:gw}, respectively. Section \ref{sec:dmatrix} introduces a representation of $mm$-fields by distributions of infinite augmented distance matrices, proves a field reconstruction theorem based on these distributions and also addresses empirical approximation questions. Section \ref{sec:filtrations} introduces functional analogues of the Vietoris-Rips and neighborhood filtrations and proves their stability. Section \ref{sec:summary} closes the paper with a summary and some discussion.


\section{Gromov-Hausdorff Distance for Metric Fields}\label{sec:gh}

A  $B$-field is a 1-Lipschitz map $\pi \colon X \to B$, where $X$ and $B$ are Polish metric spaces. We sometimes denote the field as a triple $\mm{X}=(X, d_X, \pi)$.

\begin{definition} 
Let $\mm{X}=(X,d_X,\pi_X)$ and $\mm{Y}=(Y,d_Y,\pi_Y)$ be $B$-fields. 
\begin{enumerate}[(i)]
\item A mapping from $\mm{X}$ to $\mm{Y}$ over $B$, denoted $\Phi \colon \mm{X} \dto \mm{Y}$,  consists of a 1-Lipschitz mapping $\phi \colon X \to Y$ such that the diagram
\begin{equation}
\begin{tikzcd}
X \arrow[rd, "\pi_X"'] \arrow[rr,  "\phi"] & & Y \arrow[ld, "\pi_Y"] \\
& B &
\end{tikzcd}
\end{equation}
commutes. We adopt the convention that uppercase letters denote maps between fields and the corresponding lowercase letters denote the associated mapping between their domains.
\item $\Phi \colon \mm{X} \dto \mm{Y}$ is an {\em isometric embedding} if $\phi: X\to Y$ is an isometric embedding of metric spaces.
\item $\Phi \colon \mm{X} \dto \mm{Y}$ is an {\em isometry} if $\phi: X \to Y$ is a bijective isometry. Moreover, we say that $\mm{X}$ and $\mm{Y}$ are {\em isometric}, denoted by $\mm{X} \simeq \mm{Y}$, if there is an isometry $\Phi$ between them. If $\mm{X} = \mm{Y}$, we refer to an isometry as an {\em automorphism} of $\mm{X}$.
\end{enumerate}
\end{definition}

\begin{definition} (Gromov-Hausdorff Distance for Fields) \label{def: functional GHB distance}
Let $\mm{X}=(X,d_X,\pi_X)$ and $\mm{Y}=(Y,d_Y,\pi_Y)$ be $B$-fields. The {\em Gromov-Hausdorff distance} is defined by
\begin{equation*}
    \dgh(\mm{X}, \mm{Y}) := \inf_{Z,\Phi,\Psi} d^Z_H(\phi(X),\psi(Y)),
\end{equation*}
where the infimum is taken over all $B$-fields $\mm{Z}$ and isometric embeddings $\Phi \colon \mm{X} \dto \mm{Z}$ and $\Psi \colon \mm{Y} \dto \mm{Z}$ over $B$. 
\end{definition}

\begin{remark}
The definition of $\dgh$ only involves the Hausdorff distance between the domains of the functions once embedded in $\mm{Z}$. This is due to the fact that, since fields are 1-Lipschitz, differences in function values are bounded above by distances in the domain. 
\end{remark}

Definition \ref{def: functional GHB distance} provides a field version of the Gromov-Hausdorff distance for metric spaces \cite{burago2001course}. In the remainder of this section, we study some of the basic properties of $\dgh$. Most arguments, through the proof of Theorem \ref{thm:gh_metric}, are similar to those in \cite{burago2001course} for metric spaces but are included here for completness.

The Gromov-Hausdorff distance is non-negative, symmetric and a field has zero distance to itself. To show that the Gromov-Hausdorff distance is a pseudo-metric over the class of fields, we need to show the triangle inequality. To this end, we first prove a gluing lemma for $B$-fields.

Let $\mm{Y},\mm{Z}_1,\mm{Z}_2$ be $B$-fields and $\Phi \colon \mm{Y} \hookrightarrow \mm{Z}_1$, $\Psi \colon \mm{Y} \hookrightarrow \mm{Z}_2$ isometric embeddings. As a set, the amalgamation of $Z_1$ and $Z_2$ along $Y$, denoted $Z=Z_1 \coprod_Y Z_2$, is the quotient of the disjoint union $Z_1 \coprod Z_2$ under the equivalence relation generated by the relations $z_1 \sim z_2$ if $z_1 \in Z_1$, $z_2 \in Z_2$, and there exists $y \in Y$ such that $\phi(y)=z_1$ and $\psi(y)=z_2$. Define $d_Z \colon Z \times Z \to [0,\infty)$ by $d_Z|_{Z_1 \times Z_1}:=d_{Z_1}$, $d_Z|_{Z_2 \times Z_2}:=d_{Z_2}$ and
\begin{equation}
d_Z(z_1,z_2):=\inf_{y \in Y} d_{Z_1}(z_1,\phi(y))+d_{Z_2}(\psi(y),z_2),
\end{equation}
for $z_1 \in Z_1$ and $z_2 \in Z_2$, and $\pi_Z \colon Z \to B$ by $\pi_Z|_{Z_1}:=\pi_{Z_1}$ and $\pi_Z|_{Z_2}:=\pi_{Z_2}$.

\begin{lemma}[Gluing Lemma]\label{lem:gluing_fields}
    If $\mm{Y},\mm{Z}_1,\mm{Z}_2$ are $B$-fields and $\Phi \colon \mm{Y} \hookrightarrow \mm{Z}_1$, $\Psi \colon \mm{Y} \hookrightarrow \mm{Z}_2$ are isometric embeddings, then $d_Z \colon Z \times Z \to [0,\infty)$ is a well-defined metric on $Z$ and $\pi_Z \colon Z \to B$ is 1-Lipschitz. Hence, $\mm{Z}=(Z,d_Z,\pi_Z)$ is a $B$-field, and $\mm{Z}_1,\mm{Z}_2$ are isometrically included in $\mm{Z}$.
\end{lemma}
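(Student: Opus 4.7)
The plan is to verify four items in order: (a) the formula for $d_Z$ is consistent under the equivalence relation defining $Z$, (b) $d_Z$ satisfies the metric axioms on $Z$, (c) $\pi_Z$ is well-defined and 1-Lipschitz, and (d) the canonical inclusions $\mm{Z}_i \hookrightarrow \mm{Z}$ are isometric embeddings of $B$-fields. Items (a), (c), and (d) are essentially bookkeeping once we exploit that $\Phi, \Psi$ are isometric embeddings satisfying $\pi_{Z_i}\circ\phi_i = \pi_Y$; the technical core is the triangle inequality together with positive-definiteness at identified points.

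For (a), the only non-trivial identifications are $\phi(y)\sim\psi(y)$. If $z_1=\phi(y_0)\in Z_1$ is identified with $\psi(y_0)\in Z_2$, I would show that the cross formula agrees with the intrinsic $Z_2$-distance for any $w\in Z_2$ via
\[
\inf_{y\in Y}\bigl[d_Y(y_0,y)+d_{Z_2}(\psi(y),w)\bigr] = d_{Z_2}(\psi(y_0),w),
\]
where ``$\le$'' follows from the triangle inequality in $Z_2$ applied to $\psi(y_0),\psi(y),w$ together with $d_Y(y_0,y)=d_{Z_2}(\psi(y_0),\psi(y))$, and ``$\ge$'' comes by taking $y=y_0$. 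The well-definedness of $\pi_Z$ is immediate from $\pi_{Z_1}\circ\phi = \pi_Y = \pi_{Z_2}\circ\psi$.

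The triangle inequality is the main obstacle, since it requires handling cross-distances through a nested pair of infima. A case analysis by how the three points distribute among $Z_1,Z_2$ reduces to two nontrivial subcases. First, for $z_1,z_1'\in Z_1$ and $w\in Z_2$, chaining the defining infima and using the triangle inequality in $Z_2$ on $\psi(y),w,\psi(y')$ together with $d_{Z_2}(\psi(y),\psi(y')) = d_Y(y,y') = d_{Z_1}(\phi(y),\phi(y'))$ yields
\[
d_Z(z_1,w)+d_Z(w,z_1') \ge \inf_{y,y'}\bigl[d_{Z_1}(z_1,\phi(y)) + d_{Z_1}(\phi(y),\phi(y')) + d_{Z_1}(\phi(y'),z_1')\bigr] \ge d_{Z_1}(z_1,z_1').
\]
Second, for $z_1,z_1'\in Z_1$ and $z_2\in Z_2$, pulling $d_{Z_1}(z_1,z_1')$ inside the infimum over $y$ and using the triangle inequality in $Z_1$ gives $d_Z(z_1,z_2) \le d_Z(z_1,z_1') + d_Z(z_1',z_2)$; the remaining cases follow by symmetry. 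For positive-definiteness on mixed pairs: if $z_1\in Z_1$, $z_2\in Z_2$ and the cross infimum is $0$, choose $y_n\in Y$ with both summands tending to $0$; since $\phi$ is an isometric embedding, $\{y_n\}$ is Cauchy in the Polish, hence complete, space $Y$ and converges to some $y^*$, and continuity of $\phi,\psi$ forces $z_1=\phi(y^*)$, $z_2=\psi(y^*)$, so $[z_1]=[z_2]$.

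Finally, $\pi_Z$ restricts to the 1-Lipschitz map $\pi_{Z_i}$ on each $Z_i$, and for $z_1\in Z_1$, $z_2\in Z_2$ and any $y\in Y$,
\[
d_B(\pi_{Z_1}(z_1),\pi_{Z_2}(z_2)) \le d_B(\pi_{Z_1}(z_1),\pi_{Z_1}(\phi(y))) + d_B(\pi_{Z_2}(\psi(y)),\pi_{Z_2}(z_2)) \le d_{Z_1}(z_1,\phi(y)) + d_{Z_2}(\psi(y),z_2),
\]
so taking the infimum over $y$ gives $d_B(\pi_Z(z_1),\pi_Z(z_2)) \le d_Z(z_1,z_2)$. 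The inclusions $Z_i\hookrightarrow Z$ are isometric by the definition of $d_Z$ on each factor and commute with projection to $B$, hence are isometric embeddings of $B$-fields, completing the verification.
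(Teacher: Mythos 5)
Your proposal is correct and takes essentially the same route as the paper's proof: the same verification that the cross formula is consistent at identified points, the same Cauchy-sequence-plus-completeness argument for definiteness on mixed pairs, the same two-case analysis for the triangle inequality, and the same estimate showing $\pi_Z$ is 1-Lipschitz. (One trivial label slip: in your well-definedness display the triangle inequality in $Z_2$ gives the ``$\ge$'' direction and taking $y=y_0$ gives the ``$\le$'' direction, not the other way around; both ingredients are present, so the equality stands.)
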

\begin{proof}
    To show that $d_Z$ is well defined, we verify that $d_Z(z_1,\psi(y))=d_Z(z_1,\phi(y))$ and $d_Z(z_2,\psi(y))=d_Z(z_2,\phi(y))$, for any $z_1 \in Z_1$, $z_2 \in Z_2$ and $y \in Y$. Indeed,
    \begin{equation}
        \begin{split}
            d_{Z}(z_1,\psi(y))&= \inf_{y' \in Y} d_{Z_1}(z_1,\phi(y'))+d_{Z_2}(\psi(y'),\psi(y)) = \inf_{y' \in Y} d_{Z_1}(z_1,\phi(y'))+d_Y(y',y) \\
                              &=\inf_{y' \in Y} d_{Z_1}(z_1,\phi(y'))+d_{Z_1}(\phi(y'),\phi(y)) = d_{Z_1}(z_1,\phi(y))=d_Z(z_1,\phi(y)).
        \end{split}
    \end{equation}
    Similarly, $d_Z(z_2,\psi(y))=d_Z(z_2,\phi(y))$. Thus, $d_Z$ is well defined. To show that $d_Z$ is a metric, we show that definiteness and the triangle inequality hold, since other properties of a metric follow easily from the definition. Assume that $z_1 \in Z_1$, $z_2 \in Z_2$ and $d_Z(z_1,z_2)=0$. For each integer $n>0$, there exists $y_n \in Y$ such that $d_{Z_1}(z_1,\phi(y_n)) \leq 1/n$, $d_{Z_2}(z_2,\psi(y_n)) \leq 1/n$. This shows that $(y_n)$ is a Cauchy sequence in $Y$. By completeness, it has a limit $y \in Y$. Then, $z_1=\phi(y)$ and $z_2=\psi(y)$, implying that $z_1$ is equal to $z_2$ in $Z$. This shows definiteness.

    For the triangle inequality, let $z_1,z_1' \in Z_1$, and $z_2 \in Z_2$. We have
    \begin{equation}
        \begin{split}
            d_Z(z_1,z_1')+d_Z(z_1',z_2) &= d_{Z_1}(z_1,z_1')+\inf_{y \in Y} d_{Z_1}(z_1',\phi(y))+d_{Z_2}(\psi(y),z_2) \\
                                        &\geq \inf_{y \in Y} d_{Z_1}(z_1,\phi(y))+d_{Z_2}(\psi(y),z_2)=d_Z(z_1,z_2)
        \end{split} 
    \end{equation}
   and
    \begin{equation}
        \begin{split}
            d_Z(z_1,z_2)+d_Z(z_2,z_1') &= \inf_{y,y' \in Y} d_{Z_1}(z_1,\phi(y))+d_{Z_1}(z_1',\phi(y'))+d_{Z_2}(z_2,\psi(y))+d_{Z_2}(z_2,\psi(y')) \\ 
            &\geq \inf_{y,y' \in Y} d_{Z_1}(z_1,\phi(y))+d_{Z_1}(z_1',\phi(y'))+d_{Z_2}(\psi(y),\psi(y')) \\
            &=\inf_{y,y' \in Y}d_{Z_1}(z_1,\phi(y))+d_{Z_1}(z_1',\phi(y'))+d_{Z_1}(\phi(y),\phi(y')) \\
            &\geq \ d_{Z_1}(z_1,z_1')=d_Z(z_1,z_1').
        \end{split}
    \end{equation}
    Similarly, for $z_1 \in Z_1$ and $z_2,z_2' \in Z_2$, we have
    \begin{equation}
    d_Z(z_1,z_2)+d_Z(z_2,z_2') \geq d_Z(z_1,z_2'),\, d_Z(z_2,z_1)+d_Z(z_1,z_2') \geq d_Z(z_2,z_2').
    \end{equation}
    Thus, the triangle inequality holds and $d_Z$ is a metric on $Z$, as claimed.

    The map $\pi_Z$ is well defined because if $z_1=\phi(y)$ and $z_2=\psi(y)$, then $\pi_{Z_1}(z_1)=\pi_{Z_2}(z_2)=\pi_Y(y)$. Let us show that $\pi_Z$ is $1$-Lipschitz. If  $z_1 \in Z_1$ and $z_2 \in Z_2$, then
    \begin{equation}
        \begin{split}
            d_Z(z_1,z_2)&=\inf_{y \in Y} d_{Z_1}(z_1,\phi(y))+d_{Z_2}(z_2,\psi(y)) \\
                        &\geq \inf_{y \in Y} d_B(\pi_{Z_1}(z_1),\pi_{Z_1}(\phi(y)))+d_B(\pi_{Z_2}(z_2),\pi_{Z_2}(\psi(y))) \\
                        &=\inf_{y \in Y} d_B(\pi_{Z_1}(z_1),\pi_Y(y))+d_B(\pi_{Z_2}(z_2),\pi_Y(y)) \\
                        &\geq d_B(\pi_{Z_1}(z_1),\pi_{Z_2}(z_2))=d_B(\pi_Z(z_1),\pi_Z(z_2)),
        \end{split}
    \end{equation}
    as desired. This completes the proof.
\end{proof}

\begin{proposition}\label{prop:gh_triangle}
    Let $\mm{X},\mm{Y},\mm{W}$ be $B$-fields. Then, $\dgh(\mm{X},\mm{W}) \le \dgh(\mm{X},\mm{Y})+\dgh(\mm{Y},\mm{W})$.
\end{proposition}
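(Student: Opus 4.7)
The plan is to mimic the classical proof of the triangle inequality for the Gromov-Hausdorff distance on metric spaces, using Lemma \ref{lem:gluing_fields} in place of the usual metric gluing. Fix $\epsilon>0$. By Definition \ref{def: functional GHB distance}, I can choose a $B$-field $\mm{Z}_1$ and isometric embeddings $\Phi_1 \colon \mm{X} \hookrightarrow \mm{Z}_1$, $\Psi_1 \colon \mm{Y} \hookrightarrow \mm{Z}_1$ with $d_H^{Z_1}(\phi_1(X),\psi_1(Y)) < \dgh(\mm{X},\mm{Y})+\epsilon$, and a $B$-field $\mm{Z}_2$ with isometric embeddings $\Phi_2 \colon \mm{Y} \hookrightarrow \mm{Z}_2$, $\Psi_2 \colon \mm{W} \hookrightarrow \mm{Z}_2$ with $d_H^{Z_2}(\phi_2(Y),\psi_2(W)) < \dgh(\mm{Y},\mm{W})+\epsilon$.

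Next I would form the amalgamation $\mm{Z}:=\mm{Z}_1 \coprod_{\mm{Y}} \mm{Z}_2$ along $\mm{Y}$ using the embeddings $\Phi_2$ and $\Psi_1$. Lemma \ref{lem:gluing_fields} guarantees that $\mm{Z}$ is a $B$-field and that $\mm{Z}_1,\mm{Z}_2$ embed isometrically into $\mm{Z}$; composing, I obtain isometric embeddings of $\mm{X}$, $\mm{Y}$ and $\mm{W}$ into a common $B$-field $\mm{Z}$, in which the images of $\mm{Y}$ coming from $\mm{Z}_1$ and $\mm{Z}_2$ coincide. The metric on $\mm{Z}$ restricts to $d_{Z_1}$ on $Z_1$ and to $d_{Z_2}$ on $Z_2$, so the Hausdorff distance bounds established in $\mm{Z}_1$ and $\mm{Z}_2$ are preserved inside $\mm{Z}$.

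Then I apply the triangle inequality for the Hausdorff distance inside $\mm{Z}$ to the images of $X$, $Y$, $W$, obtaining
\begin{equation*}
d_H^Z(X,W) \le d_H^Z(X,Y) + d_H^Z(Y,W) < \dgh(\mm{X},\mm{Y}) + \dgh(\mm{Y},\mm{W}) + 2\epsilon.
\end{equation*}
Since $\mm{Z}$ is a valid witness for $\dgh(\mm{X},\mm{W})$, this gives $\dgh(\mm{X},\mm{W}) < \dgh(\mm{X},\mm{Y}) + \dgh(\mm{Y},\mm{W}) + 2\epsilon$, and letting $\epsilon \to 0$ yields the claim.

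The only real subtlety, and the step I would be most careful about, is making sure the gluing lemma is applied with the correct pair of embeddings of $\mm{Y}$ (namely $\Psi_1$ into $\mm{Z}_1$ and $\Phi_2$ into $\mm{Z}_2$), so that the two copies of $\mm{Y}$ are identified in $\mm{Z}$; once this identification is in place, the Hausdorff triangle inequality inside $\mm{Z}$ gives the result immediately. All other steps are routine.
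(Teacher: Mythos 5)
Your proposal is correct and follows essentially the same route as the paper's proof: choose near-optimal ambient fields $\mm{Z}_1$, $\mm{Z}_2$, amalgamate them along $\mm{Y}$ via Lemma \ref{lem:gluing_fields}, and apply the Hausdorff triangle inequality in the glued field. The subtlety you flag (identifying the two copies of $\mm{Y}$ so that the intermediate term vanishes) is exactly the point the paper's argument makes explicit when it bounds $d_Z(\lambda_1(x),\lambda_2(w))$ through $\phi(y)=\psi(y)$.
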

\begin{proof}
    Let $r>\dgh(\mm{X},\mm{Y})$ and $s>\dgh(\mm{Y},\mm{W})$. There exists $B$-fields $\mm{Z}_1$ and $\mm{Z}_2$, and isometric embeddings $\Lambda_1 \colon \mm{X} \hookrightarrow \mm{Z}_1$, $\Phi \colon \mm{Y} \hookrightarrow \mm{Z}_1$, $\Psi \colon \mm{Y} \hookrightarrow \mm{Z}_2$, and $\Lambda_2 \colon \mm{W} \hookrightarrow \mm{Z}_2$ such that $\dha^{Z_1}(\lambda_1 (X),\phi(Y)) < r$ and $\dha^{Z_2}(\psi(Y),\lambda_2 (W)) < s$. Let $\mm{Z}$ be the $B$-field described in Lemma \ref{lem:gluing_fields}. We show that $\dha^Z(\lambda_1 (X),\lambda_2(W))<r+s$. Given $x \in X$, there exist $y \in Y$ and $w \in W$ such that $d_{Z_1}(\lambda_1(x),\phi(y))<r$ and  $d_{Z_2}(\psi(y),\lambda_2 (w)) < s$. Then,
    \begin{equation}
    d_Z(\lambda_1 (x),\lambda_2(w)) \leq d_Z(\lambda_1(x),\phi(y))+d_Z(\phi(y),\psi(y))+d_Z(\psi(y),\lambda_2(w))<r+s.
    \end{equation}
    Similarly, for any $w$ in $W$, there exists $x$ in $X$ such that $d_Z(\lambda_1 (x),\lambda_2 (w))<r+s$. Hence,
    \begin{equation}
    \dgh(\mm{X},\mm{Y}) \leq \dha^Z(\lambda_1(X),\lambda_2(W))<r+s.
    \end{equation}
    Since $r>\dgh(\mm{X},\mm{Y})$ and $s>\dgh(\mm{Y},\mm{W})$ are arbitrary, we obtain $\dgh(\mm{X},\mm{W}) \leq \dgh(\mm{X},\mm{Y})+\dgh(\mm{Y},\mm{W})$ as desired.
\end{proof}

In analogy with the corresponding result for compact metric spaces, next, we obtain a characterization of the Gromov-Hausdorff distance for $B$-fields in terms of correspondences. Recall that a correspondence between two sets $X$ and $Y$ is a relation $R \subseteq X \times Y$ such that the projection to each factor restricted to $R$ is surjective.

\begin{definition}[Metric Field Distortion]
    Let $\mm{X}=(X,d_X,\pi_X)$ and $\mm{Y}=(Y,d_Y,\pi_Y)$ be  $B$-fields and $R$ be a relation between $X$ and $Y$. The \emph{metric field distortion} of $R$, denoted $\dis_{\pi_X,\pi_Y}(R)$, is defined as
    $$\dis_{\pi_X,\pi_Y}(R):=\max\big(\sup_{(x,y),(x',y') \in R} |d_X(x,x')-d_Y(y,y')|, 2\sup_{(x,y) \in R}d_B(\pi_X(x),\pi_Y(y)) \big).$$
\end{definition}

The following construction introduces a 1-parameter family of $B$-fields associated with a relation and its distortion. 

\begin{definition}\label{def:connecting_fields}
    Let $\mm{X}$ and $\mm{Y}$ be  $B$-fields, $R$ a relation between $X$ and $Y$, and $r>0$ satisfying $r \geq \dis_{\pi_X,\pi_Y}(R)/2$. Let $Z$ be the disjoint union of $X$ and $Y$, and $d_Z: Z \times Z \to \R$ be given by $d_Z|_{X \times X}=d_X$, $d_Z|_{Y \times Y}=d_Y$ and
    \[
    d_Z(x,y)=d_Z(y,x)=r+\inf_{(x',y') \in R} d_X(x,x')+d_Y(y,y'),
    \]
    for $x \in X$ and $y \in Y$. Letting $\pi_Z: Z \to B$ be given by $\pi_Z|_X=\pi_X$ and $\pi_Z|_Y=\pi_Y$, we define the $B$-field $\mm{X} \coprod_{R,r} \mm{Y}$ by $\mm{X} \coprod_{R,r} \mm{Y}:= (Z,d_Z,\pi_Z)$ (see next lemma).
\end{definition}

\begin{lemma}\label{lem:connect_by_relation}
    Let $\mm{X}$ and $\mm{Y}$ be  $B$-fields, $R$ a relation between $X$ and $Y$, and $r \geq \dis_{\pi_X,\pi_Y}(R)/2$. Then, $\mm{X} \coprod_{R,r} \mm{Y}$ is a $B$-field.
\end{lemma}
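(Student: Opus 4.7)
The plan is a routine but careful verification that $d_Z$ is a metric and $\pi_Z$ is $1$-Lipschitz. Symmetry and non-negativity of $d_Z$ are built into the definition, and the restrictions to $X \times X$ and $Y \times Y$ coincide with the metrics $d_X$ and $d_Y$, so definiteness and the triangle inequality for triples entirely inside $X$ or entirely inside $Y$ are immediate. Definiteness for mixed pairs $x \in X$, $y \in Y$ follows because $d_Z(x,y) \ge r > 0$, and the assumption $r > 0$ in Definition \ref{def:connecting_fields} is precisely what prevents collapsing.

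The substantive content is the triangle inequality for mixed triples. Consider first $x_1, x_2 \in X$ and $y \in Y$, and aim for $d_X(x_1,x_2) \le d_Z(x_1,y) + d_Z(y,x_2)$. For any $(x',y'),(x'',y'') \in R$ (chosen independently) the triangle inequality in $X$ gives $d_X(x_1,x_2) \le d_X(x_1,x') + d_X(x',x'') + d_X(x'',x_2)$. The metric distortion bound yields $d_X(x',x'') \le d_Y(y',y'') + \dis_{\pi_X,\pi_Y}(R) \le d_Y(y',y) + d_Y(y,y'') + 2r$, using $\dis_{\pi_X,\pi_Y}(R) \le 2r$. Substituting and taking the infimum separately over $(x',y')$ and $(x'',y'')$ gives the inequality. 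The other inequality $d_Z(x_1,y) \le d_X(x_1,x_2) + d_Z(x_2,y)$ is easier: for any $(x'',y'') \in R$, $d_X(x_1,x'') + d_Y(y,y'') \le d_X(x_1,x_2) + d_X(x_2,x'') + d_Y(y,y'')$, and taking the infimum and adding $r$ to both sides finishes it. The symmetric case of two points in $Y$ and one in $X$ is identical after swapping roles.

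For $\pi_Z$, the restrictions to $X$ and $Y$ are $1$-Lipschitz by hypothesis, so only the mixed case $x \in X$, $y \in Y$ requires work. For any $(x',y') \in R$, the triangle inequality in $B$ and the two defining bounds on $\dis_{\pi_X,\pi_Y}(R)$ give
\begin{equation*}
d_B(\pi_X(x),\pi_Y(y)) \le d_B(\pi_X(x),\pi_X(x')) + d_B(\pi_X(x'),\pi_Y(y')) + d_B(\pi_Y(y'),\pi_Y(y)) \le d_X(x,x') + r + d_Y(y,y'),
\end{equation*}
where the middle term is bounded using $2\,d_B(\pi_X(x'),\pi_Y(y')) \le \dis_{\pi_X,\pi_Y}(R) \le 2r$, and the outer two use that $\pi_X,\pi_Y$ are $1$-Lipschitz. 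Taking the infimum over $(x',y') \in R$ gives $d_B(\pi_Z(x),\pi_Z(y)) \le d_Z(x,y)$.

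The only point that requires a bit of attention, rather than mechanical unpacking, is managing the two independent infima inside the triangle inequality for mixed triples and verifying that the single constant $r$ simultaneously absorbs both halves of the distortion $\dis_{\pi_X,\pi_Y}(R)$, namely the metric part (needed for the triangle inequality of $d_Z$) and the functional part (needed for the Lipschitz estimate of $\pi_Z$). The hypothesis $r \ge \dis_{\pi_X,\pi_Y}(R)/2$ is precisely calibrated to handle both.
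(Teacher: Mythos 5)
Your verification of the metric axioms for $d_Z$ and the $1$-Lipschitz estimate for $\pi_Z$ is correct and follows essentially the same route as the paper: the mixed-triple triangle inequality via two independent infima over $R$ together with the bound $d_X(x',x'') \leq d_Y(y',y'') + 2r$, and the Lipschitz bound for $\pi_Z$ via $d_B(\pi_X(x'),\pi_Y(y')) \leq \dis_{\pi_X,\pi_Y}(R)/2 \leq r$. The one thing you omit is that a $B$-field requires its domain to be a \emph{Polish} metric space, so you still need to check that $Z$ is complete and separable; the paper does this by observing that any Cauchy sequence in $Z$ has a subsequence lying entirely in $X$ or entirely in $Y$ (each complete), and that the union of countable dense subsets of $X$ and $Y$ is dense in $Z$. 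This is routine, but it is part of the claim being proved and should be stated.
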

\begin{proof}
    Let $(Z,d_Z,\pi_Z):=\mm{X} \coprod_{R,r} \mm{Y}$. Definiteness and symmetry of $d_Z$ are clear from the definition. We verify the triangle inequality. Let $x_1,x_2 \in X$, and $y_1,y_2 \in Y$. We have
    \begin{equation}
        \begin{split}
            d_Z(x_1,x_2)+d_Z(x_2,y_1)&=d_X(x_1,x_2)+r+\inf_{(x',y') \in R} d_X(x_2,x')+d_Y(y',y_1) \\
            &\geq r+\inf_{(x',y') \in R} d_X(x_1,x')+d_Y(y',y_1) =d_Z(x_1,y_1).
        \end{split}
    \end{equation}
    Similarly, $d_Z(x_1,y_1)+d_Z(y_1,y_2) \geq d_Z(x_1,y_2)$. We also have
    \begin{equation}
        \begin{split}
            d_Z(x_1,y_1)+d_Z(y_1,x_2)&=2r+\inf_{(x',y'),(x'',y'') \in R} d_X(x_1,x')+d_X(x_2,x'')+d_Y(y_1,y')+d_Y(y_1,y'') \\
            &\geq 2r+\inf_{(x',y'),(x'',y'') \in R} d_X(x_1,x')+d_X(x_2,x'')+d_Y(y',y'') \\
            &\geq \inf_{(x',y'),(x'',y'') \in R} d_X(x_1,x')+d_X(x_2,x'')+d_X(x',x'') \\
            &\geq d_X(x_1,x_2)=d_Z(x_1,x_2).
        \end{split}
    \end{equation}
    Similarly, $d_Z(y_1,x_1)+d_Z(x_1,y_2) \geq d_Z(y_1,y_2)$. Hence, $d_Z$ is a metric on $Z$. Moreover, $Z$ is complete because any sequence in $Z$ has a subsequence contained in either $X$ or $Y$, which are complete. Since the union of countable dense sets in $X$ and $Y$ is dense in $Z$, it follows that $Z$ is Polish. Lastly, we show that $\pi_Z$ is $1$-Lipschitz. For $x \in X$, $y \in Y$, we have
    \begin{equation}
        \begin{split}
            d_B(\pi_X(x),\pi_Y(y)) &\leq \inf_{(x',y') \in R} d_B(\pi_X(x),\pi_X(x'))+d_B(\pi_X(x'),\pi_Y(y'))+d_B(\pi_Y(y),\pi_Y(y'))\\
            &\leq r+\inf_{(x',y') \in R} d_X(x,x')+d_Y(y,y')=d_Z(x,y).
        \end{split}
    \end{equation}
This completes the proof.
\end{proof}

\begin{lemma}\label{lem:connect_hausdorff}
    Let $\mm{X}$ and $\mm{Y}$ be $B$-fields, $R$ be a correspondence between $X$ and $Y$,  $r>0$ and $r \geq \dis_{\pi_X,\pi_Y}(R)/2$. If $\mm{Z}:=\mm{X} \coprod_{R,r} \mm{Y}$, then $\dha^Z(X,Y)=r$.
\end{lemma}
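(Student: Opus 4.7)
The plan is to verify the two inequalities $d_H^Z(X,Y) \le r$ and $d_H^Z(X,Y) \ge r$ separately, using only the explicit formula for $d_Z$ from Definition~\ref{def:connecting_fields} and the surjectivity properties of the correspondence $R$. Because $R$ is a correspondence (not merely a relation), every point of $X$ is matched to at least one point of $Y$ and vice versa, which is what drives the upper bound; the lower bound will be essentially automatic from the additive $+r$ in the definition of $d_Z$.

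For the upper bound, I would argue as follows. Fix $x \in X$. Since $R$ is a correspondence, there exists $y \in Y$ with $(x,y) \in R$. Taking $(x',y') = (x,y)$ in the infimum defining $d_Z(x,y)$ gives
\[
d_Z(x,y) \le r + d_X(x,x) + d_Y(y,y) = r,
\]
so $\inf_{y' \in Y} d_Z(x,y') \le r$. The symmetric argument, using surjectivity of $R$ onto $Y$, shows $\inf_{x' \in X} d_Z(y,x') \le r$ for every $y \in Y$. Taking suprema yields $d_H^Z(X,Y) \le r$.

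For the lower bound, observe that by the very definition of $d_Z$ on the "cross" part $X \times Y$, we have
\[
d_Z(x,y) = r + \inf_{(x',y') \in R}\bigl(d_X(x,x') + d_Y(y,y')\bigr) \ge r
\]
for every $x \in X$ and $y \in Y$, since the infimum of nonnegative quantities is nonnegative. Therefore, for any $x \in X$, $\inf_{y \in Y} d_Z(x,y) \ge r$, and taking the supremum over $x$ (which is nonempty as long as $X$ is; the degenerate empty case is ruled out by $R$ being a correspondence) gives $d_H^Z(X,Y) \ge r$. Combining the two bounds yields the claimed equality.

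There is no real obstacle here: the construction of $\mm{X} \coprod_{R,r} \mm{Y}$ was tailored precisely so that cross-distances are bounded below by $r$, with equality attainable along matched pairs $(x,y) \in R$. The only subtlety to mention is that using a correspondence (rather than an arbitrary relation) is what guarantees both suprema in the definition of the Hausdorff distance are controlled, and that the formula $d_Z(x,y) \ge r$ already uses the hypothesis $r > 0$ only implicitly—the bound $d_Z(x,y) \ge r$ holds as soon as $d_Z$ is defined by the stated formula.
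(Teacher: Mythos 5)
Your proof is correct and follows essentially the same route as the paper's: the lower bound comes from the additive $+r$ in the cross-distance formula, and the upper bound from picking, for each point, a matched partner under the correspondence $R$, for which $d_Z(x,y)=r$. The only (harmless) addition is your explicit remark about the degenerate empty case.
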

\begin{proof}
    For $x \in X$, $y \in Y$, we have  $d_Z(x,y) \geq r$, hence $\dha^Z(X,Y) \geq r$. Since $R$ is a correspondence, for any $x \in X$, there exists $y \in Y$ such that $(x,y) \in R$, so $d_Z(x,y)=r$. Similarly, for any $y \in Y$, there exists $x \in X$ such that $(x,y) \in R$, so $d_Z(x,y)=r$. Hence, we get $\dha^Z(X,Y) \leq r$, implying $\dha^Z(X,Y)=r$.   
\end{proof}

\begin{theorem}\label{thm:gh_distortion}
    If $\mm{X}=(X,d_X,\pi_X)$ and $\mm{Y}=(Y,d_Y,\pi_Y)$ are $B$-fields, then
    $$\dgh(\mm{X},\mm{Y})=\inf_R \dis_{\piX,\piY}(R)/2, $$
    where the infimum is taken over all correspondences between $X$ and $Y$.
\end{theorem}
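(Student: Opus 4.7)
The plan is to prove both inequalities separately, following the standard template used for the metric Gromov--Hausdorff distance but being careful to track the functional term $d_B(\pi_X(x),\pi_Y(y))$ in addition to the usual metric distortion.

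\textbf{Inequality $\dgh(\mm{X},\mm{Y}) \leq \inf_R \dis_{\pi_X,\pi_Y}(R)/2$.} I would use the construction already set up in Definition \ref{def:connecting_fields} and Lemmas \ref{lem:connect_by_relation}--\ref{lem:connect_hausdorff}. Given any correspondence $R$ between $X$ and $Y$ with distortion $\dis_{\pi_X,\pi_Y}(R)=2s$, set $r = \max(s, \varepsilon)$ for an arbitrary $\varepsilon > 0$. Then $r \geq \dis_{\pi_X,\pi_Y}(R)/2$ and $r > 0$, so Lemma \ref{lem:connect_by_relation} gives that $\mm{Z} := \mm{X} \coprod_{R,r} \mm{Y}$ is a $B$-field containing isometric copies of $\mm{X}$ and $\mm{Y}$, and Lemma \ref{lem:connect_hausdorff} gives $\dha^Z(X,Y) = r$. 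Hence $\dgh(\mm{X},\mm{Y}) \leq \max(\dis_{\pi_X,\pi_Y}(R)/2, \varepsilon)$; letting $\varepsilon \to 0$ and then taking the infimum over $R$ yields the desired inequality.

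\textbf{Inequality $\dgh(\mm{X},\mm{Y}) \geq \inf_R \dis_{\pi_X,\pi_Y}(R)/2$.} Fix any $r > \dgh(\mm{X},\mm{Y})$ and pick a $B$-field $\mm{Z}$ together with isometric embeddings $\Phi \colon \mm{X} \hookrightarrow \mm{Z}$ and $\Psi \colon \mm{Y} \hookrightarrow \mm{Z}$ such that $\dha^Z(\phi(X),\psi(Y)) < r$. Define
\[
R := \{ (x,y) \in X \times Y : d_Z(\phi(x),\psi(y)) \leq r \}.
\]
Since the Hausdorff distance is strictly less than $r$, every $x \in X$ admits some $y \in Y$ with $(x,y) \in R$ and vice versa, so $R$ is a correspondence. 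For $(x,y),(x',y') \in R$, the triangle inequality in $Z$ together with the fact that $\phi$ and $\psi$ are isometric embeddings gives
\[
|d_X(x,x') - d_Y(y,y')| = |d_Z(\phi(x),\phi(x')) - d_Z(\psi(y),\psi(y'))| \leq d_Z(\phi(x),\psi(y)) + d_Z(\phi(x'),\psi(y')) \leq 2r.
\]
For $(x,y) \in R$, the commutativity of the embedding diagrams with $\pi_Z$ combined with the $1$-Lipschitz property of $\pi_Z$ gives
\[
d_B(\pi_X(x),\pi_Y(y)) = d_B(\pi_Z(\phi(x)),\pi_Z(\psi(y))) \leq d_Z(\phi(x),\psi(y)) \leq r.
\]
Therefore $\dis_{\pi_X,\pi_Y}(R) \leq 2r$, so $\inf_R \dis_{\pi_X,\pi_Y}(R)/2 \leq r$. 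Letting $r \searrow \dgh(\mm{X},\mm{Y})$ finishes the argument.

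The main subtlety, which is genuinely minor, is just the bookkeeping to handle the edge case where $\dgh(\mm{X},\mm{Y})=0$ or $\dis_{\pi_X,\pi_Y}(R)=0$ (where Lemma \ref{lem:connect_hausdorff} requires $r>0$), and the extra functional term in the distortion that does not appear in the classical metric Gromov--Hausdorff proof; both are absorbed by the $\varepsilon$-perturbation and by the $1$-Lipschitz property of $\pi_Z$, respectively.
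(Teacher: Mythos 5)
Your proposal is correct and follows essentially the same route as the paper: the lower bound via the correspondence $R=\{(x,y): d_Z(\phi(x),\psi(y))\leq r\}$ induced by a near-optimal embedding, and the upper bound via the glued field $\mm{X}\coprod_{R,r}\mm{Y}$ of Definition \ref{def:connecting_fields} together with Lemma \ref{lem:connect_hausdorff}. The only (harmless) differences are cosmetic: you use a non-strict inequality in defining $R$ and an explicit $\varepsilon$-perturbation $r=\max(s,\varepsilon)$ where the paper simply takes $r$ strictly greater than $\dis_{\pi_X,\pi_Y}(R)/2$.
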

\begin{proof}
    Let $r>\dgh(\mm{X},\mm{Y})$. There is a $B$-fields $\mm{Z}$ and isometric embeddings $\Phi \colon \mm{X} \dto \mm{Z}$ and $\Psi\colon \mm{Y} \dto \mm{Z}$ over $B$ such that $\dha(\phi(X),\psi(Y)) < r$. For $x \in X$ and $y \in Y$, abusing notation, we write $d_Z(x,y)$ to denote $d_Z(\phi(x), \psi(y))$. We also write $\pi_Z(x)$ and $\pi_Z(y)$ for $\pi_Z(\phi(x))$ and $\pi_Z(\psi(y))$, respectively. Let $R$ be the correspondence between $X$ and $Y$ given by
    \begin{equation}
    R:=\{(x,y) \in X \times Y: d_Z(x,y)<r \}.
    \end{equation}
    We have
    \begin{equation}
        |d_X(x,x')-d_Y(y,y')| \leq d_Z(x,y)+d_Z(x',y') \leq 2r
        \end{equation}
        and
    \begin{equation}
        d_B(\pi_X(x),\pi_Y(y)) = d_B(\pi_Z(x),\pi_Z(y))\leq d_Z(x,y) \leq r.
    \end{equation}
    Hence, $\dis_{\pi_X,\pi_Y}(R) < 2r.$ Since $r>\dgh(\mm{X},\mm{Y})$ is arbitrary, we get
    \begin{equation}
    \dgh(\mm{X},\mm{Y}) \geq \inf_R \dis_{\piX,\piY}(R)/2.
    \end{equation}
    For the converse inequality, let $R$ be a correspondence between $X$ and $Y$ and $r \geq \dis_{\pi_X,\pi_Y}(R)/2$. By Lemma \ref{lem:connect_hausdorff}, there exists a $B$-field
    $\mm{Z}$ containing isometric copies of $\mm{X},\mm{Y}$ such that $\dha^Z(X,Y) = r$. Hence, $\dgh(\mm{X},\mm{Y}) \leq r$. Since the correspondence $R$ and $r>\dis_{\pi_X,\pi_Y}(R)/2$ are arbitrary, we have
    \begin{equation}
    \dgh(\mm{X},\mm{Y}) \leq \inf_R \dis_{\piX,\piY}(R),
    \end{equation}
    as claimed.
\end{proof}

Our next goal is to establish the existence of an optimal correspondence that realizes the Gromov-Hausdorff distance between compact fields.
\begin{proposition}\label{prop:gh_realization}
    If $\mm{X}, \mm{Y}$ are compact $B$-fields, then there exists a correspondence $R$ between $X$ and $Y$ such that $$\dgh(\mm{X},\mm{Y})=\dis_{\pi_X,\pi_Y}(R)/2\,.$$
\end{proposition}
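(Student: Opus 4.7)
The plan is to combine Theorem \ref{thm:gh_distortion} with a compactness argument in the hyperspace of closed subsets of $X \times Y$. By Theorem \ref{thm:gh_distortion}, choose a sequence of correspondences $R_n \subseteq X \times Y$ with $\dis_{\pi_X,\pi_Y}(R_n)/2 \to \dgh(\mm{X},\mm{Y})$. Replacing each $R_n$ by its closure $\overline{R_n}$ does not change its distortion (since $d_X$, $d_Y$, $d_B$, $\pi_X$, $\pi_Y$ are continuous) and $\overline{R_n}$ is still a correspondence (closures of surjective images are surjective, and the coordinate projections are continuous and closed on the compact set $X \times Y$), so we may assume each $R_n$ is closed.

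Since $X \times Y$ is compact, by the Blaschke selection theorem the hyperspace of nonempty closed subsets of $X \times Y$ is compact under the Hausdorff distance. Hence, passing to a subsequence, I would assume $R_n \to R$ in Hausdorff distance for some nonempty closed set $R \subseteq X \times Y$. The first claim to check is that $R$ is a correspondence. Given any $x \in X$, for each $n$ pick $y_n \in Y$ with $(x,y_n) \in R_n$; by compactness of $Y$, a subsequence $y_{n_k} \to y$, and since $(x,y_{n_k}) \to (x,y)$ with $(x,y_{n_k}) \in R_{n_k}$ and $R_{n_k} \to R$ in Hausdorff distance, $(x,y) \in R$. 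The argument for projection onto $Y$ is symmetric.

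Next I would show $\dis_{\pi_X,\pi_Y}(R) \le 2\,\dgh(\mm{X},\mm{Y})$. Given $(x,y),(x',y') \in R$, Hausdorff convergence supplies sequences $(x_n,y_n),(x'_n,y'_n) \in R_n$ with $(x_n,y_n) \to (x,y)$ and $(x'_n,y'_n) \to (x',y')$. Then
\begin{equation*}
|d_X(x_n,x'_n) - d_Y(y_n,y'_n)| \le \dis_{\pi_X,\pi_Y}(R_n), \qquad 2 d_B(\pi_X(x_n),\pi_Y(y_n)) \le \dis_{\pi_X,\pi_Y}(R_n),
\end{equation*}
and passing to the limit via continuity of $d_X, d_Y, d_B, \pi_X, \pi_Y$ gives
\begin{equation*}
|d_X(x,x') - d_Y(y,y')| \le 2\,\dgh(\mm{X},\mm{Y}), \qquad 2 d_B(\pi_X(x),\pi_Y(y)) \le 2\,\dgh(\mm{X},\mm{Y}).
\end{equation*}
Taking suprema over $(x,y),(x',y') \in R$ yields $\dis_{\pi_X,\pi_Y}(R)/2 \le \dgh(\mm{X},\mm{Y})$. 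The reverse inequality is immediate from Theorem \ref{thm:gh_distortion}, so $R$ is optimal.

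The main obstacles are minor but worth being careful about: ensuring that taking closures does not destroy the correspondence property (handled by compactness and continuity of the projections) and correctly extracting, from Hausdorff convergence of the $R_n$, sequences in each $R_n$ converging to prescribed points of the limit $R$ — this is exactly the content of Hausdorff convergence in a compact ambient space and makes the distortion estimate pass to the limit.
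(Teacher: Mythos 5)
Your proposal is correct and follows essentially the same route as the paper: extract closed correspondences $R_n$ with near-optimal distortion, use compactness of the hyperspace of closed subsets of $X\times Y$ under the Hausdorff distance to pass to a limit $R$, verify that $R$ is a correspondence, and show its distortion is optimal. The only cosmetic difference is that the paper first proves the distortion functional is $4$-Lipschitz with respect to the Hausdorff distance and concludes by continuity, whereas you pass to the limit in the distortion inequality directly via approximating sequences; these are the same argument in different packaging.
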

\begin{proof}
    Endow $X \times Y$ with the product sup metric and let $\calC$ be set of all closed subspaces of $X \times Y$ equipped with the Hausdorff distance. We claim that $\dis_{\pi_X,\pi_Y}(\cdot): \calC \to [0,\infty)$ is $4$-Lipschitz. Indeed, let $S,T \in \calC $, $\epsilon>\dha(S,T)$ and $(x,y), (x',y') \in S$. Then, there exist $(w,z),(w',z') \in T$ such that $d_X(x,w),d_X(x',w')$, $d_Y(y,z)$, and $d_Y(y',z') < \epsilon$. Thus, we have
    \begin{equation}
        \begin{split}
            |d_X(x,x')-d_Y(y,y')| &\leq |d_X(x,x')-d_X(w,w')|+|d_X(w,w')-d_Y(z,z')|+|d_Y(z,z')-d_Y(y,y')| \\
                                  &\leq d_X(x,w)+d_X(x',w')+\dis_{\pi_X,\pi_Y}(T)+d_Y(z,y)+d_Y(z',y') \\
                                  &\leq \dis_{\pi_X,\pi_Y}(T)+4\epsilon\,.
        \end{split}
    \end{equation}
    We also have
    \begin{equation}
        \begin{split}
            d_B(\pi_X(x),\pi_Y(y)) &\leq d_B(\pi_X(x),\pi_X(w)) + d_B(\pi_X(w),\pi_Y(z)) + d_B(\pi_Y(z),\pi_Y(y)) \\
            &\leq \dis_{\pi_X,\pi_Y}(T)/2+2\epsilon.
        \end{split}
    \end{equation}
    Since $(x,y), (x',y') \in S$ and $\epsilon>\dha(S,T)$ are arbitrary, we have $\dis_{\pi_X,\pi_Y}(S) \leq \dis_{\pi_X,\pi_Y}(T)+4\dha(S,T)$. Similarly, we can show that $\dis_{\pi_X,\pi_Y}(T) \leq \dis_{\pi_X,\pi_Y}(R)+4\dha(S,T)$. Therefore,
    \begin{equation}
        |\dis_{\pi_X,\pi_Y}(S)-\dis_{\pi_X,\pi_Y}(T)| \leq 4\dha(S,T)\,;
    \end{equation}
    that is, the distortion function is 4-Lipschitz. By Theorem \ref{thm:gh_distortion}, there exists a correspondence $R_n$ between $X$ and $Y$ such that
    \begin{equation}
    \dis_{\pi_X,\pi_Y}(R_n) < 2\dgh(\mm{X},\mm{Y})+ 1/n.
    \end{equation}
    Since the distortion of the closure of a relation is same as the distortion of the original relation, without loss of generality, we can assume that $R_n$ is closed. By \cite[Theorem~7.3.8]{burago2001course}, $\calC$ is compact so that, by passing to a subsequence, we can assume that $R_n$ converges to $R$ in the Hausdorff metric. This implies that $\dis_{\pi_f,\pi_g}(R)=2\dgh(\mm{X},\mm{Y})$ by (Lipschitz) continuity. 
    
    To conclude the proof, we show that the projections of $R$ to $X$ and $Y$ are surjective. For each $x \in X$, there exists $y_n \in Y$ such that $(x,y_n) \in R_n$. By compactness, we can assume that $y_n$ converges to $y \in Y$. Given any $\epsilon>0$, $(x,y_n)$ is in the closed $\epsilon$-neighborhood $R^\epsilon$ of $R$ for $n$-large enough, which implies that $(x,y) \in R^\epsilon$. Since $R$ is closed and $\epsilon>0$ is arbitrary, $(x,y) \in R$. Similarly, for any $y \in Y$, there exists $x \in X$ such that $(x,y) \in R$. Hence, $R$ is a correspondence.
\end{proof}

Since the cardinality of a compact metric space is at most that of a continuum, the isometry classes of compact $B$-fields form a set, which we denote by $\fmm{F}_B$. 

\begin{theorem}\label{thm:gh_metric}
   The Gromov-Hausdorff distance $\dgh$ metrizes the moduli space $\fmm{F}_B$ of compact $B$-fields and $(\fmm{F}_B,\dgh)$ is a Polish metric space.
\end{theorem}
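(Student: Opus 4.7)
The proof plan has three parts: showing $\dgh$ is a metric on $\fmm{F}_B$, then separability, then completeness. Non-negativity, symmetry, and vanishing on the diagonal are immediate from Definition \ref{def: functional GHB distance}, and the triangle inequality is Proposition \ref{prop:gh_triangle}, so only definiteness remains. If $\dgh(\mm{X},\mm{Y})=0$ for compact $B$-fields, Proposition \ref{prop:gh_realization} furnishes a correspondence $R$ with $\dis_{\pi_X,\pi_Y}(R)=0$. Vanishing of the first term forces $d_X(x,x')=d_Y(y,y')$ for all $(x,y),(x',y')\in R$; specializing to $x=x'$ (resp.\ $y=y'$) shows that $R$ is the graph of a bijection $\phi \colon X \to Y$, and the identity then shows $\phi$ is an isometry. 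Vanishing of the second term yields $\pi_Y \circ \phi = \pi_X$, producing an isometry $\mm{X} \simeq \mm{Y}$ of $B$-fields.

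For separability, fix a countable dense set $D \subseteq B$ and consider the countable collection of isometry classes of finite $B$-fields whose underlying metric takes only rational values and whose map lies in $D$. Given a compact $\mm{X}$ and $\epsilon > 0$, a finite $\epsilon$-net $N \subseteq X$ with the restricted field structure satisfies $\dgh(\mm{X},\mm{N}) \leq \epsilon$ via the inclusion into $\mm{X}$. To approximate $\mm{N}$ by a rational field, I would (i) replace each $\pi_X(x_i)$ by a nearby $b_i \in D$ with $d_B(\pi_X(x_i),b_i) < \delta$, (ii) inflate every off-diagonal distance by a positive constant $c \geq 2\delta$, which preserves the triangle inequality with slack $c$ and keeps $x_i \mapsto b_i$ a $1$-Lipschitz map, and (iii) rationally approximate each inflated distance to within $c/4$. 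The slack absorbs the rational perturbation, so the result is a genuine $B$-field in the countable family; Theorem \ref{thm:gh_distortion} applied to the identity correspondence bounds $\dgh$ linearly in $\delta$ and $c$, which can be made $< \epsilon$.

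Completeness is proved by iterated application of the Gluing Lemma \ref{lem:gluing_fields}. Given a Cauchy sequence $(\mm{X}_n)$ in $\fmm{F}_B$, pass to a subsequence with $\dgh(\mm{X}_n,\mm{X}_{n+1}) < 2^{-n}$, and for each $n$ choose a $B$-field $\mm{Z}_n$ containing isometric copies of $\mm{X}_n$ and $\mm{X}_{n+1}$ with $\dha(X_n,X_{n+1}) < 2^{-n}$. Recursively set $\mm{W}_1 := \mm{Z}_1$ and form $\mm{W}_{n+1}$ by gluing $\mm{W}_n$ and $\mm{Z}_{n+1}$ along the common copy of $\mm{X}_{n+1}$ via Lemma \ref{lem:gluing_fields}. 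This produces an ascending chain whose union $W$ is a separable metric space equipped with a canonical $1$-Lipschitz map $\pi_W \colon W \to B$ extending each $\pi_{X_n}$. Take the metric completion $W^*$, which is Polish; since $B$ is complete, $\pi_W$ extends uniquely to a $1$-Lipschitz $\pi_{W^*} \colon W^* \to B$, making $\mm{W}^*$ a $B$-field. Inside $W^*$ the copies of $X_n$ satisfy $\dha(X_n,X_m) \leq 2^{1-n}$, so by completeness of the Hausdorff hyperspace of a complete metric space they converge to a closed set $X^* \subseteq W^*$; a short net argument, namely that a finite $\epsilon/2$-net of $X_n$ becomes an $\epsilon$-net of $X^*$ for $n$ large, shows $X^*$ is compact. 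Restricting $d_{W^*}$ and $\pi_{W^*}$ to $X^*$ gives a compact $B$-field $\mm{X}^*$ with $\dgh(\mm{X}_n,\mm{X}^*) \leq \dha^{W^*}(X_n,X^*) \to 0$. The principal technical obstacle is the bookkeeping of the iterated amalgamation so that the field structure (not just the metric) is preserved at every stage and in the completion, which is precisely what Lemma \ref{lem:gluing_fields} guarantees.
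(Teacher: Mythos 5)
Your proposal is correct and follows essentially the same route as the paper: definiteness via the optimal correspondence of Proposition \ref{prop:gh_realization}, separability via finite $B$-fields with rational distances and values in a countable dense subset of $B$ (the paper's Lemma \ref{lem:finite_approximation} achieves the $1$-Lipschitz and rationality constraints by taking $\max$ with $d_B(b_y,b_{y'})$ and rounding \emph{up}, which sidesteps the slight constant-chasing in your inflate-then-approximate step --- if you round down by $c/4$ you actually need $c\geq 8\delta/3$ rather than $c\geq 2\delta$ to keep the map $1$-Lipschitz), and completeness by iterated amalgamation into an ambient field followed by Hausdorff convergence and a total-boundedness argument. The paper glues using the $\coprod_{R_n,r_n}$ construction on correspondences rather than the Gluing Lemma on ambient spaces, but this is a cosmetic difference.
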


We begin the proof with a lemma.

\begin{lemma}\label{lem:finite_approximation}
    Let $\mm{X}$ be a compact $B$-field and $B_0$ a dense subset of $B$. For each $\epsilon>0$, there exists a finite $B$-field $\mm{Y} = (Y, d_Y, \pi_Y)$ such that $\pi_Y$ takes values in $B_0$, $d_Y$ only takes rational values, and $\dgh(\mm{X},\mm{Y}) < \epsilon$.
\end{lemma}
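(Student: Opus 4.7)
\emph{Proof plan.} The strategy is the classical one for showing separability of Gromov--Hausdorff spaces, adapted to the functional setting: replace $\mm{X}$ by a finite rationalized model built on a net. Fix $\epsilon > 0$ and choose $\delta>0$ small enough that $5\delta < 2\epsilon$. By compactness, there exists a finite $\delta$-net $\{x_1,\dots,x_n\} \subset X$. Let $Y := \{y_1,\dots,y_n\}$ be an abstract set in bijection with this net. Since $B_0$ is dense in $B$, for each $i$ pick $b_i \in B_0$ with $d_B(b_i,\pi_X(x_i)) < \delta$, and set $\pi_Y(y_i) := b_i$.

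Next I would define $d_Y$ on $Y$ by declaring $d_Y(y_i,y_i)=0$ and, for $i\neq j$, choosing a rational number
\[
d_Y(y_i,y_j) \in \bigl[d_X(x_i,x_j) + 2\delta,\, d_X(x_i,x_j) + 3\delta\bigr],
\]
symmetric in $i,j$. Two verifications are required. First, $d_Y$ is a metric: positivity and symmetry are immediate, and for distinct $i,j,k$ the slack built into the lower bound gives
\[
d_Y(y_i,y_j) + d_Y(y_j,y_k) \ge d_X(x_i,x_j) + d_X(x_j,x_k) + 4\delta \ge d_X(x_i,x_k) + 4\delta \ge d_Y(y_i,y_k),
\]
since $d_Y(y_i,y_k) \le d_X(x_i,x_k) + 3\delta$. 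Second, $\pi_Y$ is $1$-Lipschitz: using that $\pi_X$ is $1$-Lipschitz and the choice of $b_i$,
\[
d_B(\pi_Y(y_i),\pi_Y(y_j)) \le d_B(b_i,\pi_X(x_i)) + d_X(x_i,x_j) + d_B(\pi_X(x_j),b_j) < d_X(x_i,x_j) + 2\delta \le d_Y(y_i,y_j).
\]
Thus $\mm{Y}=(Y,d_Y,\pi_Y)$ is a finite $B$-field with $\pi_Y$ taking values in $B_0$ and $d_Y$ rational-valued.

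To bound $\dgh(\mm{X},\mm{Y})$, I would invoke Theorem~\ref{thm:gh_distortion} via an explicit correspondence. For each $x\in X$, pick an index $j(x)$ with $d_X(x,x_{j(x)}) < \delta$, choosing $j(x_i)=i$ on the net, and set
\[
R := \{(x,y_{j(x)}) : x \in X\} \subseteq X\times Y.
\]
The projection to $X$ is surjective by construction, and the projection to $Y$ is surjective since $(x_i,y_i)\in R$ for every $i$. For $(x,y_{j(x)}),(x',y_{j(x')}) \in R$, writing $i=j(x)$, $i'=j(x')$,
\[
\bigl|d_X(x,x') - d_Y(y_i,y_{i'})\bigr| \le \bigl|d_X(x,x')-d_X(x_i,x_{i'})\bigr| + \bigl|d_X(x_i,x_{i'}) - d_Y(y_i,y_{i'})\bigr| \le 2\delta + 3\delta = 5\delta,
\]
and $d_B(\pi_X(x),\pi_Y(y_i)) \le d_B(\pi_X(x),\pi_X(x_i)) + d_B(\pi_X(x_i),b_i) < 2\delta$. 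Hence $\dis_{\pi_X,\pi_Y}(R) \le \max(5\delta, 4\delta) = 5\delta$, and Theorem~\ref{thm:gh_distortion} gives $\dgh(\mm{X},\mm{Y}) \le 5\delta/2 < \epsilon$.

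The only delicate point is choosing $d_Y$ so that the three competing requirements---rationality, the triangle inequality, and the $1$-Lipschitz property of $\pi_Y$---all hold simultaneously; this is what forces a uniformly \emph{upward} perturbation on the order of $\delta$, rather than an arbitrary $\delta$-close rational approximation. Everything else is a routine chase of inequalities.
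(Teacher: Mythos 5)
Your proposal is correct and follows essentially the same route as the paper: take a finite $\delta$-net, approximate the function values by points of $B_0$, perturb the metric \emph{upward} by an amount on the order of $\delta$ to reconcile rationality, the triangle inequality, and the $1$-Lipschitz condition on $\pi_Y$, and then bound the distortion of an explicit correspondence via Theorem~\ref{thm:gh_distortion}. The only difference is cosmetic: the paper rounds $\max(d_X(y,y'),d_B(b_y,b_{y'}))$ up to a multiple of $1/n$, whereas you pick an arbitrary rational in $[d_X+2\delta,\,d_X+3\delta]$ and verify $d_B(b_i,b_j)\le d_X(x_i,x_j)+2\delta$ separately; both devices serve the same purpose.
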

\begin{proof}
    Let $Y$ be a finite subset of $X$ such that for all $x \in X$ there exists $y \in Y$ satisfying $d_X(x,y)<\epsilon/3$. For each $y \in Y$, pick $b_y \in B_0$ such that $d_B(\pi_X(y),b_y)<\epsilon/3$. Letting $n>0$ be an integer such that $1/n<\epsilon/3$, define $d_Y: Y \times Y \to \mathbb{Q}$ by
    \begin{equation}
    d_Y(y,y'):=\lceil n \max(d_X(y,y'),d_B(b_y,b_y')) \rceil/n.
    \end{equation}
    Symmetry and definiteness of $d_Y$ are clear, so to show that it is a metric it suffices to verify the triangle inequality:
    \begin{equation}
        \begin{split}
            d_Y(y,y')+d_Y(y',y'') &= \big( \lceil n \max\{d_X(y,y'),d_B(b_y,b_{y'})\} \rceil +  \lceil n \max\{d_X(y',y''),d_B(b_{y'},b_{y''}\}) \rceil \big)/n \\
            &\geq  \lceil n (\max\{d_X(y,y'),d_B(b_y,b_{y'})\} + \max\{d_X(y',y''),d_B(b_{y'},b_{y''})\} )\rceil/n \\
            &\geq \lceil n \max\{d_X(y,y')+d_X(y',y''),d_B(b_y,b_{y'})+d_B(b_{y'},b_{y''})\}\rceil /n \\
            &\geq  \lceil n \max\{d_X(y,y''),d_B(b_y,b_{y''})\}\rceil /n=d_Y(y,y'').
        \end{split}
    \end{equation}
    Define $\pi_Y: Y \to B$ by $y \mapsto b_y$. Note that, by definition, $\pi_Y$ is $1$-Lipschitz and takes values in $B_0$. To conclude, we show that the $B$-field $\mm{Y}:=(Y,d_Y,\pi_y)$ is $\epsilon$-close to $\mm{X}$. Let $R$ be the correspondence between $X$ and $Y$ given by
    \begin{equation}
    R:=\{(x,y): x \in X, y \in Y, d_X(x,y)<\epsilon/3\}.
    \end{equation}
    It is enough to show that $\dis_{\pi_X,\pi_Y}(R) \leq 2\epsilon$. Let $(x,y), (x',y') \in R$. Note that $d_Y(y,y') \geq d_X(y,y')$. Using
    $d_B(b_y,b_{y'}) \leq 2\epsilon/3 +d_B(\pi_X(y),\pi_X(y')) \leq   2\epsilon/3 +d_X(y,y')$, we can get
    \begin{equation}
        \begin{split}
            |d_Y(y,y')-d_X(x,x')| &\leq d_Y(y,y')-d_X(y,y')+|d_X(y,y')-d_X(x,x')| \\
            &\leq 1/n + \max(d_X(y,y'),d_B(b_y,b_{y'}))-d_X(y,y')+d_X(x,y)+d_X(x',y') \\
            &\leq \max(d_X(y,y'),d_X(y,y')+2\epsilon/3)-d_X(y,y')+\epsilon < 2\epsilon.
        \end{split}
    \end{equation}
    We also have that
    \begin{equation}
            d_B(\pi_X(x),\pi_Y(y)) \leq d_B(\pi_X(x),\pi_X(y))+d_B(\pi_X(y),\pi_Y(y)) \leq d_X(x,y)+\epsilon/3 <\epsilon \,.
    \end{equation}
    Therefore, $\dis_{\pi_X,\pi_Y}(R) \leq 2\epsilon$. This completes the proof.
\end{proof}

\begin{proof}[Proof of Theorem \ref{thm:gh_metric}]
    Symmetry of $\dgh$ is straightforward and the triangle inequality has been established in Proposition \ref{prop:gh_triangle}. To show definiteness, suppose that $\dgh(\mm{X},\mm{Y})=0$. We need to show that $\mm{X}$ is isometric to $\mm{Y}$. By Proposition \ref{prop:gh_realization}, there exists a correspondence $R$ between $X$ and $Y$ such that $\dis_{\pi_X,\pi_Y}(R)=0$. This implies that $R$ is the graph of an isometry between $\mm{X}$ and $\mm{Y}$.
    
    We now argue that $(\fmm{F}_B,\dgh)$ is Polish. Let $B_0$ be a countable dense set in $B$ and denote by $\calF$ the collection of isometry classes of finite $B$-fields that map into $B_0$ and only have rational distances. By Lemma \ref{lem:finite_approximation}, $\calF$ is dense in $\fmm{F}_B$. Note that a subset of the countable set $\cup_{n>0} (\mathbb{Q}^{n^2} \times B_0^n)$ surjects onto $\calF$, which implies that $\calF$ is countable.

    It remains to verify completeness. Let $\mm{X}_n$ be a Cauchy sequence of compact $B$-fields with respect to Gromov-Hausdorff distance. To prove that the sequence is convergent, it suffices to construct a convergent subsequence. By passing to a subsequence if necessary, we can assume that $\dgh(\mm{X}_n,\mm{X}_{n+1})<1/2^n$ for all $n$. Then, there exists a correspondence $R_n$ between $X_n$ and $X_{n+1}$ such that $\dis_{\pi_{X_n},\pi_{X_{n+1}}}(R_n) < 2/2^n$ for all $n$. Letting $r_n=1/2^n$, apply the construction described in Definition \ref{def:connecting_fields} consecutively to get a $B$-field
    \begin{equation}
    \mm{Z}_n:=\Big( \big( (\mm{X}_1 \coprod_{R_1,r_1} \mm{X}_2) \coprod_{R_2,r_2} \mm{X}_3\big) \ldots\Big) \coprod_{R_n,r_n} \mm{X}_{n+1} .
    \end{equation}
    for each $n>0$. Clearly, $\mm{Z}_n$ is a subfield of $\mm{Z}_{n+1}$. Let $\mm{Z}$ be the completion of the co-limit $\cup_{n>0} \mm{Z}_n$. Note that the union of countable dense sets in $Z_n$ is a countable dense set in $Z$ and therefore $Z$ is Polish. Hence, $\mm{Z}$ is a $B$-field. By Lemma \ref{lem:connect_hausdorff}, $\dha^Z(X_n,X_{n+1})=r_n=1/2^n$ so $(X_n)$ forms a Cauchy sequence with respect to Hausdorff distance in $Z$. By \cite[Proposition~7.3.7]{burago2001course}, there exists a closed subspace $Y \subseteq Z$ such that $X_n$ Hausdorff converges to $Y$ in $Z$. Now we show that $Y$ is compact. Since $Y$ is complete, it is enough to show that it is totally bounded. Given $\epsilon>0$, pick $n>0$ such that $\dha^Z(X_n,Y) < \epsilon/3$ and let $A \subseteq X_n$ be a finite $\epsilon/3$-net of $X_n$. For each $a \in A$, let $b_a \in Y$ be such that $d_Z(a,b_a)<\epsilon/3$ and set $B:=\{b_a:a \in A \}$. For any $y \in Y$, there exist $x \in X_n$ such that $d_Z(x,y)<\epsilon/3$ and $a \in A$ such that $d_Z(x,a) \leq \epsilon/3$. Therefore, we have
    \begin{equation}
    d_Z(y,b_a) \leq d_Z(y,x)+d_Z(x,a)+d_Z(a,b_a) < \epsilon \,.
    \end{equation}
    This means that $B$ is a finite $\epsilon$-net in $Y$ so that $Y$ is totally bounded. Hence, $Y$ is compact. Letting $\mm{Y}=(Y,d_Z|_Y,\pi_Z|_Y)$, we have that $\dgh(\mm{X}_n,\mm{Y}) \leq \dha^Z(X_n,Y)$. Thus, $\mm{X}_n$ converges to $\mm{Y}$ in the Gromov-Hausdorff distance.
\end{proof}

It is known that the isometry classes of compact metric spaces form a geodesic space under the Gromov-Hausdorff distance \cite{ivanov2016gromov, Chowdhury2018}. (A geodesic space is a metric space such that between any two points, there is a shortest path whose length is equal to the distance between its endpoints.) We close this section with an extension of the result to $B$-fields. We note that our method of proof is distinct from that of \cite{ivanov2016gromov}. Nonetheless, with the additional assumption that $B$ has a geodesic bicombing, the proof in \cite{ivanov2016gromov} can be adapted to this setting.

\begin{proposition} \label{prop:geo}
   $(\fmm{F}_B,\dgh)$ is a geodesic space if and only if $B$ is a geodesic space.
\end{proposition}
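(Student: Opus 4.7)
The plan is to prove the two implications separately. For the forward direction ($\fmm{F}_B$ geodesic $\Rightarrow$ $B$ geodesic), I first note that the assignment $b \mapsto \mm{B}_b := (\{\ast\}, 0, b)$ is an isometric embedding of $B$ into $(\fmm{F}_B, \dgh)$, since the unique correspondence between singletons has distortion $2 d_B(b_1, b_2)$. Given $b_1, b_2 \in B$ and any midpoint $\mm{M}$ of $\mm{B}_{b_1}$ and $\mm{B}_{b_2}$ in $\fmm{F}_B$, the sole correspondence from $\{\ast\}$ to $M$ is $\{\ast\} \times M$; its distortion equaling $d_B(b_1, b_2)$ forces $d_B(b_i, \pi_M(m)) \leq d_B(b_1, b_2)/2$ for every $m \in M$ and $i = 1, 2$, and the triangle inequality in $B$ then compels $\pi_M(m)$ to be a midpoint of $b_1, b_2$. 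Thus $B$ has midpoints for every pair, which together with completeness of the Polish space $B$ yields that $B$ is geodesic.

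For the reverse direction, my idea is to first construct midpoints explicitly in the finite case and then bootstrap to compact $B$-fields via Lemma \ref{lem:finite_approximation}. For finite $\mm{X}_0, \mm{X}_1$ with $\lambda := \dgh(\mm{X}_0, \mm{X}_1)$, Proposition \ref{prop:gh_realization} yields an optimal correspondence $R$ with $\dis_{\piX, \piY}(R) = 2\lambda$. Because $d_B(\piX(x), \piY(y)) \leq \lambda$ and $B$ is geodesic, I pick a midpoint $m_{x, y} \in B$ of $\piX(x)$ and $\piY(y)$ for each $(x, y) \in R$, and define the midpoint field $\mm{M} = (M, d_M, \pi_M)$ by $M := R$, $\pi_M(x, y) := m_{x, y}$, and
\[
d_M\big((x, y), (x', y')\big) := \max\bigg\{ \tfrac{1}{2}\big(d_{X_0}(x, x') + d_{X_1}(y, y')\big),\; d_B(\pi_M(x, y), \pi_M(x', y')) \bigg\}.
\]
The maximum of the metric $(d_{X_0} + d_{X_1})/2$ and the pseudometric $d_B \circ (\pi_M, \pi_M)$ is a metric, $\pi_M$ is 1-Lipschitz by construction, and $(M, d_M)$ is finite and hence Polish. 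For the correspondence $R_0 := \{(x, (x,y)) : (x,y) \in R\}$ between $X_0$ and $M$, a short case analysis bounds $|d_{X_0}(x, x') - d_M((x, y), (x', y'))| \leq \lambda$: when $d_M$ equals the convex combination, use $|d_{X_0} - d_{X_1}| \leq 2\lambda$; when $d_M = d_B(m_{x,y}, m_{x',y'})$, bound from above via the $B$-triangle inequality through $\piX(x), \piX(x')$ using $d_B(m_{x,y}, \piX(x)) \leq \lambda/2$, and bound from below using $d_M \geq (d_{X_0} + d_{X_1})/2$. The projection-distortion term $2 d_B(\piX(x), m_{x,y})$ equals $d_B(\piX(x), \piY(y)) \leq \lambda$, so $\dgh(\mm{X}_0, \mm{M}) \leq \lambda/2$. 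Symmetry gives $\dgh(\mm{X}_1, \mm{M}) \leq \lambda/2$, and the triangle inequality forces equality throughout, so $\mm{M}$ is a midpoint.

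For compact $\mm{X}_0, \mm{X}_1$, Lemma \ref{lem:finite_approximation} supplies finite approximations $\mm{X}_i^n$ with $\dgh(\mm{X}_i, \mm{X}_i^n) \to 0$, and the preceding construction yields exact midpoints $\mm{M}^n$ of $\mm{X}_0^n, \mm{X}_1^n$ satisfying $\dgh(\mm{X}_i, \mm{M}^n) \to \lambda/2$. The family $\{\mm{M}^n\}$ is uniformly bounded and uniformly totally bounded, since each underlying set embeds into $X_0 \times X_1$ while the $\pi_{M^n}$-images lie in the $\lambda/2$-neighborhood of the compact set $\piX(X_0) \cup \piY(X_1) \subseteq B$. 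A Gromov-type compactness argument for $B$-fields then extracts a convergent subsequence $\mm{M}^{n_k} \to \mm{M}$ in $(\fmm{F}_B, \dgh)$, and continuity of $\dgh$ makes $\mm{M}$ an exact midpoint of $\mm{X}_0, \mm{X}_1$. Combining the midpoint property for every pair with completeness of $\fmm{F}_B$ (Theorem \ref{thm:gh_metric}) via the standard dyadic construction produces the full geodesic. The main obstacle lies in this passage from finite to compact: the ``max-metric on $R$'' recipe does not directly produce a compact $B$-field when $R$ is infinite because a general Polish geodesic space need not admit a continuous midpoint selection, so the argument must be routed through finite midpoints and a compactness extraction rather than a one-shot construction.
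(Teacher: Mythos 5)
Your forward direction is essentially the paper's argument (singleton fields, the unique correspondence, and the distortion bound forcing $\pi_W(w)$ to be a midpoint), and it is correct. Your reverse direction takes a genuinely different route: the paper builds the entire geodesic in one shot by embedding $\mm{X}$ and $\mm{Y}$ into $\mm{W}=\mm{X}\coprod_{R,r}\mm{Y}$, then into a geodesic $B$-field $\overline{W}\times B$, and tracing out the sets $\bar{A}_t$ swept by geodesics; you instead produce midpoints and invoke completeness. Your finite-field midpoint construction is correct and clean: I checked that the max-metric on $R$ is a metric, that $\pi_M$ is $1$-Lipschitz, and that the correspondence $R_0$ has distortion at most $\lambda$ (the case analysis you sketch does go through, using $d_B(m_{x,y},\pi_X(x))=\tfrac12 d_B(\pi_X(x),\pi_Y(y))\le\lambda/2$ and $d_M\ge\tfrac12(d_{X_0}+d_{X_1})$).

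The gap is in the passage from finite to compact fields. The ``Gromov-type compactness argument for $B$-fields'' is not proved in the paper, and more importantly its necessary hypothesis fails for your sequence $\{\mm{M}^n\}$: if $\mm{M}^{n_k}\to\mm{M}$ in $\dgh$, then correspondences of small distortion force $\pi_{M^{n_k}}(M^{n_k})$ to lie in shrinking neighborhoods of the compact set $\pi_M(M)$, so precompactness of the family in $(\fmm{F}_B,\dgh)$ requires the images in $B$ to accumulate on a fixed totally bounded set. Your construction only guarantees that these images lie in a \emph{bounded} subset of $B$ (the $\lambda/2$-neighborhood of a compact set), which is not totally bounded in a general Polish geodesic space. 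Concretely, take $B=c_0$ with the sup norm, $b_0=0$, $b_1=2e_1$: the points $e_1+e_k$, $k\ge 2$, are pairwise $1$-separated midpoints, so singleton fields $\mm{B}_{m_k}$ with $m_k=e_1+e_k$ form a $1$-separated set in $\fmm{F}_B$ with no convergent subsequence. Since your midpoints $m_{x,y}$ are chosen arbitrarily and vary with $n$, nothing prevents $\{\mm{M}^n\}$ from behaving this way, and the extraction step fails. This is exactly the midpoint-selection obstruction you flag at the end, but routing through finite approximations does not circumvent it: you would need to fix a single (say, measurably or hereditarily consistent) midpoint assignment on a compact set of pairs in $B$ whose image is totally bounded, and such a selection need not exist. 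The paper sidesteps the need for a selection \emph{on $B$ alone} by working inside one ambient geodesic $B$-field containing both $\mm{X}$ and $\mm{Y}$ at Hausdorff distance exactly $r$, so you may want to either adopt that device or restrict your argument to targets $B$ admitting a continuous midpoint map (e.g., a geodesic bicombing), in which case your construction does work verbatim for compact fields without any limiting argument.
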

\begin{proof}
    Assume $(\fmm{F}_B,\dgh)$ is a geodesic space. To show that this implies that $B$ is geodesic, by \cite[Theorem~2.4.16]{burago2001course}, it suffices to prove that any pair of distinct points $b, b' \in B$ has a midpoint. Let $X=Y:=\{\ast\}$ be the same 1-point space (with the $0$ metric) and set $\mm{X}=(\{\ast\}, 0, \ast \mapsto b)$, $\mm{Y}=(\{\ast\}, 0, \ast \mapsto b')$. There is a unique correspondence $R$ between $X$ and $Y$ whose distortion is $\dis_{\pi_X,\pi_Y} (R)=2d_B(b,b')$. By Proposition \ref{thm:gh_distortion}, $\dgh(\mm{X},\mm{Y})=d_B(b,b')$. By assumption, there exists a compact field $\mm{W}$ such that $\dgh(\mm{X},\mm{W})=\dgh(\mm{Y},\mm{W})=d_B(b,b')/2$. Let $S$ and $T$ be the unique correspondences between $X$ and $W$, and $Y$ and $W$, respectively. By Proposition \ref{thm:gh_distortion}, $\dis_{\pi_X,\pi_W} (S)=\dis_{\pi_Y,\pi_W} (T)=d_B(b,b')$. Then, for any $w \in W$, we have $d_B(b,\pi_W(w)) \leq d_B(b,b')/2$ and $d_B(b',\pi_W(w)) \leq d_B(b,b')/2$. This implies that $\pi_W(w)$ is a midpoint between $b$ and $b'$.

    To prove the converse statement, assume $B$ is a geodesic space. Given any pair of compact $B$-fields $\mm{X}$ and $\mm{Y}$, we construct a Gromov-Hausdorff geodesic between them. We can assume that $r=\dgh(\mm{X},\mm{Y}) > 0$. By Proposition \ref{prop:gh_realization}, there is a correspondence $R$ between $X$ and $Y$ such that $\dis_{\pi_X,\pi_Y}(R)=2r$. Let $\mm{W}:=\mm{X} \coprod_{R,r} \mm{Y}$, Then, both $\mm{X}$ and $\mm{Y}$ are contained in $\mm{W}$ and $\dha^W(X,Y)=r$ by Lemma \ref{lem:connect_hausdorff}. Now we use the fact that every compact metric space can be isometrically embedded into a compact geodesic space, for example, its injective hull, (cf.\,\cite{lang2013injective}). Let $\overline{W}$ be a compact geodesic space containing $W$ and $Z=\overline{W} \times B$ endowed with the product sup-metric. $Z$ is a geodesic space because both $\overline{W}$ and $B$ have this property. Letting $\pi_Z: Z \to B$ denote projection onto the second coordinate, $\mm{Z}:=(Z,B,\pi_Z)$ defines a $B$-field. Moreover, $\mm{W}$ isometrically embeds into $\mm{Z}$ via $w \mapsto (w,\pi_W(w))$. Thus, we can assume that $\mm{X}$ and $\mm{Y}$ are sub-fields of $\mm{Z}$ satisfying $\dha^Z(X,Y)=r$. Define a correspondence between $X$ and $Y$ by
    \begin{equation}
     R:=\{(x,y): \text{$x$ is a $d_Z$-closest point to  $y$ in  $X$ or  $y$ is a $d_Z$-closest point to $x$ in  $Y$}\}.
    \end{equation}
    Any $(x,y) \in R$ satisfies $d_Z(x,y) \leq r$. For $(x,y) \in R$, let $\gamma_{x,y}\colon [0,1] \to Z$ be a constant speed geodesic from $x$ to $y$ in $Z$. For $t \in [0,1]$, let $A_t=\{\gamma_{x,y}(t): (x,y) \in R \}$. Since $R$ is a correspondence, we have $A_0=X$ and $A_1=Y$. Since
    \begin{equation}
    d_Z(\gamma_{x,y}(s),\gamma_{x,y}(t))=|s-t|\, d_Z(x,y) \leq |s-t|\, r \,,
    \end{equation}
    for all $(x,y) \in R$ and $s,t \in [0,1]$, we have
    \begin{equation}
    \dha^Z(A_s,A_t) \leq |s-t|\, \dgh(\mm{X},\mm{Y}) \,.
    \end{equation}
    Let $\mm{X}_t:=(\bar{A}_t,d_Z|_{\bar{A}_t},\pi_Z|_{\bar{A}_t})$, where $\bar{A}_t$ denotes the closure of $A_t$ in $Z$. Then, $\mm{X}_t$ is a compact $B$-field, $\mm{X}_0=\mm{X}$, $\mm{X}_1=\mm{Y}$ and 
    \begin{equation}
    \dgh(\mm{X}_s,\mm{X}_t) \leq \dha^Z(\bar{A}_s,\bar{A}_t) =\dha^Z(A_s,A_t) \leq |s-t|\,\dgh(\mm{X},\mm{Y}),
    \end{equation}
    for all $s, t \in [0,1]$, which implies that $ \dgh(\mm{X}_s,\mm{X}_t)= |s-t|\,\dgh(\mm{X},\mm{Y})$ by the triangle inequality. This shows that $t \mapsto \mm{X}_t$ is a geodesic between $\mm{X}$ and $\mm{Y}$ in $(\fmm{F}_B,\dgh)$.
\end{proof}

\section{The Urysohn Field} \label{sec:urysohn}

The primary goal of this section is to obtain a description of the Gromov-Hausdorff distance for compact $B$-fields in terms of the Hausdorff distances between subfields of a Urysohn universal $B$-field modulo the action of its automorphism group. An {\em automorphism} of a field $\pi \colon X \to B$ is a bijective isometry $\psi \colon X \to X$ that satisfies $\pi \circ \psi = \pi$. An analogous result holds for the Gromov-Prokhorov distance.

\begin{definition}[Urysohn Field]
A $B$-field $\pi\colon U \to B$ is called a {\em Urysohn field over $B$} if for each finite  subspace $A$ of $U$ and $1$-Lipschitz map $\phi\colon A^\ast \to B$ defined on a one-point metric extension $A^\ast= A \sqcup \{a^\ast\}$, satisfying $\phi|_A = \pi|_A$, there exists  an isometric embedding  $\imath \colon A^* \to U$ such that the restriction $\imath|_A$ is the inclusion map and $\pi \circ \imath = \phi$.
\end{definition}

The next theorem is a special case of a more general result proven in \cite{doucha2013universal} in a model theory framework. For a proof based on metric geometry constructs that extends to the functional setting a well-known construction of Urysohn space due to Kat{\v e}tov \cite{katetov1986universal}, the reader may consult \cite{anbouhi2022universal}.

\begin{theorem}[Existence and Uniqueness of Urysohn Fields]\label{thm:urysohnmap}
If $B$ is a Polish space, then the following statements hold:
\begin{enumerate}[\rm (i)]
    \item there exists a Urysohn field $\pi\colon U \to B$, unique up to isometry;
    \item Urysohn field is universal, that is, every $B$-field isometrically embeds into the Urysohn field;
    \item every isometry between finite subfields of the Urysohn field extends to an automorphism of the Urysohn field.
\end{enumerate}
\end{theorem}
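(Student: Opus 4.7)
The plan is to adapt Katětov's classical construction of the Urysohn metric space to the functional setting. The basic building block is what I will call a \emph{functional Katětov pair} on a $B$-field $\mm{X}=(X,d_X,\pi_X)$: a pair $(f,b)$ where $f\colon X\to [0,\infty)$ is a Katětov function, i.e.\ satisfies $|f(x)-f(x')|\le d_X(x,x')\le f(x)+f(x')$, and $b\in B$ satisfies the compatibility inequality $d_B(b,\pi_X(x))\le f(x)$ for all $x\in X$. This inequality is precisely what is needed so that adjoining a new point $p$ with $d(p,x):=f(x)$ and $\pi(p):=b$ preserves the 1-Lipschitz condition. Let $E(\mm{X})$ be the set of such pairs with metric $d_E\big((f,b),(f',b')\big):=\sup_{x\in X}|f(x)-f'(x)|$ and projection $\pi_E(f,b):=b$. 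The embedding $\mm{X}\hookrightarrow E(\mm{X})$, $x\mapsto(d_X(x,\cdot),\pi_X(x))$, is isometric over $B$, and by construction every compatible one-point extension of a finite subspace $A\subseteq X$ is realized by some pair in $E(\mm{X})$.

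To maintain separability under iteration, I would restrict to the subset $E_{\mathrm{fin}}(\mm{X})\subseteq E(\mm{X})$ of pairs $(f,b)$ where $f$ is determined by its values at a finite subset $A\subseteq X$ via $f(x)=\min_{a\in A}\bigl(f(a)+d_X(x,a)\bigr)$ with $f(a)$ rational, and $b$ lies in a fixed countable dense subset $B_0\subseteq B$. A standard argument shows $E_{\mathrm{fin}}(\mm{X})$ remains separable when $\mm{X}$ is, and still realizes arbitrary one-point extensions over finite sets up to arbitrarily small error. Now build an increasing chain of $B$-fields $\mm{X}_0\subseteq\mm{X}_1\subseteq\cdots$ with $\mm{X}_{n+1}:=E_{\mathrm{fin}}(\mm{X}_n)$, starting from any countable seed $\mm{X}_0$ whose image in $B$ is dense. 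The completion $\mm{U}_B$ of $\bigcup_n\mm{X}_n$ is a Polish $B$-field, and a routine density/completion argument verifies that it satisfies the Urysohn one-point extension property verbatim, establishing existence in (i).

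For (ii), given any $B$-field $\mm{Y}$, enumerate a countable dense sequence $(y_n)$ in $Y$ and build an isometric embedding inductively: at stage $n$, apply the Urysohn property of $\mm{U}_B$ to the finite subfield already mapped, together with the one-point metric extension by $y_{n+1}$ whose target value is $\pi_Y(y_{n+1})$; this produces the next image point. The resulting map is 1-Lipschitz on a dense subset and extends by completion to an isometric embedding $\mm{Y}\hookrightarrow\mm{U}_B$. For (iii), use a back-and-forth argument: given an isometry $\phi\colon\mm{A}\to\mm{A}'$ between finite subfields of $\mm{U}_B$, enumerate a countable dense subset $(u_n)$ of $U$, and alternately extend $\phi$ by matching one new source point and one new target point at each stage, invoking the Urysohn property on both sides to find images/preimages with the prescribed distances and $B$-values. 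The limit map extends by continuity to a bijective isometry of $\mm{U}_B$ commuting with $\pi$. Uniqueness in (i) then follows by a standard back-and-forth between two Urysohn fields, or equivalently by applying (iii) to extend an isometry between any pair of singletons sharing the same $B$-value.

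The main obstacle is the coupling between the two constraints at every extension step: one must simultaneously satisfy the Katětov triangle-type inequality on distances and the 1-Lipschitz inequality over $B$. Crucially, in the back-and-forth argument, once a prospective image is required to have prescribed distances to the previously matched points \emph{and} a prescribed value in $B$, the compatibility condition $d_B(b,\pi(u))\le d(u,p)$ must hold automatically as a consequence of the distances already being consistent on the matched subfield. Verifying this compatibility at each inductive step, and that it is preserved under taking completions and suprema in $E_{\mathrm{fin}}$, is the delicate technical core on which the rest of the argument hinges.
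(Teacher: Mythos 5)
Your route --- functional Kat\v{e}tov pairs, a countable tower of finitely-supported rational approximations, completion, and back-and-forth for universality and homogeneity --- is exactly the one the paper points to: the theorem is not proved in this paper but is deferred to Doucha's model-theoretic result and to the authors' arXiv version, which carries out precisely this Kat\v{e}tov-style construction. Your observation that, in the back-and-forth step, the compatibility inequality $d_B(b,\pi(u))\le d(u,p)$ holds automatically because the partial isometry commutes with the projections and $\pi$ is 1-Lipschitz is correct, and it is indeed the key point making the classical argument go through in the functional setting.

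There is, however, a concrete error in your construction of $E(\mm{X})$. With $d_E\big((f,b),(f',b')\big):=\sup_{x}|f(x)-f'(x)|$, the projection $\pi_E(f,b)=b$ is \emph{not} 1-Lipschitz, and $d_E$ is not even a metric on the set of pairs: take $X=\{x\}$ a single point with $B=\real$ and $\pi_X(x)=0$; the pairs $(f\equiv 1,\,b=1)$ and $(f'\equiv 1,\,b'=-1)$ both satisfy your compatibility condition, yet $d_E=0$ while $d_B(b,b')=2$. The compatibility inequalities only give $d_B(b,b')\le f(x)+f'(x)$ for every $x$, i.e.\ a bound by $\inf_x\big(f(x)+f'(x)\big)$, which \emph{dominates} $\sup_x|f(x)-f'(x)|$ rather than being dominated by it, so nothing forces $d_B(b,b')\le d_E$. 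Consequently $E(\mm{X})$ as you define it is not a $B$-field, and the tower $\mm{X}_{n+1}=E_{\mathrm{fin}}(\mm{X}_n)$ collapses at the first step. The fix is to set $d_E\big((f,b),(f',b')\big):=\max\big\{\sup_x|f(x)-f'(x)|,\;d_B(b,b')\big\}$, after which three things must be re-verified: that this is a metric (the triangle inequality for the max of two pseudometrics is immediate once each factor satisfies it); that the canonical embedding $x\mapsto(d_X(x,\cdot),\pi_X(x))$ remains isometric, which holds because $\pi_X$ being 1-Lipschitz makes the second term never exceed the first on image points; and that the distance from a pair $(f,b)$ to the image of $a\in A$ is still exactly $f(a)$, which is precisely where the compatibility inequality $d_B(b,\pi_X(a))\le f(a)$ is needed. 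With that correction, and with the density/completion argument for realizing one-point extensions of finite subsets of the completion spelled out (this is genuinely a lemma, not a formality --- compare Lemma \ref{compact injectivity}), your sketch becomes a correct proof.
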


Given an equivalence relation $\sim$ on a metric space $(X,d_X)$, the quotient metric is the maximal (pseudo) metric on $X/\!\!\sim$ that makes the quotient map $\pi \colon X \to X/\!\!\sim$ 1-Lipschitz. Let $F(\mm{U}_B)$ be the space of compact subfields of $\mm{U}_B = (U,B,\pi_U)$, equipped with the Hausdorff distance, and $Aut(B)$ the automorphism group of $\mm{U}_B$, which acts on $\mm{U}_B$ by isometries. On $F(\mm{U}_B)/Aut(B)$, by \cite[Lemma~3.3.6]{burago2001course}, the quotient metric may be expressed as
\begin{equation} \label{eq:autb}
d_F^B (\mm{X},\mm{Y}) =  \inf_{\Phi, \Psi \in Aut(B)} d_H^U (\phi (X),\psi (Y))) = \inf_{\Psi \in Aut(B)} d_H^U (X,\psi(Y)).
\end{equation}

The next result interprets $(\fmm{F}_B, \dgh)$ in terms of subfields of a Urysohn universal field. There is a corresponding result for compact metric spaces, as remarked in \cite[3.11~2/3]{gromov2007metric}.

\begin{theorem} \label{thm: F_B = F_I}
The moduli space $(\fmm{F}_B, \dgh)$ of isometry classes of compact $B$-fields equipped with the Gromov-Hausdorff distance is isometric to the quotient space $(F(\mm{U}_B)/Aut(B), d_F^B)$.
\end{theorem}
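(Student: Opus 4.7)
The plan is to introduce the map $\Theta\colon F(\mm{U}_B)/Aut(B) \to \fmm{F}_B$ sending the automorphism-orbit of a compact subfield $\mm{X} \subset \mm{U}_B$ to its abstract $B$-field isometry class, and verify that $\Theta$ is a well-defined isometric bijection. Well-definedness is automatic, since automorphisms of $\mm{U}_B$ restrict to $B$-field isometries between subfields, and surjectivity is immediate from Theorem \ref{thm:urysohnmap}(ii) via the universal embedding of any compact $B$-field into $\mm{U}_B$.

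The technical core of the proof is a \emph{homogeneity lemma}, strengthening Theorem \ref{thm:urysohnmap}(iii) from finite to compact subfields: every $B$-field isometry $\alpha\colon \mm{X}_1 \to \mm{X}_2$ between compact subfields of $\mm{U}_B$ is the restriction of some $\Psi \in Aut(B)$. I would prove this by a back-and-forth construction. The first step is to bootstrap Theorem \ref{thm:urysohnmap}(iii) to a \emph{compact one-point extension property}: for any compact subfield $K \subset U$ and any 1-Lipschitz assignment of distances $d(p,\cdot)$ and of a $\pi$-value $b$ to a hypothetical new point $p$, compatible with the metric and $\pi_U$ on $K$, there exists a realizing $v \in U$ with the prescribed distances and $\pi_U(v) = b$. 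This follows from the finite version by applying it to an increasing sequence of finite nets of $K$, extracting a Cauchy subsequence via compactness, and passing to the limit in $U$. Equipped with this, carry out a back-and-forth over dense sequences $(u_n)$ in $U$ and $(x_n)$ in $X_1$: begin with the graph of $\alpha$ as the initial (countable) partial isometry and, at stage $n$, alternately extend domain and range to include $u_n$ using the compact extension property applied to $X_1$ together with the finitely many previously introduced back-step points on one side, and to their images on the other. The resulting bijection is defined on a dense subset containing $X_1$, preserves both metric and $\pi_U$, and extends by continuity to the desired $\Psi \in Aut(B)$.

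Granted the homogeneity lemma, injectivity of $\Theta$ is immediate: if compact subfields $\mm{X}_1, \mm{X}_2 \subset \mm{U}_B$ represent the same class in $\fmm{F}_B$, any witnessing isometry extends to $\Psi \in Aut(B)$ with $\psi(X_1) = X_2$. For the isometry property of $\Theta$, one direction is trivial: since $\mm{U}_B$ is itself a valid ambient $B$-field in Definition \ref{def: functional GHB distance}, we have $\dgh(\mm{X}_1, \mm{X}_2) \leq d_H^U(X_1, \psi(X_2))$ for every $\Psi \in Aut(B)$, whence $\dgh \leq d_F^B$ after taking the infimum. For the converse, given $\epsilon > \dgh(\mm{X}_1, \mm{X}_2)$, select a witnessing ambient $B$-field $\mm{Z}$ with embeddings $\Phi\colon \mm{X}_1 \dto \mm{Z}$, $\Psi\colon \mm{X}_2 \dto \mm{Z}$ of Hausdorff distance less than $\epsilon$, and compose with a universal embedding $\mm{Z} \hookrightarrow \mm{U}_B$; this produces embeddings $\iota_1', \iota_2'$ of $\mm{X}_1, \mm{X}_2$ into $\mm{U}_B$ at Hausdorff distance less than $\epsilon$. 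Homogeneity applied to the pairs (inclusion, $\iota_1'$) and (inclusion, $\iota_2'$) yields $\Phi', \Psi' \in Aut(B)$ with $\Phi'(X_1) = \iota_1'(X_1)$ and $\Psi'(X_2) = \iota_2'(X_2)$; setting $\Xi := (\Phi')^{-1} \circ \Psi'$ and using that $\Phi'$ is a $U$-isometry gives $d_H^U(X_1, \xi(X_2)) < \epsilon$, so $d_F^B(\mm{X}_1, \mm{X}_2) \leq \epsilon$.

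The main obstacle is the homogeneity lemma. Theorem \ref{thm:urysohnmap}(iii) supplies only the finite case, so both the compact one-point extension property and the careful ordering of the back-and-forth (to maintain compatibility with the prescribed $\alpha$ while approaching surjectivity on the dense sequence in $U$) need to be engineered. The subtle point is that every back-step extension must be performed with respect to the entire compact domain $X_1$ together with the finitely many auxiliary points; otherwise, a subsequent forth-step involving some $x_n$ could conflict with the distances already frozen in for a previously chosen back-step point, and the partial isometry would fail to extend.
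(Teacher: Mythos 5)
Your architecture coincides with the paper's: the reduction $\dgh(\mm{X},\mm{Y})=\inf_{\Phi,\Psi} d_H^U(\phi(X),\psi(Y))$ obtained by pushing an arbitrary ambient $\mm{Z}$ into $\mm{U}_B$ via universality is Lemma \ref{GH in U}, your compact one-point extension property is Lemma \ref{compact injectivity}, your homogeneity lemma is Proposition \ref{compact homogeneity} (proved there by exactly the back-and-forth you describe), and the final deduction is the same. You are also right that the homogeneity lemma is the technical core and that the finite statement of Theorem \ref{thm:urysohnmap}(iii) does not suffice on its own.

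However, one step fails as you state it: in the compact one-point extension property you apply the finite extension property to an increasing sequence of finite nets $A_n$ of $K$ and then propose ``extracting a Cauchy subsequence via compactness.'' There is no compactness to invoke here. The realizing points $v_n$ live in $U$, which is not compact (it contains an isometric copy of every Polish $B$-field, and bounded subsets of $U$ are far from totally bounded), and compactness of $K$ constrains only the nets, not the $v_n$; each $v_n$ is merely required to realize the prescribed distances to $A_n$, and two such realizations for the same net can be far apart. So the sequence $(v_n)$ need not have any convergent subsequence. The paper's Lemma \ref{compact injectivity} repairs precisely this point: at stage $n+1$ it adjoins the previously constructed point $u_n$ to the finite configuration $A_{n+1}\sqcup\{a^\ast\}$ and prescribes the new distance
\begin{equation*}
d'_\ast(a^\ast,u_n):=\sup_{b\in A_{n+1}}\bigl|d_\ast(a^\ast,b)-d_U(u_n,b)\bigr|\leq 2^{-n},
\end{equation*}
checks that this is an admissible metric extension, and only then invokes the finite one-point extension property; this forces $d_U(u_{n+1},u_n)\leq 2^{-n}$ \emph{by construction}, so the sequence is Cauchy and converges by completeness of $U$. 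With that modification your argument is exactly the paper's proof.
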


We provide a proof at the end of this section after establishing some results needed for the argument.

\begin{lemma} \label{GH in U}
Let $\mm{X}=(X,B,\pi_X)$ and $\mm{Y}=(Y,B,\pi_Y)$ be compact $B$-fields and $\mm{U}_B=(U,B,\pi_U)$ be the Urysohn $B$-field. Then,
\begin{equation*}
    \dgh(\mm{X}, \mm{Y}) = \inf_{\Phi,\Psi} d^U_H(\phi(X),\psi (Y)),
\end{equation*}
where the infimum is taken over all isometric embeddings $\Phi\colon \mm {X} \dto \mm{U}_B$ and $\Psi \colon \mm{Y} \dto \mm{U}_B$ over $B$. The corresponding result for the Gromov-Prokhorov distance also holds.
\end{lemma}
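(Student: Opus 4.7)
The plan is to establish both directions of the equality, with the heavy lifting coming from the universality of the Urysohn field (Theorem \ref{thm:urysohnmap}(ii)). The inequality $\dgh(\mm{X},\mm{Y}) \le \inf_{\Phi,\Psi} d_H^U(\phi(X),\psi(Y))$ is immediate from Definition \ref{def: functional GHB distance}: any pair of isometric embeddings into $\mm{U}_B$ is admissible, so restricting the ambient $B$-field to $\mm{U}_B$ can only raise the infimum.

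For the reverse inequality, I would fix $\epsilon > 0$ and use the definition of $\dgh$ to select a $B$-field $\mm{Z}$ together with isometric embeddings $\Phi \colon \mm{X} \dto \mm{Z}$ and $\Psi \colon \mm{Y} \dto \mm{Z}$ over $B$ satisfying $d_H^Z(\phi(X),\psi(Y)) < \dgh(\mm{X},\mm{Y}) + \epsilon$. By the paper's standing convention, $\mm{Z}$ is a $B$-field over a Polish space, so Theorem \ref{thm:urysohnmap}(ii) supplies an isometric embedding $\Lambda \colon \mm{Z} \dto \mm{U}_B$ over $B$. The compositions $\Lambda \circ \Phi$ and $\Lambda \circ \Psi$ are then isometric embeddings of $\mm{X}$ and $\mm{Y}$ into $\mm{U}_B$ over $B$, and since isometric embeddings preserve Hausdorff distance between subsets of the image, we have $d_H^U(\lambda\phi(X), \lambda\psi(Y)) = d_H^Z(\phi(X),\psi(Y)) < \dgh(\mm{X},\mm{Y}) + \epsilon$. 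Letting $\epsilon \to 0$ gives the desired bound.

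The Gromov-Prokhorov analogue follows the same template essentially unchanged. Replacing the Hausdorff distance between $\phi(X)$ and $\psi(Y)$ with the Prokhorov distance between the push-forward measures $\phi_\ast \mu_X$ and $\psi_\ast \mu_Y$, the only additional input required is that isometric embeddings preserve the Prokhorov distance between push-forwards, a standard fact since the open $\epsilon$-thickenings used in the Prokhorov definition correspond under isometric embedding. I expect no serious obstacle in either case; the only point meriting care is verifying that the ambient $\mm{Z}$ chosen to realize the infimum (for instance, the amalgamated or connecting fields of Lemmas \ref{lem:gluing_fields} and \ref{lem:connect_by_relation}) is Polish so that Theorem \ref{thm:urysohnmap}(ii) genuinely applies, and this is ensured by those lemmas.
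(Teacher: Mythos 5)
Your proposal is correct and follows essentially the same argument as the paper: the easy inequality from the definition of $\dgh$, and the reverse inequality by embedding the ambient $B$-field $\mm{Z}$ into $\mm{U}_B$ via universality and noting that isometric embeddings preserve the Hausdorff (resp.\ Prokhorov) distance. Your added remark about verifying that $\mm{Z}$ is Polish is a reasonable point of care but introduces nothing beyond the paper's standing conventions.
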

\begin{proof}
The inequality $ \dgh(\mm{X}, \mm{Y}) \leq \inf_{\Phi,\Psi} d^U_H(\phi(X),\psi (Y))$ follows from the definition of $\dgh$. To prove the reverse inequality, let $\epsilon >0$. We show that there are isometric embeddings  $\Phi \colon \mm{X} \dto \mm{U}_B$ and $\Psi \colon \mm{Y} \dto \mm{U}_B$ such that 
\begin{equation}
d^U_H(\phi(X), \psi (Y)) \leq \dgh(\mm{X}, \mm{Y}) +\epsilon \,.
\end{equation}
By definition of the Gromov-Hausdorff distance, there is a $B$-field $\mm{Z}$ and isometric embeddings $\Phi' \colon \mm{X} \dto \mm{Z}$ and $\Psi' \colon \mm{Y} \dto \mm{Z}$ over $B$ such that
\begin{equation}
d^Z_H(\phi'(X), \psi'(Y)) \leq \dgh(\mm{X},\mm{Y}) +\epsilon \,.
\end{equation}
By the universality of $\mathscr{U}_B$, there is an isometric embedding $\Lambda \colon \mm{Z} \dto \mm{U}_B$ over $B$. Letting $\Phi =\Lambda\circ\Phi'$ and $\Psi=\Lambda\circ\Psi'$, we have
\begin{equation} \label{eq:ghineq}
d^U_H(\phi(X), \psi(Y)) = d^Z_H(\phi'(X), \psi'(Y)) \leq \dgh (\mm{X},\mm{Y}) +\epsilon \,.
\end{equation}
Clearly, the same inequality holds if the left-hand side of \eqref{eq:ghineq} is replaced with the infimum over $\Phi$ and $\Psi$. Taking the limit as $\epsilon \to 0$, we obtain the desired inequality.

\end{proof}

\begin{lemma}\label{compact injectivity}
Let $\mm{U}_B$ be a Urysohn field over $B$, $A \subseteq U$ a compact subset, and $f \colon A^\ast \to B$ a field defined on a one-point metric extension $A^\ast= A \sqcup \{a^\ast\}$ satisfying $f_A = \pi|_A$. Then, there exists an isometric embedding $\imath \colon A^\ast \to U$ over $B$ such that the restriction $\imath|_A$ is the  inclusion map and $\pi \circ \imath = f$. 
\end{lemma}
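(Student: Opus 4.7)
The plan is to reduce the lemma to producing a single point $u \in U$ with $d_U(u, a) = g(a) := d_{A^\ast}(a^\ast, a)$ for every $a \in A$ and $\pi_U(u) = b := f(a^\ast)$; once such a $u$ is in hand, defining $\imath|_A$ to be the inclusion and $\imath(a^\ast) := u$ gives the required isometric embedding over $B$. I will obtain $u$ as the limit of a Cauchy sequence in $U$ built by iterating the finite Urysohn extension property (Theorem \ref{thm:urysohnmap}). A crucial preliminary observation, using that $a^\ast \notin A$, is that $g$ is continuous and strictly positive on the compact set $A$, so $m := \min_A g > 0$.

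Fix an increasing sequence $A_1 \subseteq A_2 \subseteq \cdots \subseteq A$ of finite subsets with $A_n$ an $\epsilon_n$-net in $A$ for $\epsilon_n := 2^{-n}$; then $\bigcup_n A_n$ is dense in $A$ and $\sum_n \epsilon_n < \infty$, and by reindexing we may assume $\epsilon_1 < m/2$. I then construct $u_n \in U$ inductively so that
\begin{enumerate}[(a)]
  \item $d_U(u_n, a) = g(a)$ for every $a \in A_n$,
  \item $\pi_U(u_n) = b$,
  \item $d_U(u_n, u_{n-1}) = 2\epsilon_{n-1}$ for $n \geq 2$.
\end{enumerate}
The base case is a direct application of Theorem \ref{thm:urysohnmap} to the finite subspace $A_1 \subseteq U$ together with the one-point extension $A_1 \sqcup \{a^\ast\}$ inherited from $A^\ast$ and $\phi(a^\ast) := b$. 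For the inductive step, given $u_n$, consider the finite subspace $A_{n+1} \cup \{u_n\} \subseteq U$ and extend it by a new point $u^\ast$ with $d(u^\ast, a) := g(a)$ for $a \in A_{n+1}$, $d(u^\ast, u_n) := 2\epsilon_n$, and $\phi(u^\ast) := b$; Theorem \ref{thm:urysohnmap} then realizes $u^\ast$ as a point $u_{n+1} \in U$.

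The main obstacle is checking that this one-point extension really is a $B$-field, i.e., that the declared distances satisfy all triangle inequalities and $\phi$ is $1$-Lipschitz. The inequalities internal to $A_{n+1} \cup \{u^\ast\}$ are inherited from the ambient metric on $A^\ast$, and the $1$-Lipschitz property of $\phi$ is immediate from that of $f$. The remaining inequalities, coupling $u^\ast$ and $u_n$, amount to the requirement that $2\epsilon_n$ lie in
\[
\bigl[\,\max_{a \in A_{n+1}} |d_U(u_n, a) - g(a)|,\ \min_{a \in A_{n+1}}(d_U(u_n, a) + g(a))\,\bigr].
\]
For $a \in A_n$, hypothesis (a) gives $d_U(u_n, a) = g(a)$; for $a \in A_{n+1} \setminus A_n$, choosing $a' \in A_n$ within $\epsilon_n$ of $a$ and combining the $1$-Lipschitz properties of $d_U(u_n, \cdot)$ and $g$ yields $|d_U(u_n, a) - g(a)| \leq 2\epsilon_n$. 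Hence the left endpoint of the interval is at most $2\epsilon_n$, while the right endpoint is at least $2m - 2\epsilon_n > 2\epsilon_n$ by our choice of indexing, so the interval is non-empty and contains $2\epsilon_n$.

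Finally, (c) and the summability $\sum_n 2\epsilon_n < \infty$ make $(u_n)$ a Cauchy sequence in the complete Polish space $U$, giving $u_n \to u \in U$. The continuity of $d_U(\cdot, a)$ and of $\pi_U$ propagate (a) and (b) to the limit, so $d_U(u, a) = g(a)$ for every $a \in \bigcup_n A_n$ and $\pi_U(u) = b$. Density of $\bigcup_n A_n$ in $A$ together with the continuity of $g$ extends the distance identity to all of $A$, completing the construction of $u$ and thus of $\imath$.
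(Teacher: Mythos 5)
Your proof is correct and follows essentially the same strategy as the paper's: reduce the lemma to producing a single point $u\in U$ realizing the prescribed distances to $A$ and the prescribed value of $\pi_U$, and construct $u$ as the limit of a Cauchy sequence obtained by iterating the finite one-point extension property over an increasing sequence of finite nets in $A$. The differences are only in the bookkeeping — you insist on exact distance realization on each net $A_n$ and a fixed gap $2\epsilon_n$ to the previous point (justified via $m=\min_A g>0$), whereas the paper allows a $2^{-n}$ error uniformly over all of $A$ and joins $u_{n+1}$ to $u_n$ at the minimal consistent distance.
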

\begin{proof}
It suffices to construct a sequence $(u_n)$ in $U$ satisfying
\begin{enumerate}[(i)]
\item $\pi_U(u_n) = f(a^\ast)$, for $n \geq 1$;
\item $|d_\ast(a^\ast, a)-d_U(u_n,a)|\leq 2^{-n}$, $\forall a\in A$, where $d_\ast$ is the metric on $A^\ast$;
\item $d_U(u_n,u_{n+1})\leq 2^{-n}$, for $n \geq 1$.
\end{enumerate}
Indeed, letting $u=\lim_{n\to \infty} u_n \in U$, define $\imath \colon A^\ast \to B$ as the identity on $A$ and $\imath (a^\ast) =u$. The map $\imath$ gives the desired one-point extension. Now we proceed to the construction of the sequence $(u_n)$.

Let $A_n$ be an ascending sequence of finite subsets of $A$, where $A_n$ is a $2^{-(n+1)}$-net in $A$, for $n \geq 1$. (This means that the balls of radius $2^{-(n+1)}$ centered at the points in $A_n$ cover $A$.) Let $D_1=A_1$ and denote by $D_1^\ast = D_1\sqcup\{a^\ast\}$, the one-point metric extension of $D_1$ induced by $(A^\ast, d_\ast)$. Since $\mm{U}_B$ is Urysohn, applying the one-point extension property to the field $f|_{D_1^\ast}$, we obtain an isometric embedding $\imath_1 \colon D_1^\ast \to U$ such that $\pi_U \circ \imath_1 = f|_{D_1^\ast}$. Defining $u_1 = \imath_1 (a^\ast) \in U$, we have that $\pi_U (u_1) = f(a^\ast)$ and $d_\ast (a^\ast, a) = d_U (u_1, a)$, for any $a \in A$, so $u_1$ satisfies (i) and (ii) above. Condition (iii) is empty at this stage of the construction. Inductively, suppose that we have constructed  $u_j$, $i \leq j \leq n$, with the desired properties and let
\begin{equation}
D_{n+1} = A_{n+1} \cup \{u_n\} \quad \text{and} \quad D_{n+1}^\ast=  D_{n+1} \sqcup\{a^\ast\}.
\end{equation}
Using the notation $A_{n+1}^\ast = A_{n+1} \cup \{a^\ast\}$, define a metric $d'_\ast \colon D_{n+1}^\ast \times D_{n+1}^\ast \to \real$, as follows: $d'_\ast$ coincides with $d_\ast$ on $A_{n+1}^\ast \times A_{n+1}^\ast $, $d'_\ast (a,u_n) = d_U (a, u_n)$, for every $a \in A_{n+1}$, and
\begin{equation}
d'_\ast(a^\ast, u_n):=\sup_{b\in A_{n+1}}|d_\ast(a^\ast,b)-d_U(u_n,b)|.
\label{eq:metric}
\end{equation}
Define a field $f' \colon D_{n+1}^\ast \to B$ by $f'|_{D_{n+1}} = \pi_U|_{D^{n+1}}$ and $f'(a^\ast) = f(a^\ast)$. Applying the one-point extension property to $f'$ we obtain an isometric embedding $\imath_{n+1} \colon D_{n+1}^\ast \to U$ satisfying $f' = \pi_U\circ \imath_{n+1}$.

Let $u_{n+1} = \imath_{n+1} (a^\ast) \in U$. By construction, $\pi_U(u_{n+1}) = f'(a_\ast) = f (a_\ast)$, so requirement (i) is satisfied. Moreover, $d_U(u_{n+1},b) = d'_\ast(a^\ast, b)= d_\ast(a^\ast, b)$, for any $b \in A_{n+1}$. Since $A_{n+1}$ is a $2^{-(n+2)}$-net in $A$, for each $a \in A$, we can pick $b \in A_{n+1}$ such that $d(a,b) \leq 2^{-(n+2)}$. Then, we have
\begin{equation}
\begin{split}
|d_\ast (a^\ast, a) - d_U(u_{n+1},a)| &\leq |d_\ast (a^\ast, a) - d_U(u_{n+1},b)| + | d_U(u_{n+1},b) - d_U(u_{n+1},a)| \\
&= |d_\ast (a^\ast, a) - d_\ast (a^\ast,b)| + | d_U (u_{n+1},b) - d_U(u_{n+1},a)| \\
&\leq d_\ast(a,b) + d_U (a,b) = 2 d_U (a,b) \leq 2^{-(n+1)} \,.
\end{split}
\end{equation}
This verifies property (ii). By the inductive hypothesis, we also have $|d_\ast (a^\ast, a) - d_U(u_n,a)| \leq 2^{-n}$, for any $a \in A$. Thus, by \eqref{eq:metric},
\begin{equation}
d(u_{n+1}, u_n) = d'_\ast (a^\ast, u_n) = \sup_{b\in A_{n+1}}|d_\ast(a^\ast,b)-d_U(u_n,b)| \leq 2^{-n}.
\end{equation}
This concludes the proof.
\end{proof}

A {\em partial isometric matching} of $\pi \colon X \to B$ is a bijective isometry $\phi \colon A \to A'$ between  subspaces of $X$ that satisfies $\pi|_{A'} \circ \phi = \pi|_A$.

\begin{proposition}\label{compact homogeneity}
If $\mm{U}_B$ is a Urysohn field and $A, B \subseteq U$ are compact subsets, then any partial isometric matching $\phi \colon A \to B$ of $\mm{U}_B$ extends to an automorphism of $\mm{U}_B$.
\end{proposition}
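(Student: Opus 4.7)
The plan is a back-and-forth construction that produces an increasing chain of partial isometric matchings whose union is a bijective isometry over $B$ between dense subsets of $U$, and then extends this matching by continuity. The technical input at each stage is the compact one-point extension property given by Lemma \ref{compact injectivity}; the finite extension property of Theorem \ref{thm:urysohnmap}(iii) would not suffice, since the initial domain $A$ is only compact, not finite.

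To avoid notational conflict with the target space $B$, write $\phi \colon A \to A'$. Fix a countable dense sequence $\{u_n\}_{n \geq 1}$ in $U$ (using that $U$ is Polish) and set $\phi_0 := \phi$. The inductive goal is to build partial isometric matchings $\phi_0 \subseteq \phi_1 \subseteq \phi_2 \subseteq \cdots$ of $\mm{U}_B$ of the form $\phi_k \colon A \cup F_k \to A' \cup F_k'$ with $F_k, F_k' \subseteq U$ finite, arranging that $u_n$ lies in the domain of $\phi_{2n-1}$ and in the range of $\phi_{2n}$.

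For the forth step, suppose $\phi_{2n-2}$ has been built and $u_n \notin D := A \cup F_{2n-2}$ (otherwise set $\phi_{2n-1} := \phi_{2n-2}$). Let $D' := A' \cup F_{2n-2}'$, and form the one-point metric extension $D' \cup \{\ast\}$ of $D'$ by declaring $d(\phi_{2n-2}(x), \ast) := d_U(u_n, x)$ for $x \in D$; the triangle inequality is inherited from $U$ through the isometry $\phi_{2n-2}$. Define $f \colon D' \cup \{\ast\} \to B$ by $f|_{D'} := \pi|_{D'}$ and $f(\ast) := \pi(u_n)$; this is $1$-Lipschitz since $\pi$ is $1$-Lipschitz and $\phi_{2n-2}$ preserves $\pi$. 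Because $D'$ is compact (a finite union of a compact set and finitely many points), Lemma \ref{compact injectivity} yields an isometric embedding $\iota \colon D' \cup \{\ast\} \to U$ restricting to the inclusion on $D'$ and satisfying $\pi \circ \iota = f$. Put $v_n := \iota(\ast)$ and $\phi_{2n-1} := \phi_{2n-2} \cup \{(u_n, v_n)\}$. Then $v_n \notin D'$, for if $v_n = \phi_{2n-2}(u)$ with $u \in D$, matching distances forces $d_U(u, u_n) = 0$ and hence $u_n \in D$, a contradiction; so $\phi_{2n-1}$ is a bijective isometry over $B$. The back step is symmetric: since $A \cup F_{2n-1}$ is compact, apply the forth step to $\phi_{2n-1}^{-1}$ to produce $\phi_{2n} \supseteq \phi_{2n-1}$ with $u_n$ in its range.

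The union $\phi_\infty := \bigcup_n \phi_n$ is a bijective isometry over $B$ between two dense subsets of $U$. Since $U$ is complete, $\phi_\infty$ extends uniquely by uniform continuity to an isometric map $\Phi \colon U \to U$; its image is closed and contains a dense set, hence equals $U$, so $\Phi$ is a bijective isometry. Continuity of $\pi$ yields $\pi \circ \Phi = \pi$, so $\Phi$ is the desired automorphism extending $\phi$. The only delicate point is the verification at each inductive stage that the hypotheses of Lemma \ref{compact injectivity} hold for the extended metric and for $f$, but both checks reduce to the fact that $\phi_{2n-2}$ is itself a partial isometric matching over $B$; this is the main obstacle, and it is routine.
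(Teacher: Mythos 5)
Your proof is correct and takes essentially the same approach as the paper: the paper's own argument also combines the compact one-point extension property of Lemma \ref{compact injectivity} with a back-and-forth construction over countable dense sets, deferring the details to a cited reference. You have simply written out that back-and-forth explicitly, including the continuity extension at the end, and your verifications (compactness of the intermediate domains, $1$-Lipschitzness of the extended field, injectivity of each step) are all sound.
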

\begin{proof}
Let $C=\{x_1, x_2, \ldots \}$ and $C'=\{x'_1, x'_2, \ldots\}$ be countable dense sets in $U\setminus A$ and $U \setminus B$, respectively. Using the one-point compact extension property established in Lemma \ref{compact injectivity} and a back-and-forth argument applied to $C$  and $C'$ as in \cite[Section~3.1]{huvsek2008urysohn}, $\phi$ can be extended to an isometry of $\mm{U}_B$.
\end{proof}

\begin{proof}[Proof of Theorem \ref{thm: F_B = F_I}]
For any compact $B$-fields $\mm{X}$ and $\mm{Y}$, Lemma \ref{GH in U} shows that 
\begin{equation}
    \dgh(\mm{X},\mm{Y})= \inf_{\Phi, \Psi} d^{U}_H(\phi(X),\psi(Y)),
\end{equation}
where $\Phi \colon \mm{X} \dto \mm{U}_B$ and $\Psi \colon \mm{Y} \dto \mm{U}_B$ are isometric embeddings. By Proposition \ref{compact homogeneity}, any other isometric embeddings $\Phi' \colon \mm{X} \dto \mm{U}_B$ and $\Psi' \colon \mm{Y} \dto \mm{U}_B$ differ from $\Phi$ and $\Psi$ by the action of automorphisms of $\mm{U}_B$. This proves the claim.
\end{proof}


\section{Gromov-Prokhorov Distance for Metric-Measure Fields} \label{sec:gp}

As in Section \ref{sec:gh}, for completeness, this section discusses field versions of analogous results for metric-measure spaces (cf.\,\cite{loehr2013equivalence}, \cite{janson2020gromov}, \cite[3~1/2]{gromov2007metric}).Recall that a metric measure space ($mm$-space) is a triple $(X, d_X, \mu_X)$, where $(X, d_X)$ is a Polish metric space and $\mu_X$ is a Borel probability measure on $X$. The next definition introduces an analogue for fields.

\begin{definition}
Let $B$ be a Polish metric space. A {\em metric measure field}, or $mm$-field, over $B$ is a quadruple $\mm{X} = (X,d_X,\pi_X,\mu_X)$, where $(X,d_X)$ is a Polish metric space, $\pi_X \colon X \to B$ is a $1$-Lipschitz map, and $\mu_X$ is a Borel probability measure on $X$. Two $mm$-fields are {\em isomorphic} if there is a measure-preserving isometry between the underlying $B$-fields.
\end{definition}
We abuse notation an also denote by $\mm{X}$ the $B$-field underlying $\mm{X} = (X,d_X,\pi_X,\mu_X)$.

\begin{definition} (Gromov-Prokhorov Distance for Fields) \label{def: functional GPB distance}
Let $\mm{X}=(X,d_X,\pi_X,\mu_X)$ and $\mm{Y}=(Y,d_Y,\pi_Y,\mu_Y)$ be $mm$-fields over $B$. The {\em Gromov-Prokhorov distance} is defined by
\begin{equation*}
    \dgp(\mm{X}, \mm{Y}) := \inf_{Z,\Phi,\Psi} \dpr^Z(\phi_*(\mu_X),\psi_*(\mu_Y)),
\end{equation*}
where the infimum is taken over all $B$-fields $\mm{Z}$ and isometric embeddings $\Phi \colon \mm{X} \dto \mm{Z}$ and $\Psi\colon \mm{Y} \dto \mm{Z}$ over $B$. 
\end{definition}

\begin{proposition}\label{prop:gp_triangle}
    Let $\mm{X}$, $\mm{Y}$, and $\mm{W}$ be $mm$-fields over $B$. Then, 
    $$\dgp(\mm{X},\mm{W}) \leq \dgp(\mm{X},\mm{Y})+\dgp(\mm{Y},\mm{W}). $$
\end{proposition}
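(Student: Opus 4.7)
The plan is to mirror the argument used for the Gromov-Hausdorff case in Proposition \ref{prop:gh_triangle}, replacing Hausdorff distance with Prokhorov distance and using the Gluing Lemma (Lemma \ref{lem:gluing_fields}) together with the ordinary triangle inequality for the Prokhorov distance in a common ambient space. Fix $r > \dgp(\mm{X},\mm{Y})$ and $s > \dgp(\mm{Y},\mm{W})$. By definition of $\dgp$, there exist $B$-fields $\mm{Z}_1$, $\mm{Z}_2$ and isometric embeddings $\Lambda_1 \colon \mm{X} \dto \mm{Z}_1$, $\Phi \colon \mm{Y} \dto \mm{Z}_1$, $\Psi \colon \mm{Y} \dto \mm{Z}_2$, $\Lambda_2 \colon \mm{W} \dto \mm{Z}_2$ such that
\[
\dpr^{Z_1}\bigl((\lambda_1)_\ast \mu_X, \phi_\ast \mu_Y\bigr) < r \quad \text{and} \quad \dpr^{Z_2}\bigl(\psi_\ast \mu_Y, (\lambda_2)_\ast \mu_W\bigr) < s.
\]

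Next, I would apply Lemma \ref{lem:gluing_fields} to glue $\mm{Z}_1$ and $\mm{Z}_2$ along $\mm{Y}$, producing a $B$-field $\mm{Z}$ in which both $\mm{Z}_1$ and $\mm{Z}_2$ sit as isometric subfields in a compatible way. The crucial point is that the two embeddings of $\mm{Y}$, namely $\phi$ into $\mm{Z}_1 \hookrightarrow \mm{Z}$ and $\psi$ into $\mm{Z}_2 \hookrightarrow \mm{Z}$, coincide in $\mm{Z}$ by construction of the amalgamation. Consequently, inside $\mm{Z}$ the pushforward $\phi_\ast \mu_Y$ agrees with $\psi_\ast \mu_Y$, so the bounds above translate directly into
\[
\dpr^{Z}\bigl((\lambda_1)_\ast \mu_X, \phi_\ast \mu_Y\bigr) < r \quad \text{and} \quad \dpr^{Z}\bigl(\phi_\ast \mu_Y, (\lambda_2)_\ast \mu_W\bigr) < s,
\]
where I use that the Prokhorov distance is preserved under isometric inclusions of the ambient space.

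Applying the triangle inequality for the Prokhorov metric on the Polish space $(Z,d_Z)$ then yields
\[
\dpr^{Z}\bigl((\lambda_1)_\ast \mu_X, (\lambda_2)_\ast \mu_W\bigr) < r + s.
\]
Since $\Lambda_1 \colon \mm{X} \dto \mm{Z}$ and $\Lambda_2 \colon \mm{W} \dto \mm{Z}$ are isometric embeddings of $mm$-fields, the definition of $\dgp$ gives $\dgp(\mm{X},\mm{W}) < r + s$. Letting $r \downarrow \dgp(\mm{X},\mm{Y})$ and $s \downarrow \dgp(\mm{Y},\mm{W})$ finishes the proof.

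The only point that requires a little care, and which I expect to be the main (albeit mild) obstacle, is justifying that the two pushforwards of $\mu_Y$ inside $\mm{Z}$ really do coincide: this is precisely the content of the identification used in the Gluing Lemma, where the equivalence relation $\phi(y) \sim \psi(y)$ makes the images of $y \in Y$ under the two routes into $\mm{Z}$ the same point, so $\phi_\ast \mu_Y = \psi_\ast \mu_Y$ as Borel measures on $Z$. Everything else is a direct transcription of the Gromov-Hausdorff triangle inequality argument.
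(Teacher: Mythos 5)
Your proof is correct and is exactly the argument the paper intends: the paper's own proof of Proposition \ref{prop:gp_triangle} simply says it is identical to that of Proposition \ref{prop:gh_triangle} with the Hausdorff distance replaced by the Prokhorov distance, which is precisely the gluing-plus-triangle-inequality argument you spell out. Your attention to the identification $\phi_\ast \mu_Y = \psi_\ast \mu_Y$ in the amalgamation is the right point to check and is handled correctly.
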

\begin{proof}
    The proof is identical to that of Proposition \ref{prop:gh_triangle} replacing the Hausdorff distance by the Prokhorov distance.
\end{proof}

Given $(X, d_X, \muX)$ and $(Y, d_Y, \muY)$, let $\cXY$ denote the set of all couplings between $\muX$ and $\muY$; that is, the collection of all probability measures $\mu$ on $X \times Y$ that marginalize to $\mu_X$ and $\mu_Y$, respectively. Our next goal is to characterize the Gromov-Prokhorov distance between $B$-fields in terms of couplings.

\begin{definition}[$\epsilon$-couplings]
    Let $\mm{X}=(X,d_X,\pi_X,\mu_X)$ and $\mm{Y}=(Y,d_Y,\pi_Y,\mu_Y)$ be $mm$-fields over $B$ and $\epsilon \geq 0$. A coupling $\mu \in \cXY$ is called an $\epsilon$-\emph{coupling} if there is a Borel subset $R$ of $X \times Y$ such that $\mu(R) \geq 1-\epsilon/2$ and $\dis_{\pi_X,\pi_Y}(R) \leq \epsilon$. (Note: $R$ is not assumed to be a correspondence.)
\end{definition}

The following is an analogue of Lemma \ref{lem:connect_hausdorff} for the Prokhorov distance.

\begin{lemma}\label{lem:connect_prokhorov}
    Let $\mm{X}$ and $\mm{Y}$ be $mm$-fields over $B$ and $r>0$. Suppose that $\mu$ is a $2r$-coupling between $\mm{X}$ and $\mm{Y}$ with respect to a Borel relation $R \subseteq X \times Y$ and let $\mm{Z}:=\mm{X} \coprod_{R,r} \mm{Y}$. Then, $\dpr^Z(\mu_X,\mu_Y) \leq r$.
\end{lemma}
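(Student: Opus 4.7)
The plan is to bound $\dpr^Z(\mu_X,\mu_Y)$ by testing the Prokhorov inequality on an arbitrary Borel set $A \subseteq Z$. The two hypotheses will play distinct roles: the distortion bound $\dis_{\pi_X,\pi_Y}(R) \leq 2r$ first ensures that $\mm{Z} = \mm{X} \coprod_{R,r} \mm{Y}$ is well defined via Definition \ref{def:connecting_fields} (since $r \geq \dis_{\pi_X,\pi_Y}(R)/2$), and then, packaged in the formula for $d_Z$, it will control the separation of $R$-paired points inside $Z$; the mass bound $\mu(R) \geq 1-r$ will control the leftover measure off of $R$.

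The key observation, which I would read directly off the definition of $d_Z$ by taking $(x',y')=(x,y)$ in the infimum, is that every $(x,y) \in R$ satisfies $d_Z(x,y) \leq r$. Given a Borel $A \subseteq Z$, I would then split the rectangle $(A \cap X) \times Y \subseteq X \times Y$ into its intersection with $R$ and its complement. The complement has $\mu$-mass at most $r$ by the coupling hypothesis, and every point of the $R$-part has its second coordinate in $A^r \cap Y$ by the distance bound above; taking the $Y$-marginal of the resulting inclusion $(A \cap X) \times Y \cap R \subseteq X \times (A^r \cap Y)$ yields
\[
\mu_X(A) = \mu\bigl((A \cap X) \times Y\bigr) \leq \mu_Y(A^r) + r.
\]
The symmetric argument, swapping the roles of $X$ and $Y$, produces $\mu_Y(A) \leq \mu_X(A^r) + r$, and the two inequalities together are exactly the condition $\dpr^Z(\mu_X,\mu_Y) \leq r$.

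The remaining steps are bookkeeping: $X$ and $Y$ are closed in $Z$ (points in opposite copies are separated by at least $r$), $A^r$ is open in $Z$, and $R$ is Borel by hypothesis, so every set whose mass I evaluate is legitimately Borel. I do not expect a genuine obstacle here—this argument is the measure-theoretic analogue of Lemma \ref{lem:connect_hausdorff}, with the mass defect $\mu((X \times Y) \setminus R) \leq r$ taking over the role that surjectivity of a correspondence played in the Hausdorff setting.
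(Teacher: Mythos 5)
Your proof is correct and follows essentially the same route as the paper's: both rest on the observation that $(x,y)\in R$ forces $d_Z(x,y)\le r$ (so $(A\times Y)\cap R$ lands in $Z\times A^r$), combined with the mass bound $\mu\bigl((X\times Y)\setminus R\bigr)\le r$, to get $\mu_X(A)\le \mu_Y(A^r)+r$ and its symmetric counterpart. The only cosmetic difference is that the paper works with the closed $r$-neighborhood and phrases the estimate as $\mu_Y(A^r)\ge \mu((A\times Y)\cap R)\ge \mu_X(A)-r$, which is the same inequality.
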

\begin{proof}
    Given a Borel subset $A \subseteq X$, denote by $A^r$ the closed $r$-neighborhood of $A$ in $Z$. Viewing $\mu$ as a measure on $Z \times Z$ supported on $X \times Y$, (under the inclusion $X\times Y \hookrightarrow Z \times Z$), we have
    \begin{equation}
    \mu_Y(A^r)=\mu(Z \times A^r) \geq \mu((A \times Y) \cap R) \geq \mu(A \times Y)-r = \mu_X(A)-r \,.
    \end{equation}
    Similarly, $\mu_X (A^r) \geq \mu_Y (A)-r$. Hence, $\dpr^Z(\mu_X,\mu_Y) \leq r$.
\end{proof}

\begin{theorem}\label{thm:gp_coupling}
    Let $\mm{X}$ and $\mm{Y}$ be $mm$-fields over $B$. The Gromov-Prokhorov distance between $\mm{X}$ and $\mm{Y}$ can be expressed as
    \begin{equation*}
        \dgp(\mm{X},\mm{Y}) = \frac{1}{2}\inf \, \{\epsilon>0 \colon \text{there exists an $\epsilon$-coupling $\mu \in C(\mu_X, \mu_Y)$}\}.
    \end{equation*}
\end{theorem}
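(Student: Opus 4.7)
The plan is to mirror the proof of Theorem~\ref{thm:gh_distortion} and establish two inequalities.

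For the $\leq$ direction, suppose $\mu$ is an $\epsilon$-coupling with witnessing Borel relation $R \subseteq X \times Y$. Setting $r := \epsilon/2$, the distortion bound gives $r \geq \dis_{\pi_X,\pi_Y}(R)/2$, so I can form the $B$-field $\mm{Z} := \mm{X} \coprod_{R,r} \mm{Y}$ of Definition~\ref{def:connecting_fields}. Since $\mu$ is a $(2r)$-coupling, Lemma~\ref{lem:connect_prokhorov} gives $\dpr^Z(\mu_X,\mu_Y) \leq r = \epsilon/2$, and hence $\dgp(\mm{X},\mm{Y}) \leq \epsilon/2$. Taking the infimum over all $\epsilon$ admitting an $\epsilon$-coupling yields one inequality.

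For the reverse inequality, fix $\delta > \dgp(\mm{X},\mm{Y})$ and choose a $B$-field $\mm{Z}$ together with isometric embeddings $\Phi \colon \mm{X} \dto \mm{Z}$ and $\Psi \colon \mm{Y} \dto \mm{Z}$ such that $\dpr^Z(\phi_\ast \mu_X,\psi_\ast \mu_Y) < \delta$. I invoke Strassen's coupling characterization of the Prokhorov distance on Polish spaces to obtain a coupling $\gamma \in C(\phi_\ast\mu_X,\psi_\ast\mu_Y)$ on $Z \times Z$ with $\gamma(\{(z,w) : d_Z(z,w) > \delta\}) \leq \delta$. Because $\phi$ and $\psi$ are injective isometric embeddings between Polish spaces, the product map $(\phi,\psi)$ is a Borel isomorphism onto its image, which contains $\mathrm{supp}\,\gamma$; pulling $\gamma$ back produces a coupling $\mu \in C(\mu_X,\mu_Y)$. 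Defining $R := \{(x,y) \in X\times Y : d_Z(\phi(x),\psi(y)) \leq \delta\}$, I immediately get $\mu(R) \geq 1 - \delta$. For any $(x,y),(x',y') \in R$, the triangle inequality gives $|d_X(x,x') - d_Y(y,y')| = |d_Z(\phi(x),\phi(x')) - d_Z(\psi(y),\psi(y'))| \leq 2\delta$, while $\pi_Z \circ \phi = \pi_X$, $\pi_Z \circ \psi = \pi_Y$ and the $1$-Lipschitz property of $\pi_Z$ give $d_B(\pi_X(x),\pi_Y(y)) \leq d_Z(\phi(x),\psi(y)) \leq \delta$. Hence $\dis_{\pi_X,\pi_Y}(R) \leq 2\delta$ and $\mu$ is a $(2\delta)$-coupling, so the infimum is bounded by $2\delta$. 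Letting $\delta \downarrow \dgp(\mm{X},\mm{Y})$ closes the argument.

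The single non-routine ingredient is Strassen's theorem identifying the Prokhorov distance with the infimum over couplings of a tail probability. The care point is reconciling the strict inequality $\dpr^Z < \delta$ coming from the definition of $\dgp$ with the non-strict events $\{d_Z > \delta\}$ and $\{d_Z \leq \delta\}$ used in constructing $R$; this is standard but worth flagging. The rest is a straightforward transplant of the Gromov--Hausdorff distortion calculation to the measure-theoretic setting, with the functional component of the distortion entirely controlled by the commutativity of the embeddings with the field maps.
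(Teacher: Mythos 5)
Your proposal is correct and follows essentially the same route as the paper's proof: the forward inequality via the glued field $\mm{X} \coprod_{R,r} \mm{Y}$ and Lemma \ref{lem:connect_prokhorov}, and the reverse inequality via Strassen's coupling characterization of the Prokhorov distance (the paper cites it as \cite[Theorem~11.6.2]{dudley2018real}) followed by the same distortion estimates on $R = \{(x,y) : d_Z(\phi(x),\psi(y)) \leq \delta\}$. The only cosmetic difference is that you set $r = \epsilon/2$ directly in the first direction where the paper takes $r > \epsilon/2$ and passes to the limit; both are valid.
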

\begin{proof}
    We first show that if $\epsilon>0$ and $\mu$ is an $\epsilon$-coupling between $\mm{X}$ and $\mm{Y}$, then $\dgp(\mm{X},\mm{Y}) \leq \epsilon/2$. If $r>\epsilon/2$, then $\mu$ is a $2r$-coupling between $\mm{X}$ and $\mm{Y}$. By Lemma \ref{lem:connect_prokhorov}, there exists a $B$-field $\mm{Z}$ containing $\mm{X}$ and $\mm{Y}$ such that $\dpr^Z(\mu_X,\mu_Y) \leq r$, which implies that $\dgp(\mm{X},\mm{Y}) \leq r$. Since $r>\epsilon/2$ is arbitrary, $\dgp(\mm{X},\mm{Y}) \leq \epsilon/2$. Thus, if we denote the infimum in the statement of the theorem by $\alpha$, we have that $\dgp(\mm{X},\mm{Y}) \leq \alpha/2$. We now sharpen this to an equality.
    
    Let $r>\dgp(\mm{X}, \mm{Y})$. Let us show that there is a $2r$-coupling between $\mm{X}$ and $\mm{Y}$. There exists isometric embeddings $\Phi \colon \mm{X} \dto \mm{Z}$ and $\Psi\colon \mm{Y} \dto \mm{Z}$ such that $\dpr^Z(\phi_*(\mu_X),\psi_*(\mu_Y)) < r$. For $x \in X$ and $y \in Y$, abusing notation, write $d_Z(x,y)$ for $d_Z(\phi(x),\psi(y))$, and $\pi_Z(x)$ and $\pi_Z(y)$ for $\pi_Z(\phi(x))$ and $\pi_Z(\psi(y))$, respectively. By \cite[Theorem~11.6.2]{dudley2018real}, there exists a coupling $\nu$ between $\phi_*(\mu_X)$ and $\psi_*(\mu_Y)$ such that
    \begin{equation}
    \nu(\{(w,z) \in Z \times Z\colon d_Z(w,z)> r \}) \leq r.
    \end{equation}
    Since $\nu$ is supported in $\phi(X) \times \psi(Y)$, it induces a coupling $\mu$ between $\mu_X$ and $\mu_Y$ such that
    \begin{equation} \label{eq:dprob}
    \mu (\{(x,y) \in X \times Y\colon d_Z(x,y)> r \}) \leq r.
    \end{equation}
    Let $R:=\{(x,y) \in X \times Y\colon d_Z(x,y)\leq  r \}$. By \eqref{eq:dprob}, $\mu(R) \geq 1-r$. Moreover, if $(x,y),(x',y') \in R$, then
    \begin{equation}
            |d_X(x,x')-d_Y(y,y')| \leq d_Z(x,y)+d_Z(x',y') \leq 2r 
    \end{equation}
    and
    \begin{equation}
        |d_B(\pi_X(x),\pi_Y(y)| =d_B(\pi_Z(x),\pi_Z(y)) \leq d_Z(x,y) \leq r.
    \end{equation}
    Hence, $\dis_{\pi_X,\pi_Y}(R) \leq 2r$. This implies that $\dgp(\mm{X},\mm{Y})=\alpha/2$, concluding the proof.
\end{proof}

The next proposition establishes the existence of optimal couplings for the Gromov-Prokhorov distance between compact fields.

\begin{proposition}\label{prop:gp_realization}
    Let $\mm{X}$ and $\mm{Y}$ be compact $mm$-fields over $B$. There exists an $\epsilon$-coupling between $\mm{X}$ and $\mm{Y}$ such that $\dgp(\mm{X},\mm{Y})=\epsilon/2$.
\end{proposition}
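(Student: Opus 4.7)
The plan is to follow the same compactness strategy used in Proposition \ref{prop:gh_realization}, but now simultaneously in two topologies: weak convergence of couplings and Hausdorff convergence of the supporting relations. The goal is to extract a limiting pair $(\mu, R)$ that witnesses the infimum in Theorem \ref{thm:gp_coupling}.

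First, by Theorem \ref{thm:gp_coupling}, choose a sequence of $\epsilon_n$-couplings $\mu_n \in C(\muX, \muY)$ with Borel witnesses $R_n \subseteq X \times Y$ satisfying $\mu_n(R_n) \geq 1 - \epsilon_n/2$ and $\dis_{\piX,\piY}(R_n) \leq \epsilon_n$, where $\epsilon_n \downarrow 2\,\dgp(\mm{X},\mm{Y})$. As in the proof of Proposition \ref{prop:gh_realization}, replace $R_n$ by its closure: this leaves the distortion unchanged and only increases $\mu_n(R_n)$, so $R_n$ may be assumed closed.

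Next I would extract a simultaneous limit. Since $X \times Y$ is compact, the set of Borel probability measures on $X \times Y$ is weakly compact by Prokhorov's theorem, so passing to a subsequence gives $\mu_n \to \mu$ weakly. Weak convergence of couplings preserves marginals (push-forwards under the continuous projections are weakly continuous), so $\mu \in C(\muX, \muY)$. Passing to a further subsequence, by \cite[Theorem~7.3.8]{burago2001course} the space of closed subsets of $X \times Y$ is Hausdorff-compact, so $R_n \to R$ in Hausdorff distance for some closed $R$. The $4$-Lipschitz continuity of $\dis_{\piX,\piY}(\cdot)$ established in the proof of Proposition \ref{prop:gh_realization} then yields $\dis_{\piX,\piY}(R) \leq 2\,\dgp(\mm{X},\mm{Y})$.

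The delicate step, which I expect to be the main obstacle, is to verify that $\mu(R) \geq 1 - \dgp(\mm{X},\mm{Y})$; unlike distortion, the measure $\mu(\cdot)$ is not continuous under Hausdorff limits of sets combined with weak limits of measures. I would handle this via a neighborhood-and-portmanteau argument. For any $\delta>0$, let $\overline{R^\delta}$ denote the closed $\delta$-neighborhood of $R$ in $X \times Y$. Hausdorff convergence $R_n \to R$ gives $R_n \subseteq \overline{R^\delta}$ for all sufficiently large $n$, hence
\begin{equation*}
\mu_n(R_n) \;\leq\; \mu_n\!\left(\overline{R^\delta}\right).
\end{equation*}
Applying the portmanteau theorem for closed sets to the weakly convergent sequence $\mu_n \to \mu$,
\begin{equation*}
\limsup_{n\to\infty} \mu_n(R_n) \;\leq\; \limsup_{n\to\infty} \mu_n\!\left(\overline{R^\delta}\right) \;\leq\; \mu\!\left(\overline{R^\delta}\right).
\end{equation*}
Since $R$ is closed, $\bigcap_{\delta>0} \overline{R^\delta} = R$, and continuity from above for the finite measure $\mu$ gives $\mu(\overline{R^\delta}) \to \mu(R)$ as $\delta \to 0$. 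Combining with $\liminf_n \mu_n(R_n) \geq \lim_n (1 - \epsilon_n/2) = 1 - \dgp(\mm{X},\mm{Y})$ yields $\mu(R) \geq 1 - \dgp(\mm{X},\mm{Y})$.

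Putting the two estimates together, $\mu$ is an $\epsilon$-coupling with $\epsilon := 2\,\dgp(\mm{X},\mm{Y})$ and witness $R$, which, in view of Theorem \ref{thm:gp_coupling}, realizes the infimum.
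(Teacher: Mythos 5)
Your argument is correct and follows essentially the same route as the paper's proof: extract a weak limit of the couplings and a Hausdorff limit of the (closed) witness relations, use the $4$-Lipschitz continuity of the distortion from Proposition \ref{prop:gh_realization} to control $\dis_{\piX,\piY}(R)$, and recover $\mu(R)\geq 1-\dgp(\mm{X},\mm{Y})$ via the closed-$\delta$-neighborhood plus portmanteau argument followed by continuity from above. The only cosmetic difference is that you invoke Prokhorov's theorem directly on the compact product, whereas the paper cites its Proposition \ref{prop:couplings}; both are valid here.
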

\begin{proof}
    Let $\dgp(\mm{X},\mm{Y})=r$. By Theorem \ref{thm:gp_coupling}, for each integer $n>0$, there exists a $(2r+2/n)$-coupling $\mu_n$ between $\mm{X}$ and $\mm{Y}$. By Proposition \ref{prop:couplings}, passing to a subsequence if necessary, there exists a coupling $\mu$ between $\mm{X}$ and $\mm{Y}$ such that $\mu_n$ weakly converges to $\mu$. It is enough to show that $\mu$ is a $2r$-coupling, as we can take $\epsilon = 2r$.

    There exist Borel sets $R_n \subseteq X \times Y$ such that $\mu_n(R_n) \geq 1-r-1/n$ and $\dis_{\pi_X,\pi_Y}(R_n) \leq 2r+2/n$, for all $n$. Without loss of generality, we can assume that $R_n$ is closed. By passing to a subsequence if necessary, by \cite[Theorem~7.3.8]{burago2001course}, there exists a closed subset $R$ of $X \times Y$ such that $R_n$ Hausdorff converges to a closed subset $R$, where $X\times Y$ is endowed with the product sup-metric. As in the proof of Proposition \ref{prop:gh_realization}, we have $\dis_{\pi_X,\pi_Y}(R) \leq 2r$. It remains to show that $\mu(R) \geq 1-r$. Let $\delta>0$ and $R^\delta$ denote the closed $\delta$-neighborhood of $R$ in $X \times Y$. For $n$-large enough, $R_n \subseteq R^\delta$, so $\mu_n(R^\delta) \geq 1-r-1/n$. Hence, by \cite[Theorem~11.1.1]{dudley2018real}, $\mu(R^\delta) \geq 1-r$. Therefore, $\mu(R)=\mu(\cap_n R^{1/n})=\lim_{n \to \infty}\mu(R^{1/n}) \geq 1-r.$
\end{proof}

Two $mm$-fields over $B$ are called isomorphic if there is a measure preserving isometry between them. The following result shows that, even without a compactness requirement, fully supported fields with zero Gromov-Prokhorov distance are isomorphic.

\begin{proposition}\label{prop:gp_isom}
    Let $\mm{X}$ and $\mm{Y}$ be fully supported $mm$-fields over $B$. Then, $\dgp(\mm{X},\mm{Y})=0$ if and only if $\mm{X}$ and $\mm{Y}$ are isomorphic.
\end{proposition}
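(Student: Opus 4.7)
My plan is as follows. The forward direction $(\Leftarrow)$ is immediate: if $\Phi\colon \mm{X}\to\mm{Y}$ is a measure-preserving isometry over $B$, I embed both fields into $\mm{Y}$ via $\Phi$ and the identity; then $\phi_\ast\muX=\muY$, so the ambient Prokhorov distance vanishes and $\dgp(\mm{X},\mm{Y})=0$.

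For the converse $(\Rightarrow)$, the idea is to build a measure-preserving isometry as the graph of a suitable limit coupling. Invoking Theorem \ref{thm:gp_coupling}, for each $n\geq 1$ I select a $(1/n)$-coupling $\mu_n\in C(\muX,\muY)$ together with a closed Borel set $R_n\subseteq X\times Y$ satisfying $\mu_n(R_n)\geq 1-1/(2n)$ and $\dis_{\piX,\piY}(R_n)\leq 1/n$. Since the marginals $\muX,\muY$ are fixed Borel probability measures on Polish spaces, each is tight, and therefore $\{\mu_n\}$ is tight on $X\times Y$. Prokhorov's theorem yields a subsequence, which I relabel as $(\mu_n)$, converging weakly to some $\mu\in C(\muX,\muY)$.

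The crux is to show that $\supp\mu$ is the graph of an isometry compatible with $\pi_X,\pi_Y$. Fix $(x,y),(x',y')\in\supp\mu$ and $\epsilon>0$. By the Portmanteau theorem applied to the open $\epsilon$-balls about these points, $\liminf_n \mu_n(B_\epsilon(x,y))\geq \mu(B_\epsilon(x,y))>0$ and similarly at $(x',y')$. Since $\mu_n(R_n^c)\leq 1/(2n)\to 0$, for $n$ large both $B_\epsilon(x,y)\cap R_n$ and $B_\epsilon(x',y')\cap R_n$ are nonempty, providing pairs $(x_n,y_n),(x_n',y_n')\in R_n$ within sup-distance $\epsilon$ of $(x,y)$ and $(x',y')$. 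The distortion bound $\dis_{\piX,\piY}(R_n)\leq 1/n$ combined with the triangle inequality and the limits $n\to\infty$, $\epsilon\to 0$ yields $d_X(x,x')=d_Y(y,y')$ and $d_B(\piX(x),\piY(y))=0$. Specializing to $(x,y_1),(x,y_2)\in\supp\mu$ forces $y_1=y_2$, so $\supp\mu$ is the graph of a well-defined map $f\colon A\to Y$ with $A:=\pi_1(\supp\mu)$, and $f$ is an isometry satisfying $\piY\circ f=\piX|_A$.

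To conclude, I observe that $\muX(\overline{A})\geq \mu(\supp\mu)=1$, so full support of $\mm{X}$ forces $\overline{A}=X$; hence $f$ extends by uniform continuity to an isometry $\bar f\colon X\to Y$ over $B$. Then $\mu=(\mathrm{id}\times\bar f)_\ast\muX$ has second marginal $\bar f_\ast\muX=\muY$, so $\bar f$ is measure-preserving. As $\bar f(X)$ is closed in $Y$ (a complete isometric image) and has full $\muY$-measure, full support of $\mm{Y}$ forces $\bar f(X)=Y$, giving the desired isomorphism. The principal obstacle is the support-level argument in the third paragraph: one must carefully combine the Portmanteau inequality for open sets with the mass concentration $\mu_n(R_n)\to 1$ to promote arbitrary points of $\supp\mu$ into limits of points actually lying in the $R_n$'s, which is what transfers the distortion bounds to the limit coupling.
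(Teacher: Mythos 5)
Your proof is correct, and its overall architecture matches the paper's: pass to a weak limit $\mu$ of near-optimal couplings, show $\supp\mu$ is the graph of an isometry over $B$, use full support to globalize, then verify measure preservation. The one step where you genuinely diverge is the transfer of the distortion bounds to the limit. The paper works with the product measures: it notes $\mu_n\otimes\mu_n\Rightarrow\mu\otimes\mu$, defines closed sets $S_N$ of quadruples with tolerance $1/N$, uses $R_n\times R_n\subseteq S_N$ and the closed-set Portmanteau inequality to get $\mu\otimes\mu(S_N)=1$, and concludes $\supp(\mu)\times\supp(\mu)\subseteq S=\bigcap_N S_N$. You instead apply the open-set Portmanteau inequality to small balls around points of $\supp\mu$ and combine it with $\mu_n(R_n)\to 1$ to produce approximating points inside $R_n$; this avoids product measures entirely and is a perfectly valid alternative. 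Your measure-preservation step is also cleaner: once $\supp\mu$ lies in the graph of the continuous map $\bar f$, the identity $\mu=(\mathrm{id}\times\bar f)_\ast\mu_X$ immediately gives $\bar f_\ast\mu_X=\mu_Y$, whereas the paper verifies $\mu_X(A)=\mu(A\times\phi(A))$ by hand for Borel sets $A$. Both routes are sound; the paper's $S_N$ device packages the limit argument into a single application of Portmanteau for closed sets, while yours is more elementary and pointwise but requires the small bookkeeping with $\epsilon$-balls that you correctly identify as the principal obstacle.
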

\begin{proof}
    The ``if'' statement is clear, so assume that $\dgp(\mm{X},\mm{Y})=0$. By Theorem \ref{thm:gp_coupling}, for each integer $n>0$, there exists a $2/n$-coupling $\mu_n$ between $\mm{X}$ and $\mm{Y}$. By Proposition \ref{prop:couplings}, by passing to a subsequence if necessary, there exists a coupling $\mu$ between $\mm{X}$ and $\mm{Y}$ such that $\mu_n$ weakly converges to $\mu$. Note that $\mu_n \otimes \mu_n$ converges weakly to $\mu \otimes \mu$ (\cite[Theorem~2.8] {billingsley2013convergence}).

    There exist Borel sets $R_n \subseteq X \times Y$ such that $\mu_n(R_n) \geq 1-1/n$ and $\dis_{\pi_X,\pi_Y}(R_n) \leq 2/n$, for all $n>0$. For an integer $N>0$, let
    \begin{equation}
        \begin{split}
            S_N:=\{(x,y,&x',y') \in X \times Y \times X \times Y \colon |d_X(x,x')-d_Y(y,y')| \leq 2/N, \\
            &d_B(\pi_X(x),\pi_Y(y)) \leq 1/N, \ \text{and} \ d_B(\pi_X(x'),\pi_Y(y')) \leq 1/N\}.
        \end{split}
    \end{equation}
    If $n \geq N$, then $R_n \times R_n \subseteq S_N$, so $\mu_n \otimes \mu_n (S_N) \geq (1-1/n)^2$. Hence, by \cite[Theorem~11.1.1]{dudley2018real}, $\mu \otimes \mu(S_N) =1$. Since $S_{N+1} \subseteq S_N$, if we let $S=\cap_N S_N$, then $\mu\otimes \mu(S)=1$ and
    \begin{equation}
    S=\{(x,y,x',y') \in X \times Y \times X \times Y: d_X(x,x')=d_Y(y,y'), \pi_X(x)=\pi_Y(y), \text{and} \, \pi_X(x')=\pi_Y(y')\}.
    \end{equation}
    Note that $\supp(\mu) \times \supp(\mu)=\supp(\mu \otimes \mu) \subseteq S$. Letting $X_0$ and $Y_0$ be the projections of $\supp(\mu)$ onto $X$ and $Y$, respectively, we have that 
    \begin{equation}
    \mu_X(\bar{X}_0)=\mu(\bar{X}_0\times Y) \geq \mu(\supp(\mu)) =1,
    \end{equation}
    where the bar denotes closure. Since $\mu_X$ is fully supported, $\bar{X}_0=X$. Similarly, $\bar{Y}_0=Y$. For each $x \in X_0$, there exists $y \in Y_0$ such that $(x,y) \in \supp(\mu)$. If $(x,y') \in \supp(\mu)$, then $(x,y,x,y') \in S$, implying that $y=y'$ and $\pi_X(x)=\pi_Y(y)=\pi_Y(y')$. Similarly, for each $y \in Y_0$, there exists unique $x$ in $X_0$ such that $(x,y) \in \supp(\mu)$. Thus, we can define a bijection $\phi: X_0 \to Y_0$ by requiring that $(x,\phi(x)) \in \supp(\mu)$. Furthermore, for all $x,x' \in X$, we have $(x,\phi(x),x',\phi(x')) \in S$, so $\pi_Y(\phi(x))=\pi_X(x)$ and $d_X(x,x')=d_Y(\phi(x),\phi(x'))$. This implies that $\phi$ is an isometry between $X_0,Y_0$, which extends to an isometry $\Phi: \mm{X} \to \mm{Y}$ of fields. It remains to show that $\phi$ is measure preserving.

    For a Borel subset $A \subseteq X$, we have $\mu_X(A)=P(A \times X) \geq P(A \times \phi(A))$. Let $A_0 = A \cap X_0$. Note that $(A \times Y) \cap \supp(P)=A_0 \times \phi(A_0)$, hence $\mu_X(A)=P(A_0 \times \phi(A_0)) \leq P(A \times \phi(A))$. Hence $\mu_X(A)=P(A \times \phi(A))$. Similarly, for a Borel subset $A$ of $Y$, we have $\mu_Y(A)=P(\phi^{-1}(A) \times A)$. Therefore,
    $$\mu_X(\phi^{-1}(A))=P(\phi^{-1}(A) \times \phi(\phi^{-1}(A)))=P(\phi^{-1}(A) \times A)=\mu_Y(A).$$
    This shows that $\phi$ is measure preserving and completes the proof.
\end{proof}

In Theorem \ref{thm:gh_metric}, we have shown that compact $B$-fields form a Polish space under the Gromov-Hausdorff distance. In contrast, the space of compact $mm$-fields over $B$ is not complete under the Gromov-Prokhorov distance since every Borel probability measure on a Polish space is the Prokhorov limit of compactly supported Borel probability measures. The next result shows that if we drop the compactness requirement, we still get a Polish space.

\begin{theorem}\label{thm:gp_metric}
    Let $\calMM_B$ denote the set of isometry classes of fully supported $B$-fields. Then, $\dgp$ metrizes $\calMM_B$ and $(\calMM_B,\dgp)$ is a Polish space.
\end{theorem}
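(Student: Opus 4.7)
The plan is to verify the metric axioms first and then establish separability and completeness in turn, leaning on the coupling characterization from Theorem \ref{thm:gp_coupling} and the gluing construction of Definition \ref{def:connecting_fields} together with Lemma \ref{lem:connect_prokhorov}. Symmetry of $\dgp$ is immediate from Definition \ref{def: functional GPB distance}, the triangle inequality is Proposition \ref{prop:gp_triangle}, and definiteness on $\calMM_B$ is exactly Proposition \ref{prop:gp_isom}, since we have restricted to fully supported $mm$-fields.

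For separability, I would prove an $mm$-field analogue of Lemma \ref{lem:finite_approximation}. Fix a countable dense set $B_0 \subseteq B$ and let $\calF$ denote the collection of isomorphism classes of finitely supported $mm$-fields whose underlying metric takes only rational values, whose atomic weights are rational and sum to $1$, and whose field map takes values in $B_0$. Given $\mm{X} \in \calMM_B$ and $\epsilon > 0$, choose a finite Borel partition $\{A_1,\dots,A_k\}$ of a totally bounded approximation of $X$ with each $A_i$ of positive measure and diameter less than $\epsilon/3$, pick $x_i \in A_i$, approximate $\pi_X(x_i)$ by $b_i \in B_0$, and rationalize distances exactly as in Lemma \ref{lem:finite_approximation} to obtain a finite $B$-field $\mm{Y}$; equip $\mm{Y}$ with any rational probability measure $\mu_Y$ close to $\sum_i \mu_X(A_i) \delta_{y_i}$ in total variation. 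Then $\mu = \sum_i \mu_X|_{A_i} \otimes \delta_{y_i}$, together with the relation $R = \bigcup_i (A_i \times \{y_i\})$, is an $O(\epsilon)$-coupling whose distortion is controlled by the same computation as in Lemma \ref{lem:finite_approximation}, and Theorem \ref{thm:gp_coupling} converts this into the bound $\dgp(\mm{X},\mm{Y}) = O(\epsilon)$. The family $\calF$ is countable, giving density of $\calF$ in $(\calMM_B,\dgp)$.

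For completeness, let $(\mm{X}_n)$ be Cauchy and pass to a subsequence with $\dgp(\mm{X}_n,\mm{X}_{n+1}) < 1/2^{n+1}$. By Theorem \ref{thm:gp_coupling}, for each $n$ there is a $2^{-n}$-coupling $\mu_n$ with a witnessing Borel relation $R_n \subseteq X_n \times X_{n+1}$. Iteratively applying Definition \ref{def:connecting_fields} with $r_n = 1/2^n$ produces an ascending chain $\mm{Z}_1 \subseteq \mm{Z}_2 \subseteq \cdots$ of $B$-fields; let $\mm{Z}$ be the completion of their union, which is Polish by the argument used in the proof of Theorem \ref{thm:gh_metric}. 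Each $\mm{X}_n$ embeds isometrically into $\mm{Z}$, and Lemma \ref{lem:connect_prokhorov} gives $\dpr^Z(\mu_{X_n},\mu_{X_{n+1}}) \leq 1/2^n$, so the pushforwards form a Cauchy sequence in the Prokhorov distance on the Polish space $Z$. By completeness of the Prokhorov distance on Borel probability measures of a Polish space, they converge weakly to some $\mu$ on $Z$. Setting $\mm{X} := (\supp \mu, d_Z|_{\supp \mu}, \pi_Z|_{\supp \mu}, \mu)$ yields a fully supported $mm$-field over $B$, and $\dgp(\mm{X}_n,\mm{X}) \leq \dpr^Z(\mu_{X_n},\mu) \to 0$.

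I expect the main obstacle to be the bookkeeping in the completeness argument: one must verify that the iterated gluings produce a single Polish $B$-field into which every $\mm{X}_n$ embeds isometrically (not only into intermediate $\mm{Z}_m$'s), and that telescoping the bounds $\dpr^Z(\mu_{X_n},\mu_{X_{n+1}}) \leq 1/2^n$ across the chain yields genuine control of $\dpr^Z(\mu_{X_n},\mu_{X_m})$ for all $m > n$. A secondary subtlety is ensuring that passage to $\supp \mu$ preserves the 1-Lipschitz property of $\pi_Z$ and that this restriction really lies in $\calMM_B$; both are automatic since $\pi_Z$ is $1$-Lipschitz on all of $Z$ and $\supp \mu$ is closed. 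The separability step requires no further care because each approximating atom receives positive mass by construction.
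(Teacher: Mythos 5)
Your proposal is correct and follows essentially the same route as the paper: metric axioms via Propositions \ref{prop:gp_triangle} and \ref{prop:gp_isom}, separability via finite rational approximations, and completeness via iterated gluing, Lemma \ref{lem:connect_prokhorov}, Prokhorov-completeness on the glued Polish space, and passage to the support of the limit measure. The only cosmetic difference is in the density step, where the paper invokes Varadarajan's theorem to approximate $\mu_X$ by empirical measures of finite samples and then applies Lemma \ref{lem:finite_approximation}, whereas you build the finite approximant from a Borel partition with rationalized weights; both yield the same countable dense family up to bookkeeping.
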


We begin the proof with a finite approximation lemma.

\begin{lemma}\label{lem:finite_gp_approximation}
    Let $\mm{X}$ be a $B$-field, and $B_0$ be a dense subset of $B$. For each $\epsilon>0$, there exists a finite $mm$-field $\mm{Y}$ over $B$ such that $\mu_Y$ is the normalized counting measure, $\pi_Y$ takes values in $B_0$, $d_Y$ only takes rational values, and $\dgp(\mm{X},\mm{Y}) < \epsilon$.
\end{lemma}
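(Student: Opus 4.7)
The plan is to adapt the construction of Lemma \ref{lem:finite_approximation}, which produced a finite $B$-field with rational distances and $B_0$-valued field close in $\dgh$ to a compact target, by equipping the approximating set with a normalized counting measure and controlling the Prokhorov-type distortion in place of the Hausdorff distortion. Since $\mu_X$ is Borel on a Polish space, Ulam's theorem ensures tightness: I would first choose a compact $K \subseteq X$ with $\mu_X(K) > 1-\epsilon/6$, partition $K$ into nonempty Borel sets $A_1,\ldots,A_k$ of diameter less than $\epsilon/6$ (refining a finite open cover by $(\epsilon/12)$-balls), and pick representatives $z_i \in A_i$. Writing $p_i := \mu_X(A_i)$, I would then select a large integer $N$ and nonnegative integers $n_i$ with $\sum_i n_i = N$ and $|n_i/N - p_i| < \epsilon/(12k)$, so that the discretized measure $\sum_i (n_i/N)\,\delta_{z_i}$ is close to $\mu_X$ in Prokhorov distance on $X$.

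Next, take $Y:=\{y_{i,j}:1\le i\le k,\ 1\le j\le n_i\}$ as a set of $N$ distinct formal points with $\mu_Y$ the normalized counting measure, pick $b_i\in B_0$ with $d_B(b_i,\pi_X(z_i))<\epsilon/6$, and set $\pi_Y(y_{i,j}):=b_i$. Fixing an integer $n>6/\epsilon$, define
\begin{equation*}
d_Y(y_{i,j},y_{i',j'}) := \begin{cases} 0 & \text{if } (i,j)=(i',j'), \\ 1/n & \text{if } i=i',\ j\neq j', \\ \lceil n\,\max\{d_X(z_i,z_{i'}),d_B(b_i,b_{i'})\}\rceil/n & \text{if } i\neq i'. \end{cases}
\end{equation*}
After merging partition pieces whose pairs $(z_i,b_i)$ coincide (so WLOG the cross-cluster distances are positive and at least $1/n$), the triangle inequality for $d_Y$ and the $1$-Lipschitz property of $\pi_Y$ follow from the same arithmetic as in Lemma \ref{lem:finite_approximation}, the only new case being triangles that include a $1/n$ intra-cluster edge, which is dominated by any cross-cluster distance.

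To bound $\dgp(\mm{X},\mm{Y})$ I would invoke Theorem \ref{thm:gp_coupling}, constructing a coupling $\mu\in C(\mu_X,\mu_Y)$ by further subdividing each $A_i$ into $n_i$ Borel pieces $A_i^{(j)}$ of $\mu_X$-mass as close as possible to $1/N$, transporting $A_i^{(j)}$ onto $\{y_{i,j}\}$ (i.e., including the measure $\mu_X|_{A_i^{(j)}}\otimes \delta_{y_{i,j}}$), and coupling any residual $\mu_X$-mass on $X\setminus K$ together with any residual $\mu_Y$-mass arbitrarily to correct the marginals. Letting $R$ be the set where this good transport occurs, the weight estimates yield $\mu(R)\ge 1-\epsilon/2$. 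For $(x,y_{i,j}),(x',y_{i',j'})\in R$, routing through $z_i,z_{i'}$ and using $d_X(x,z_i),d_X(x',z_{i'})<\epsilon/6$, $1/n<\epsilon/6$, and the ceiling estimate from Lemma \ref{lem:finite_approximation} gives $|d_X(x,x')-d_Y(y_{i,j},y_{i',j'})|<\epsilon$, while $d_B(\pi_X(x),\pi_Y(y_{i,j}))\le d_B(\pi_X(x),\pi_X(z_i))+d_B(\pi_X(z_i),b_i)<\epsilon/3$. Hence $\dis_{\pi_X,\pi_Y}(R)<\epsilon$, so $\mu$ is an $\epsilon$-coupling, and Theorem \ref{thm:gp_coupling} delivers $\dgp(\mm{X},\mm{Y})<\epsilon$ after tightening the parameters by a fixed constant factor.

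The main obstacle is the measure-theoretic bookkeeping in the coupling step: splitting $\mu_X|_{A_i}$ into Borel pieces of mass exactly $1/N$ is impossible when $\mu_X$ has atoms heavier than $1/N$. The resolution is to take $N$ much larger than $1/\epsilon$ and, if necessary, larger than the reciprocal of the heaviest atom, so that every atom not matching the quantization exactly can be pushed into the residual region of total mass $O(\epsilon)$ without spoiling the distortion estimate on the good set. With this and the natural tightening of constants (using $\epsilon/C$ throughout for a fixed large $C$), the final bound is driven strictly below $\epsilon$.
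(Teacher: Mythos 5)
Your construction is correct in outline but takes a genuinely different route from the paper. The paper's proof is a two-line reduction: Varadarajan's theorem gives a finite subset of $X$ whose normalized counting measure is Prokhorov-close to $\mu_X$ inside $X$ itself (hence $\dgp$-close as an $mm$-field), and then Lemma \ref{lem:finite_approximation} is applied to that finite subfield to round the distances into $\mathbb{Q}$ and the values into $B_0$; the correspondence built there restricts to a bijection on the finite set, so its diagonal coupling is an $\epsilon$-coupling and the $\dgh$ bound upgrades to a $\dgp$ bound. You instead build everything by hand: Ulam tightness, a small-diameter Borel partition of a compact core, quantization of the masses $p_i$ to multiples of $1/N$, an explicit rational metric on the cluster points, and an explicit $\epsilon$-coupling fed into Theorem \ref{thm:gp_coupling}. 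Your approach is more self-contained (it avoids Varadarajan's theorem and does not route through the $\dgh$ lemma as a black box) at the cost of substantially more bookkeeping; the paper's approach buys brevity by reusing machinery already in place. Your distortion estimates on the good set $R$ and the merging step that guarantees cross-cluster distances are at least $1/n$ (needed for the triangle inequality with the $1/n$ intra-cluster edges) both check out.

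The one place you explicitly flag an obstacle, your proposed resolution is wrong as stated. If $\mu_X$ has an atom of mass, say, $1/2$, then no choice of $N$ lets you split $\mu_X|_{A_i}$ into Borel pieces of mass $1/N$, and you certainly cannot ``push'' that atom into a residual region of total mass $O(\epsilon)$ --- that would destroy the marginal condition or blow up the bad set. The correct repair is simply to abandon the Borel partition into pieces: a coupling is a measure on $X\times Y$, not a map, so you may define it directly as
\begin{equation*}
\mu \;:=\; \sum_i \min\!\Big(p_i,\tfrac{n_i}{N}\Big)\cdot \frac{\mu_X|_{A_i}}{p_i}\otimes\Big(\tfrac{1}{n_i}\sum_{j=1}^{n_i}\delta_{y_{i,j}}\Big) \;+\; (\text{residual}),
\end{equation*}
which spreads each atom's mass across the $n_i$ points of its cluster with no splitting of sets required. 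With that substitution the residual mass is controlled by $\mu_X(X\setminus K)$ plus the quantization error, your estimate $\mu(R)\geq 1-\epsilon/2$ holds, and the rest of the argument goes through.
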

\begin{proof}
    Varadarajan's Theorem \cite[Theorem~11.4.1]{dudley2018real} implies that there are finite subsets of $X$ whose normalized counting measures get arbitrarily $\dgp$-close to $\mu_X$. The result then follows from Lemma \ref{lem:finite_approximation}.
\end{proof}
\begin{proof}[Proof of Theorem \ref{thm:gp_metric}]
    Symmetry of $\dgp$ is clear, the triangle inequality has been proven in Proposition \ref{prop:gp_triangle} and definiteness in Proposition \ref{prop:gp_isom}. Hence, $\dgp$ is a metric on $\calMM_B$. Moreover, Lemma \ref{lem:finite_gp_approximation} shows that $\calMM_B$ has a countable dense set. Thus, it remains to show that $\calMM_B$ is complete. 
    
    Let $\{\mm{X}_n\}$ be a Cauchy sequence of $mm$-fields over $B$ with respect to Gromov-Prokhorov distance. To prove that $\{\mm{X}_n\}$ is convergent, it suffices to show that it has a convergent subsequence. By passing to a subsequence if necessary, we can assume that $\dgp(\mm{X}_n,\mm{X}_{n+1})<1/2^n$, for all $n$. For each $n>0$, there exists a coupling $\mu_n \in C(\mu_{X_n},\mu_{X_{n+1}})$ and a Borel subspace $R_n \subseteq X_n \times X_{n+1}$ such that $\mu_n(R_n) > 1- 1/2^n$ and $\dis_{\pi_{X_n},\pi_{X_{n+1}}}(R_n) <2/2^n$. Let $r_n=1/2^n$. By applying the construction described in Definition \ref{def:connecting_fields} consecutively, we get a $B$-field
    $$\mm{Z}_n:=((\mm{X}_1 \coprod_{R_1,r_1} \mm{X}_2) \coprod_{R_2,r_2} \mm{X}_3  \dots) \coprod_{R_n,r_n} \mm{X}_{n+1}$$
    for each $n>0$. By construction, $\mm{Z}_n$ is a subfield of $\mm{Z}_{n+1}$. Let $\mm{Z}$ be the completion of $\cup_{n>0} \mm{Z}_n$. Note that the union of countable dense sets in $Z_n$ is a countable dense set in $Z$, hence $Z$ is Polish. Therefore, $\mm{Z}$ is a $B$-field. By Lemma \ref{lem:connect_prokhorov}, $\dpr^Z(\mu_{X_n},\mu_{X_{n+1}}) \leq r_n = 1/2^n$. Hence, $\{\mu_{X_n}\}$ forms a Cauchy sequence with respect to Prokhorov distance in $Z$. By \cite[Corollary~11.5.5]{dudley2018real}, there exists a Borel probability measure $\mu$ of $Z$ such that $\{\mu_{X_n}\}$ Prokhorov converges to $\mu$ in $Z$. Let $Y$ be the support of $\mu$. Since $Y$ is complete, $\mm{Y}:=(Y,\mu,B,\pi_Z|_Y)$ is a fully supported $mm$-field over $B$. We have $\dgh(\mm{X}_n,\mm{Y}) \leq \dpr^Z(\mu_{X_n},\mu)$, hence $\{\mm{X}_n\}$ Gromov-Prokhorov converges to $\mm{Y}$.
\end{proof}

\begin{remark}
Analogous to Theorem \ref{thm: F_B = F_I}, there is a characterization of the Gromov-Prokhorov distance between compact $mm$-fields in terms of isometric embeddings into a Urysohn field. The statement and proof are nearly identical and therefore omitted. 
\end{remark}

\section{Gromov-Wasserstein Distance for Metric-Measure Fields}\label{sec:gw}

Given $B$-fields $(X,d_X,\pi_X)$ and $(Y,d_Y,\pi_Y)$, we let $\mXY \colon (X \times Y) \times (X \times Y) \to \R$ and $\dXY \colon X \times Y \to \R$ be the functions given by
\begin{equation}
\begin{split}
\mXY(x,y,x',y') &:=|\dX(x,x')-\dY(y,y')| \,, \\
\dXY(x,y) &:= \dB(\piX(x),\piY(y)) \,.
\end{split}
\end{equation}
Note that
\begin{equation}
    \begin{split}
        |\mXY(x_1,y_1,x_1',y_1')-\mXY(x_2,y_2,x_2',y_2')| &\leq |\dX(x_1,x_1')-\dX(x_2,x_2')| + |\dY(y_1,y_1') - \dY(y_2,y_2')| \\
        &\leq \dX(x_1,x_2) + \dX(x_1',x_2') + \dY(y_1,y_2) + \dY(y_1',y_2') \\
        &\leq 4 \max\{\dX(x_1,x_2), \dY(y_1,y_2), \dX(x_1',x_2'), \dY(y_1',y_2') \}.
\end{split}
\end{equation}
Therefore, if we endow $(X \times Y) \times (X \times Y)$ with the product sup metric, then $\mXY$ is $4$-Lipschitz. Similarly, $\dXY$ is $2$-Lipschitz. Using this notation, we introduce a field version of the Gromov-Wasserstein distance in a manner similar to \cite{memoli2011gromov, vayer2020fused}. 

\smallskip
\begin{definition}\label{def:fwd}
Let $\calX=(X,d_X,\piX,\muX)$ and $\calY=(Y,d_Y,\piY,\muY)$ be $mm$-fields over $B$. For $1 \leq p < \infty$, the {\em Gromov-Wasserstein distance} $\dgwp(\calX,\calY)$ is defined as
\begin{equation*}
\begin{split}
    \dgwp(\calX,\calY):= \inf_{\mu \in \cXY} \costpP .
\end{split}
\end{equation*}
For $p=\infty$,
\begin{equation*}
    \dgwi(\calX,\calY):= \inf_{\mu \in \cXY} \costiP.
\end{equation*}
\end{definition}

That $\dgw{p}$ and $\dgw{\infty}$ are metrics can be argued as in the case of metric measure spaces (see (\cite{memoli2011gromov}, Theorem 5.1). The arguments in Propositions \ref{prop:gwrealization} and \ref{prop:gwembedding} are similar to those in  \cite{memoli2011gromov}. However, Theorem \ref{thm:gwuniform} is new even for metric-measure spaces.

\begin{remark}\label{rem:support}
As $\mXY$ and $\dXY$ are continuous, their suprema over a set do not change after taking the closure of the set. Since the support is the smallest closest set of full measure, the suprema in the definition of $\dgwi(\calX,\calY)$ are essential suprema. 
\end{remark}

\begin{proposition}\label{prop:gwrealization}
For each $1\leq p \leq \infty$, $\dgwp(\calX,\calY)$ is realized by a coupling. Furthermore,
\begin{equation*}
\lim_{p \to \infty} \dgwp(\calX,\calY)=\dgwi(\calX,\calY).
\end{equation*}
\end{proposition}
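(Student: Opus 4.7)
The plan is to handle both assertions via two facts: (a) $\cXY$ is weakly compact (Prokhorov's theorem applied to the tight family determined by the fixed marginals), and (b) for any probability measure $\nu$ and non-negative measurable $f$, $\|f\|_{L^p(\nu)}$ is non-decreasing in $p$ with $\lim_{p \to \infty} \|f\|_{L^p(\nu)} = \|f\|_{L^\infty(\nu)} = \essup_\nu f$; in particular, the essential supremum equals $\sup_{p \geq 1} \|f\|_{L^p(\nu)}$. Since $\mXY$ and $\dXY$ are continuous and non-negative (indeed Lipschitz, as noted just before Definition \ref{def:fwd}), the Portmanteau theorem gives that $\mu \mapsto \int \mXY^p \, \dPP$ and $\mu \mapsto \int \dXY^p \, \dP$ are lower semicontinuous under weak convergence of $\mu$. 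For brevity, I denote the cost appearing in Definition \ref{def:fwd} by $c_p(\mu)$, $1 \leq p \leq \infty$.

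For existence when $1 \leq p < \infty$, I compose the two lower semicontinuous integrals above with the continuous non-decreasing map $t \mapsto t^{1/p}$ and combine them through the maximum and factor $1/2$; this shows $\mu \mapsto c_p(\mu)$ is lower semicontinuous and hence attains its infimum on the weakly compact set $\cXY$. For existence when $p = \infty$, Remark \ref{rem:support} identifies the suprema in $c_\infty(\mu)$ with essential suprema, so fact (b) yields $c_\infty(\mu) = \sup_{q \in \mathbb{N}} c_q(\mu)$, a supremum of lower semicontinuous functions and hence itself lower semicontinuous. The same compactness argument then produces an optimal coupling.

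For the limit, monotonicity from fact (b) gives $\dgwp \leq \dgwi$ for every finite $p$, so $\lim_{p \to \infty} \dgwp \leq \dgwi$. For the reverse inequality, let $\mu_p$ realize $\dgwp$ (available by the previous step) and, using weak compactness, extract a subsequence $\mu_{p_k}$ weakly converging to some $\mu^\ast \in \cXY$. For any fixed finite $q$ and $p_k \geq q$, monotonicity yields $c_q(\mu_{p_k}) \leq c_{p_k}(\mu_{p_k}) = \dgw{p_k}$; lower semicontinuity of $c_q$ then gives $c_q(\mu^\ast) \leq \liminf_k \dgw{p_k} = \lim_p \dgwp$. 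Taking the supremum over $q$ and invoking fact (b) once more, $c_\infty(\mu^\ast) = \sup_q c_q(\mu^\ast) \leq \lim_p \dgwp$, and therefore $\dgwi \leq c_\infty(\mu^\ast) \leq \lim_p \dgwp$.

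The main technical subtlety is the lower semicontinuity of $c_\infty$ under weak convergence: essential suprema are not lower semicontinuous as functions of the measure in general, but expressing them as suprema of $L^p$ norms (which is valid precisely because $\mu$ and $\mu \otimes \mu$ are probability measures) sidesteps this issue and lets the argument go through uniformly across $p \in [1,\infty]$.
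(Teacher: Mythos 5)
Your proof is correct, and for the $p=\infty$ case and the limit it takes a genuinely different route from the paper. For finite $p$ the two arguments coincide in substance: both rest on weak compactness of $\cXY$ and lower semicontinuity of $\mu \mapsto \int \mXY^p\, d(\mu\otimes\mu)$ and $\mu \mapsto \int \dXY^p\, d\mu$ (you phrase it as ``lsc on a compact set attains its infimum,'' the paper runs a minimizing sequence; same thing). Where you diverge is at $p=\infty$: you exploit the identity $\essup_\nu f = \sup_{q} \|f\|_{L^q(\nu)}$ for probability measures (together with Remark \ref{rem:support}) to write $c_\infty = \sup_q c_q$ as a supremum of lower semicontinuous functionals, which is itself lower semicontinuous; existence of an optimal coupling and the inequality $\dgwi \leq \lim_{p} \dgwp$ then both follow from the same compactness argument applied to a weak limit of the $\mu_p$. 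The paper instead adapts the Givens--Shortt argument: it fixes $r$ below the $\infty$-cost of the limit coupling, introduces the open super-level sets $U$ and $V$, applies the Portmanteau theorem to get $\liminf_n \mu_n\otimes\mu_n(U) \geq 2m > 0$, and deduces $\dgw{q_n} \geq r\, m^{1/q_n} \to r$. Your version is more uniform (no case split between $U$ and $V$, and existence and convergence drop out of one lemma about $c_\infty$), at the cost of invoking the standard monotone-convergence fact for $L^p$ norms; the paper's version is more hands-on and self-contained. One small point worth making explicit if you write this up: lower semicontinuity of $\mu \mapsto \int \mXY^p\, d(\mu\otimes\mu)$ requires first passing from weak convergence of $\mu_n$ to weak convergence of $\mu_n\otimes\mu_n$ (the paper cites Billingsley for this), and the functions involved may be unbounded, so the Portmanteau step should be justified by truncation or by the reference the paper uses.
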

\begin{proof}
For each integer $n\geq 1$, there exists $\mu_n \in \cXY$ such that the expression on the right-hand side in the definition of $\dgwp(\calX,\calY)$ is $ \leq \dgwp(\calX,\calY) + 1/ n$. By Proposition \ref{prop:couplings} in the Appendix, without loss of generality, we can assume that $\mu_n$ converges weakly to a coupling $\mu$. This, in turn, implies that $\mu_n \otimes \mu_n$ converges weakly to $\mu \otimes \mu$ (\cite[Theorem~2.8] {billingsley2013convergence}). We show that $\mu$ is an optimal coupling.

\smallskip
\noindent
{\bf Case 1.} Suppose that $1 \leq p < \infty$. Since $\mXY$ and $\dXY$ are continuous and bounded below, by \cite[Lemma~4.3]{villani2008optimal} we have
\begin{equation}
        \int \mXY^p \dPP \leq \liminf_n \mXY^p \dPPn 
        \quad \text{and} \quad
        \int \dXY^p \dP \leq \liminf_n \int \dXY^p \dPn \,.
\label{eq:pnp}
\end{equation} 
Using \eqref{eq:pnp} and the fact that for any sequences $\{a_n\}$ and $\{b_n\}$ of real numbers the inequality
\begin{equation}
\max \,\{\liminf_n a_n, \liminf b_n\} \leq \liminf_n \max \{a_n,b_n\} 
\end{equation}
holds, we obtain
\begin{equation}
\begin{split}
        \dgwp(\calX,\calY) &\leq \costpP \\
        & \leq \liminf_n \max \left\{ \frac{1}{2} \left( \int \mXY^p \dPPn \right)^{1/p}, \left( \int \dXY^p \dPn \right)^{1/p} \right\} \\
        &\leq \liminf_n \left( \dgwp(\calX,\calY) + 1/n \right) = \dgwp(\calX,\calY).
\end{split}
\end{equation}
This implies that $\mu$ realizes the Gromov-Wasserstein distance, as claimed.

\medskip
\noindent    
{\bf Case 2} For $p=\infty$, we adapt the proof of \cite[Proposition~3]{givens1984class} to the present setting. Note that if $1 \leq q \leq q' < \infty$, then
\begin{equation}
\dgw{q}(\calX,\calY) \leq \dgw{q'}( \calX,\calY) \leq \dgwi(\calX,\calY).
\end{equation}
Hence, we have
\begin{equation}
\lim_{q \to \infty} \dgw{q}(\calX,\calY) = \sup_q \dgw{q}(\calX,\calY) \leq \dgwi(\calX,\calY) .
\end{equation}

Let $\{q_n\}$ be a sequence of real numbers satisfying $q_n \geq 1$ and $q_n \to \infty$, and $\mu_n$ be an optimal coupling realizing $\dgw{q_n}(\calX,\calY)$. As before, we can assume that $\mu_n$ converges to $\mu$ weakly so that $\mu_n \otimes \mu_n$ converges weakly to $\mu \otimes \mu$. Let $0 \leq r < \costiP$ and set
\begin{equation}
U =\{(x,y,x',y') \colon \mXY (x,y,x',y') /2 > r \} \quad \text{and} \quad V = \{(x,y) \colon \dXY (x,y) >r \}.
\end{equation}
Either $\mu \otimes \mu \,(U)> 0$ or $\mu(V) > 0$. Let us first assume that $\mu \otimes \mu (U) = 2m > 0$. By the Portmanteau Theorem \cite[Theorem~11.1.1]{dudley2018real},
\begin{equation}
2m \leq \liminf \mu_n \otimes \mu_n(U).
\end{equation}
Passing to a subsequence if necessary, we can assume that $\mu_n \otimes \mu_n(U) \geq m> 0$, for all $n$. We then have 
\begin{equation}
        \dgw{q_n}(\calX, \calY) \geq \frac{1}{2} \bigg(\int \mXY^{q_n} \dPPn \bigg)^{1/q_n}
        \geq r \, (\mu_n \otimes \mu_n (U))^{1/q_n}  \geq r \, m^{1/q_n}.
\end{equation}
Hence,
\begin{equation}
        \lim_{p \to \infty} \dgwp(\calX,\calY) = \lim_n \dgw{q_n}(\calX,\calY) \geq r.
\end{equation}
    Since $r < \costiP$ is arbitrary, we get
    \begin{equation}
    \begin{split}
        \costiP &\geq \dgwi(\calX,\calY) \\ &\geq \lim_{p \to \infty} \dgwp(\calX,\calY) \\  &\geq \costiP.
    \end{split}
    \end{equation}
This shows that the coupling $\mu$ realizes $\dgw{\infty}(\calX,\calY)$ and also proves the convergence claim. The case $\mu(V)>0$ is handled similarly.
\end{proof}

The next proposition establishes a standard relation between the Gromov-Wasserstein and the Wasserstein distances in the setting of $mm$-fields.

\begin{proposition} \label{prop:gwembedding}
Let $\calX=(X,d_X,\pi_X,\mu_X)$ and $\calY=(Y,d_Y,\pi_Y, \mu_Y)$ be $mm$-fields over $B$. Suppose that $\piZ \colon Z \to B$ is 1-Lipschitz and $\iX \colon X \to Z$ and $\iY \colon Y \to Z$ are isometric embeddings satisfying $\piX=\piZ \circ \iota_X$ and $\piY= \piZ \circ \iY$. Then, for any $1 \leq p \leq \infty$, we have
\[
\dgwp(\calX,\calY) \leq \dwp((\iX)_*(\mu_X),(\iY)_*(\mu_Y))\,.
\]
\end{proposition}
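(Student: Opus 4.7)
The plan is to start from an arbitrary coupling $\nu$ between $(\iota_X)_*\mu_X$ and $(\iota_Y)_*\mu_Y$ and pull it back to a coupling $\mu$ between $\mu_X$ and $\mu_Y$, then bound the Gromov--Wasserstein cost of $\mu$ by the Wasserstein cost of $\nu$ using the triangle inequality in $Z$ and Minkowski's inequality. Taking the infimum over $\nu$ yields the conclusion.

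In more detail: since $\iota_X$ and $\iota_Y$ are isometric embeddings of Polish spaces, $\iota_X(X)$ and $\iota_Y(Y)$ are Borel subsets of $Z$, and $(\iota_X)_*\mu_X$, $(\iota_Y)_*\mu_Y$ are supported there. Any coupling $\nu \in C((\iota_X)_*\mu_X, (\iota_Y)_*\mu_Y)$ is therefore supported on $\iota_X(X) \times \iota_Y(Y)$, so we may define $\mu := (\iota_X^{-1} \times \iota_Y^{-1})_*\nu \in C(\mu_X, \mu_Y)$. The key pointwise estimates, obtained from the triangle inequality in $(Z, d_Z)$ applied to the isometric images and from the $1$-Lipschitz property of $\pi_Z$, are
\begin{align*}
m_{X,Y}(x,y,x',y') &= |d_Z(\iota_X(x),\iota_X(x')) - d_Z(\iota_Y(y),\iota_Y(y'))| \\
&\leq d_Z(\iota_X(x),\iota_Y(y)) + d_Z(\iota_X(x'),\iota_Y(y')), \\
d_{X,Y}(x,y) &= d_B(\pi_Z(\iota_X(x)),\pi_Z(\iota_Y(y))) \leq d_Z(\iota_X(x),\iota_Y(y)).
\end{align*}

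For $1 \leq p < \infty$, applying Minkowski's inequality in $L^p(\mu \otimes \mu)$ to the first bound and using that each of the two terms on the right, viewed as a function of four variables, has $L^p(\mu \otimes \mu)$-norm equal to $\|d_Z(\iota_X(\cdot),\iota_Y(\cdot))\|_{L^p(\mu)} = \|d_Z\|_{L^p(\nu)}$, gives
\[
\tfrac{1}{2}\,\|m_{X,Y}\|_{L^p(\mu \otimes \mu)} \leq \|d_Z\|_{L^p(\nu)}.
\]
The second bound directly yields $\|d_{X,Y}\|_{L^p(\mu)} \leq \|d_Z\|_{L^p(\nu)}$. Taking the maximum and then the infimum over $\nu$ produces $\dgwp(\calX,\calY) \leq \dwp((\iota_X)_*\mu_X, (\iota_Y)_*\mu_Y)$. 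For $p = \infty$, the same pointwise estimates give, on $\operatorname{supp}(\mu \otimes \mu) \subseteq (\operatorname{supp}\mu)^2$ and $\operatorname{supp}\mu$ respectively, the bounds $\tfrac{1}{2}\sup m_{X,Y} \leq \sup_{\operatorname{supp}\nu} d_Z$ and $\sup d_{X,Y} \leq \sup_{\operatorname{supp}\nu} d_Z$, after noting that $\operatorname{supp}\mu$ projects into $\operatorname{supp}\nu$ under the identification of $X \times Y$ with $\iota_X(X) \times \iota_Y(Y)$.

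The only mildly delicate step is justifying that the pullback of $\nu$ along $(\iota_X^{-1} \times \iota_Y^{-1})$ is well defined and lies in $C(\mu_X, \mu_Y)$; this follows because isometric embeddings of Polish spaces are Borel isomorphisms onto their images, and $\nu$ is concentrated on $\iota_X(X) \times \iota_Y(Y)$. Beyond this, the argument is a routine triangle-inequality-plus-Minkowski computation, so I do not expect a substantive obstacle.
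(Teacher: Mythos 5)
Your proof is correct and follows essentially the same route as the paper's: pull the coupling back from $\iota_X(X)\times\iota_Y(Y)$ to $X\times Y$ (using injectivity of the embeddings), bound $\mXY$ by the triangle inequality in $Z$ and $\dXY$ by the $1$-Lipschitz property of $\pi_Z$, and conclude via Minkowski's inequality. The only cosmetic differences are that the paper starts from an optimal coupling rather than taking an infimum over all couplings, and it obtains the $p=\infty$ case by letting $p\to\infty$ (via Proposition \ref{prop:gwrealization}) instead of your direct essential-supremum argument; both variants are valid.
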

\begin{proof}
For $1 \leq p < \infty$, let $\nu$ be an optimal coupling between $(\iX)_*(\mu_X)$ and $(\iY)_*(\mu_Y)$ realizing the $p$-Wasserstein distance $\dwp((\iX)_*(\muX), (\iY)_*(\mu_Y))$. Since $\iX (X) \times \iY (Y)$ has full measure in $(Z \times Z, \nu)$, there is a measure $\mu$ on $X \times Y$ such that $(\iX \times \iY)_*(\mu)=\nu$. Since $(\iX)_*$ and $(\iY)_*$ are injective, $\mu \in C(\muX,\muY)$. We have
\begin{equation}
\begin{split}
    \costpPm &= \bigg(\iint |\dZ(w,w')-\dZ(z,z')|^p d\nu(w,z)d\nu(w',z') \bigg)^{1/p}\\
    &\leq \bigg( \iint (\dZ(w,z)+\dZ(w',z'))^pd\nu(w,z)d\nu(w',z') \bigg)^{1/p} \\
    &\leq \bigg( \int \dZ(w,z)^pd\nu(w,z) \bigg)^{1/p} + \bigg( \int \dZ(w',z')^pd\nu(w',z') \bigg)^{1/p} \\
    &= 2 \dwp((\iX)_*(\muX), (\iY)_*(\muY)).
\end{split}
\end{equation}
Similarly,
\begin{equation}
\begin{split}
    \costpPd &= \bigg(\int \dB(\piZ(w),\piZ(z))^p d\nu(w,z) \bigg)^{1/p} \\
    &\leq \bigg(\int \dZ(w,z)^p d\nu(w,z) \bigg)^{1/p} = \dwp((\iX)_*(\muX), (\iY)_*(\muY)).
\end{split}
\end{equation}
Hence,
\begin{equation}
    \dgwp(\calX,\calY) \leq \costpP \leq \dwp((\iX)_*(\muX), (\iY)_*(\muY)).
\label{eq:lim}    
\end{equation}
Letting $p \to \infty$ in \eqref{eq:lim}, we get $\dgwi(\calX,\calY) \leq \dwi((\iX)_*(\muX), (\iY)_*(\muY))$.
\end{proof}

Recall that, in a metric measure space $(X,d_X, \mu_X)$, a sequence $(x_n)$ is called {\em uniformly distributed} (u.d.) if  $\sum_{i=1}^n \delta_{x_i}/n \to \mu_X$ weakly. Let $U_X$ denote the set of uniformly distributed sequences in $X$. It is known that $U_X$ is a Borel set in $X^\infty$ and $\mu_X^\infty(U_X)=1$ \cite{kondo2005probability}.

The next result provides a characterization of $\dgwi(\calX,\calY)$ in terms of uniformly distributed sequences.

\begin{theorem}\label{thm:gwuniform}
Let $\fmm{X} = (X, d_X, \pi_X, \mu_X)$ and $\fmm{Y} = (Y, d_Y, \pi_Y, \mu_Y)$ be bounded $mm$-fields over $B$. Then,
    $$\dgwi(\calX,\calY)= \inf_{\tiny \begin{matrix} (x_n) \in U_X \\ (y_n) \in U_Y \end{matrix}} \max \left\{ \frac{1}{2} \sup_{i,j} \mXY(x_i,y_i,x_j,y_j),
    \, \sup_i \dXY(x_i,y_i) \right\}.$$
Furthermore, there are sequences $(x_n) \in U_X$ and $(y_n) \in U_Y$ that realize the infimum on the right-hand side.
\end{theorem}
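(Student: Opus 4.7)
The plan is to establish both inequalities between $\dgwi(\calX,\calY)$ and the infimum $R$ on the right-hand side, each via a construction that turns a coupling into a pair of uniformly distributed sequences or vice versa. Throughout, I will invoke Remark \ref{rem:support}, which lets me replace the suprema appearing in Definition \ref{def:fwd} with honest suprema over the supports of $\mu$ and $\mu\otimes\mu$.

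For the upper bound $\dgwi(\calX,\calY) \leq R$, I would start from arbitrary $(x_n) \in U_X$ and $(y_n) \in U_Y$ and assemble the empirical measures $\mu_n := \tfrac{1}{n}\sum_{i=1}^n \delta_{(x_i,y_i)}$ on $X \times Y$. Their marginals are the empirical measures of $(x_n)$ and $(y_n)$, which converge weakly to $\muX$ and $\muY$; since weakly convergent sequences of Borel probability measures on a Polish space are tight by Prokhorov's theorem, tightness of the marginals forces tightness of $\{\mu_n\}$ itself. Extracting a weakly convergent subsequence $\mu_{n_k} \to \mu$ and applying continuous mapping to the projections gives $\mu \in \cXY$. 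The key observation is that by the Portmanteau theorem $\supp(\mu) \subseteq \overline{\{(x_i,y_i) : i \geq 1\}}$, so $\supp(\mu \otimes \mu) = \supp(\mu)\times\supp(\mu)$ sits inside the closure of $\{(x_i,y_i,x_j,y_j) : i,j \geq 1\}$. Since $\mXY$ and $\dXY$ are continuous, their suprema over a set and over its closure coincide, so
\[
\costiPm \leq \sup_{i,j} \mXY(x_i,y_i,x_j,y_j), \qquad \costiPd \leq \sup_i \dXY(x_i,y_i).
\]
Thus $\dgwi(\calX,\calY) \leq \costiP$ is dominated by the max on the right, and taking the infimum over u.d.\ sequences yields $\dgwi(\calX,\calY) \leq R$.

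For the reverse inequality together with the realization claim, I would begin from an optimal coupling $\mu \in \cXY$ attaining $\dgwi(\calX,\calY)$, whose existence is supplied by Proposition \ref{prop:gwrealization}. Because $X \times Y$ is Polish, the existence result for u.d.\ sequences cited in the text \cite{kondo2005probability} provides a $\mu^\infty$-full-measure set of u.d.\ sequences in $X \times Y$; intersecting with the full-measure set $\supp(\mu)^{\infty}$, I can choose a u.d.\ sequence $(z_n) = ((x_n,y_n))$ contained in $\supp(\mu)$. Pushing the empirical measures of $(z_n)$ forward under $\piX$ and $\piY$ and applying continuous mapping yields $(x_n) \in U_X$ and $(y_n) \in U_Y$. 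Since every $(x_i,y_i)$ lies in $\supp(\mu)$, each $(x_i,y_i,x_j,y_j)$ lies in $\supp(\mu \otimes \mu)$, and both indexed suprema are dominated by the essential suprema used to define $\dgwi$. Thus this particular pair attains the value $R$ and simultaneously certifies $R \leq \dgwi(\calX,\calY)$ and the realization of the infimum.

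The main technical hurdle, as I see it, is the clean transfer between indexed suprema over the points $(x_i,y_i,x_j,y_j)$ and essential suprema over $\supp(\mu \otimes \mu)$; both directions rely on this, and Remark \ref{rem:support} together with the continuity of $\mXY$ and $\dXY$ is what makes the identifications go through. A secondary subtlety in the second direction is the need to choose the u.d.\ sequence \emph{inside} $\supp(\mu)$ rather than merely $\mu$-almost everywhere, so that the bound $\mXY(x_i,y_i,x_j,y_j) \leq \costiPm$ holds for every pair of indices instead of just for almost every pair.
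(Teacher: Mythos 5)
Your proof is correct, and while your second half (extracting a $\mu$-uniformly distributed sequence inside $\supp(\mu)$ from an optimal coupling to get both the inequality $R\leq \dgwi(\calX,\calY)$ and the realization claim) is essentially identical to the paper's argument, your first half takes a genuinely different route. The paper proves $\dgwi(\calX,\calY)\leq R$ by approximating $\calX$ and $\calY$ with finite $mm$-fields $\calE_n,\calF_n$ built on $\{1,\dots,n\}$ from the sequences, invoking Proposition \ref{prop:gwembedding} to control $\dgwp(\calX,\calE_n)$ for finite $p$, using the diagonal coupling on the finite fields, and only then passing to $p\to\infty$ via Proposition \ref{prop:gwrealization}. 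You instead construct a coupling directly as a weak subsequential limit of the empirical measures $\tfrac1n\sum_i\delta_{(x_i,y_i)}$ on $X\times Y$ (tightness of the marginals giving tightness of the sequence, hence Prokhorov applies), and then exploit the Portmanteau-theorem inclusion $\supp(\mu)\subseteq\overline{\{(x_i,y_i)\}}$ together with continuity of $\mXY$ and $\dXY$ to bound the essential suprema by the indexed suprema. Your route is more direct: it stays entirely at $p=\infty$, avoids the finite-field detour and the limit $\lim_{p\to\infty}\dgwp=\dgwi$, and produces an explicit coupling witnessing the bound; it also does not actually use boundedness for this inequality. The paper's route, by contrast, reuses machinery ($\dgwp$ for finite $p$, Proposition \ref{prop:gwembedding}) already established for other purposes, at the cost of needing $\dwp(\mu_X,\sum_i\delta_{x_i}/n)\to 0$, which is where boundedness enters. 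Both arguments are sound; the one point worth making explicit in yours is the standard identity $\supp(\mu\otimes\mu)=\supp(\mu)\times\supp(\mu)$ for Borel measures on separable metric spaces, which you use but do not justify.
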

\begin{proof}
 Let us denote the infimum on the right-hand side by $\alpha$ and let $\mu$ be an optimal coupling realizing $\dgwi(\calX,\calY)$. Then,
 \begin{equation}
 \dgwi(\calX,\calY)=\costiP .
 \end{equation}
Let $(x_n,y_n)$ be an equidistributed sequence with respect to $\mu$ in $\sP$. Then, $(x_n) \in U_X$ and $(y_n) \in U_X$, and we have 
    \begin{equation}\label{eq:optimal_uniform}
    \begin{split}
        \alpha &\leq  \max \left\{ \frac{1}{2} \sup_{i,j} \mXY(x_i,y_i,x_j,y_j), \, \sup_i \dXY(x_i,y_i) \right\} \\
        &\leq \costiP = \dgwi(\calX,\calY).
    \end{split}
    \end{equation}
For the converse inequality, let $p \geq 1$, $\epsilon>0$, $(x_n) \in U_X$ and $(y_n) \in U_Y$. Let $\calE_n$ be the $mm$-field with domain $E_n= \{1,\dots,n\}$ equipped with the (pseudo)-metric $d_{E_n}(i,j)=d_X(x_i,x_j)$, normalized counting measure, and the 1-Lipschitz function $\pi_E(i)=\pi_X(x_i)$. Similarly, define $\calF_n$ using $(y_n)$. By Proposition \ref{prop:gwembedding}, we have
\begin{equation}
\dgwp(\calX, \calE_n) \leq \dwp(\mu_X, \sum_{i=1}^n \delta_{x_i}/n) \quad \text{and} \quad \dgwp(\calY, \calF_n) \leq \dwp(\mu_Y, \sum_{i=1}^n \delta_{y_i}/n) .
\end{equation}
Since the Wasserstein distances in the above inequalities converge to $0$ as $n \to \infty$ (\cite[Theorem~6.9]{villani2008optimal}), we can choose $n$ large enough so that the Gromov-Wasserstein distances in the above inequalities are $< \epsilon$. If we use diagonal coupling $\zeta_n \in C(\calE_n, \calF_n)$ given by $\zeta_n ({i,i})=1/n$, we get
\begin{equation}
\dgwp(\calE_n,\calF_n) \leq  \max \left\{ \frac{1}{2} \sup_{i,j} \mXY(x_i,y_i,x_j,y_j), \, \sup_i \dXY(x_i,y_i) \right\}.
\end{equation}
This, in turn, implies that 
 \begin{equation}
\dgwp(\calX, \calY) \leq \max \left\{ \frac{1}{2} \sup_{i,j} \mXY(x_i,y_i,x_j,y_j), \, \sup_i \dXY(x_i,y_i) \right\} + 2\epsilon.
\end{equation}
Since $(x_n) \in U_X$, $(y_n) \in U_Y$, and $\epsilon>0$ is arbitrary, we get $\dgwp(\calX,\calY) \leq \alpha$. As $p \geq 1$ is arbitrary, Proposition \ref{prop:gwrealization} implies that
\begin{equation}
\dgwi(\calX,\calY)=\lim_{p \to \infty} \dgwp(\calX,\calY) \leq \alpha .
\end{equation}
This also shows that the sequences $(x_n) \in U_X$ and $(y_n) \in U_Y$ realize the infimum.
\end{proof}

\begin{remark}
In Theorem \ref{thm:gwuniform}, setting the range space $B$ to be a singleton, we obtain a corresponding statement for metric-measure spaces.
\end{remark}
\section{Gromov-Wasserstein Through Functional Curvature} \label{sec:dmatrix}

Given a sequence $(x_n)$ is an $mm$-space $(X, d, \mu)$, one can form an associated (infinite) distance matrix $D = (d_{ij})$, where $d_{ij} = d(x_i, x_j)$. Gromov's Reconstruction Theorem \cite{gromov2007metric} states that the distribution of all distance matrices for $(X,d,\mu)$ with respect to the product measure $\mu^\infty$ is a complete invariant. This section introduces {\em augmented distance matrices} to establish a similar result for $mm$-fields and also studies relationships between the Gromov-Wasserstein distance between $mm$-fields and the Wasserstein distance between the corresponding augmented distance matrix distributions. For an integer $n>0$, let
\begin{equation}
\calR^n:=\{(r_{ij}) \in \R^{n \times n}: r_{ij}=r_{ji}, r_{ii}=0, r_{ik} \leq r_{ij}+r_{jk} \}
\end{equation}
denote the space of all $n \times n$ (pseudo) distance matrices equipped with the metric
\begin{equation}
\rho_n ((r_{ij},b_i),(r'_{ij},b'_i)):=\max \, (\frac{1}{2}\sup_{ij}|r_{ij}-r'_{ij}|,\, \sup_i d_B(b_i,b'_i)).
\end{equation}
Similarly, denoting the natural numbers by $\mathbb{N}$, let
\begin{equation}
\calR:=\{(r_{ij}) \in \R^{\mathbb{N} \times \mathbb{N}}: r_{ij}=r_{ji}, r_{ii}=0, r_{ik} \leq r_{ij}+r_{jk}  \}
\end{equation}
be the space of all countably infinite (pseudo) distance matrices equipped with the weak topology; that is, the coarsest topology that makes all projections $\pi_n \colon \calR \to \calR^n$ (onto the northwest $n \times n$ quadrant) continuous, $n >0$.
\begin{definition} \label{def matrix dstrbton}
Let $B$ be a Polish space.
\begin{enumerate}[(i)] 
\item The space of (countably infinite) {\em augmented distance matrices} (ADM) is defined as $\calR_B := \calR \times B^\infty$. 
\item Similarly, for $n> 0$, define the space of $n \times n$ {\em augmented distance matrices} as $\calR_B^n:= \fmm{R}^n \times B^n$. 
\end{enumerate}
\end{definition}
\noindent
In the study of $mm$-fields $(X, B, \pi, \mu)$, if $(x_n)$ is a sequence in $X$, we associate to $(x_n)$ the ADM defined by $r_{ij} = d(x_i, x_j)$ and $b_i = \pi (x_i)$. 

\begin{definition}[ADM Distribution]
Let $\mathcal{X}=(X, B,\pi,\mu)$ be an $mm$-field and $\calFX: X^\infty \to \mathcal{R}_B$ the map $(x_i) \mapsto (d_X(x_i,x_j),\pi(x_i))$. The {\em augmented distance matrix distribution} of $\fmm{X}$ is defined as $\calDX =(\calFX)_*(\mu^\infty)$. Similarly, for $n>0$, define $\calFXn: X^n \to \calR_B^n$ and $\mathcal{D}_\mathcal{X}^n:=(\calFXn)_*(\mu^n)$.
\end{definition}
The argument presented in \cite[p.\,122]{gromov2007metric} can be easily modified to prove the following field reconstruction result.
\begin{theorem}(Field Reconstruction Theorem) \label{thm:reconstruction}
Let $\fmm{X} = (X,d_X,\pi_X, \mu_X)$ and $\fmm{Y} = (Y,d_Y, \pi_Y, \mu_Y)$ be $mm$-fields over $B$ such that $\mu_X$ and $\mu_Y$ are fully supported. Then,
\[
\calX \simeq \calY \text{ if and only if } \calDX=\calDY.
\]
\end{theorem}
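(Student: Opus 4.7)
The plan is to handle the easy forward direction directly and reduce the converse to constructing matched equidistributed sequences via disintegration, then extending the resulting pointwise correspondence to a field isomorphism by density and continuity.

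For the forward direction ($\Rightarrow$), I would fix an isomorphism $\Phi\colon\calX\to\calY$ with underlying measure-preserving isometry $\phi\colon X\to Y$ satisfying $\pi_Y\circ\phi=\pi_X$. The product map $\phi^\infty\colon X^\infty\to Y^\infty$ then pushes $\mu_X^\infty$ to $\mu_Y^\infty$, and $\calFY\circ\phi^\infty=\calFX$ because distances and $\pi$-values are preserved componentwise. Pushing forward both identities yields $\calDX=\calDY$.

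For the converse ($\Leftarrow$), write $\calD:=\calDX=\calDY$. The space $\calR_B$ is a Borel subset of the Polish space $\mathbb{R}^{\mathbb{N}\times\mathbb{N}}\times B^\infty$, hence standard Borel, so I would disintegrate $\mu_X^\infty$ and $\mu_Y^\infty$ over $\calD$ via $\calFX$ and $\calFY$, obtaining families of conditional probabilities $\{\nu^X_{(R,b)}\}$ on $X^\infty$ and $\{\nu^Y_{(R,b)}\}$ on $Y^\infty$ concentrated on the respective fibers, and define the coupling
\[
\Pi \;:=\; \int_{\calR_B} \nu^X_{(R,b)}\otimes\nu^Y_{(R,b)}\,d\calD(R,b)
\]
on $X^\infty\times Y^\infty$. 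By construction, $\Pi$ is supported on the set $E=\{((x_i),(y_i))\colon\calFX((x_i))=\calFY((y_i))\}$ and has marginals $\mu_X^\infty$ and $\mu_Y^\infty$. Since $U_X$ and $U_Y$ carry full marginal mass, the intersection $(U_X\times U_Y)\cap E$ has full $\Pi$-measure; pick any $((x_i),(y_i))$ in it.

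Given such a matched pair, define $\phi_0(x_i):=y_i$. The identity $d_X(x_i,x_j)=d_Y(y_i,y_j)$ makes $\phi_0$ a well-defined bijective isometry of $\{x_i\}$ onto $\{y_i\}$, and $\pi_Y(y_i)=\pi_X(x_i)$ matches the field values. Because $\mu_X,\mu_Y$ are fully supported and $(x_i)\in U_X$, $(y_i)\in U_Y$, the sets $\{x_i\}$ and $\{y_i\}$ are dense in $X$ and $Y$ respectively, so $\phi_0$ extends uniquely to an isometric embedding $\phi\colon X\to Y$; completeness of $Y$ together with density of $\phi(X)\supseteq\{y_i\}$ forces surjectivity. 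The identity $\pi_Y\circ\phi=\pi_X$ holds on $\{x_i\}$, hence everywhere by continuity. Finally, for any bounded continuous $f\colon Y\to\mathbb{R}$, weak convergence $\tfrac{1}{n}\sum_{i=1}^n\delta_{x_i}\to\mu_X$ and continuity of $f\circ\phi$ give $\int f\circ\phi\,d\mu_X=\lim_n\tfrac{1}{n}\sum_{i=1}^n f(y_i)=\int f\,d\mu_Y$, so $\phi_*\mu_X=\mu_Y$ and the induced $\Phi$ is a field isomorphism.

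The main obstacle will be the disintegration-and-coupling step: checking that $\calR_B$ is standard Borel, that both conditional families exist as genuine probability kernels, and that $E$ genuinely carries the full mass of $\Pi$. Once these measure-theoretic points are in place, the extension-by-density and pushforward-of-measures arguments are routine, and full support is used only to promote density of the equidistributed trace into density in all of $X$ and $Y$.
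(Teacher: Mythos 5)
Your proof is correct. Note that the paper does not actually write out a proof of Theorem \ref{thm:reconstruction}: it only remarks that Gromov's argument for $mm$-spaces ``can be easily modified,'' so your write-up is a complete instantiation of exactly the intended strategy --- produce a single pair of $\mu_X$- and $\mu_Y$-equidistributed sequences with identical augmented distance matrices, then extend the resulting partial isometry by density (using full support) and recover measure preservation from equidistribution. All the steps you flag as potential obstacles do go through: $\calR_B$ is a closed subset of the Polish space $\R^{\mathbb{N}\times\mathbb{N}}\times B^\infty$, so the disintegrations exist and are fiber-concentrated, and $E$ is Borel since $\calFX,\calFY$ are continuous for the product topologies. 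One small simplification worth noting: the disintegration and the coupling $\Pi$ can be avoided entirely --- since $\calFX(U_X)$ and $\calFY(U_Y)$ are analytic sets of full outer $\calD$-measure (this is the same measurability point the paper invokes in the proof of Theorem \ref{thm: dWp(Dx,Dy)=dGW(infty)}), their intersection is nonempty, which already hands you the matched pair of equidistributed sequences.
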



On the space $\calR_B$, we also define the (extended) metric
\begin{equation}
\rho ((r_{ij},b_i),(r'_{ij},b'_i)):=\max (\frac{1}{2}\sup_{ij}|r_{ij}-r'_{ij}|,\, \sup_i d_B(b_i,b'_i)).
\end{equation}
The metric $\rho_n$ over $\calRBn$ is defined similarly. However, since $(\calR_B, \rho)$ is not separable, instead of using $\rho$ to define a topology on $\calR_B$, we only employ it to formulate the Wasserstein distance in $\calR_B$. The next lemma shows that $\rho$ is sufficiently regular for the Wasserstein distance so defined to satisfy some desirable properties. 
\begin{lemma}\label{lemma: lower-semi-continuity of partial}
The extended function $\rho \colon \mathcal{R}_B \times \mathcal{R_B}\to [0,\infty]$ is lower semicontinuous.
\end{lemma}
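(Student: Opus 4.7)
The plan is to observe that $\rho$ is a supremum of countably many continuous real-valued functions on $\mathcal{R}_B \times \mathcal{R}_B$, from which lower semicontinuity is immediate.

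First, I would unpack the topology on $\mathcal{R}_B \times \mathcal{R}_B$. By definition, $\mathcal{R}$ carries the weak topology generated by the projections $\pi_n\colon \mathcal{R}\to \mathcal{R}^n$, and $B^\infty$ carries the product topology, so convergence in $\mathcal{R}_B = \mathcal{R}\times B^\infty$ is entrywise: if $((r^{(n)}_{ij}), (b^{(n)}_i)) \to ((r_{ij}), (b_i))$, then $r^{(n)}_{ij}\to r_{ij}$ for each fixed $(i,j)$ and $b^{(n)}_i\to b_i$ for each fixed $i$.

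Next, for each pair $(i,j)\in \mathbb{N}\times \mathbb{N}$, define
\begin{equation*}
f_{ij}\bigl((r,b),(r',b')\bigr) := \tfrac{1}{2}|r_{ij}-r'_{ij}|,\qquad g_i\bigl((r,b),(r',b')\bigr) := d_B(b_i,b'_i).
\end{equation*}
Each $f_{ij}$ is continuous on $\mathcal{R}_B\times \mathcal{R}_B$ because it factors through the continuous projection to the $(i,j)$-entries of the two matrices followed by the continuous map $(s,t)\mapsto \tfrac12|s-t|$. Similarly, each $g_i$ is continuous because $d_B\colon B\times B\to [0,\infty)$ is continuous and the projections to the $i$-th coordinates of the two sequences are continuous.

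Finally, directly from the formula,
\begin{equation*}
\rho = \sup\Bigl(\{f_{ij}\}_{i,j\in \mathbb{N}}\cup \{g_i\}_{i\in \mathbb{N}}\Bigr),
\end{equation*}
since $\max(\sup_{ij} f_{ij},\sup_i g_i)$ equals the supremum of the combined countable family. The pointwise supremum of any family of continuous $[0,\infty]$-valued functions is lower semicontinuous (the set $\{\rho > t\} = \bigcup_{ij}\{f_{ij}>t\}\cup \bigcup_i \{g_i>t\}$ is open as a union of open sets), which establishes the claim. The argument is essentially routine; the only mild subtlety is that $\rho$ is extended-valued, but this poses no issue because lower semicontinuity is preserved under arbitrary suprema of lower semicontinuous (in particular, continuous) functions regardless of whether the supremum is finite.
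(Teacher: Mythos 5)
Your proof is correct and is essentially the paper's argument: the paper writes $\rho$ as the increasing pointwise limit of the continuous functions $\rho_n\circ\pi_n$, which are exactly the partial maxima of your countable family $\{f_{ij}\}\cup\{g_i\}$, and then invokes the same fact that a pointwise supremum of continuous functions is lower semicontinuous. Your version just makes the decomposition slightly more fine-grained; there is no substantive difference.
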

\begin{proof}
Let $\pi_n: \calRB \to \calRB^n$ denote the projection map. Note that $\rho_n \circ \pi_n \uparrow \rho$ pointwise. Hence, as a pointwise supremum of a sequence of continuous functions, $\rho$ is lower semicontinuous.
\end{proof}

In the discussion below, the $p$-Wasserstein distances $\dwp (\calDX, \calDY)$ and $\dwp (\calDXn, \calDYn)$ are taken with respect to the distance functions $\rho$ and $\rho_n$, respectively.

\begin{theorem}\label{thm: dWp(Dx,Dy)=dGW(infty)}
Let $\calX$ and $\calY$ be bounded $mm$-fields over $B$. Then, for any $1 \leq p \leq \infty$, we have
\begin{equation*}\label{dgw=dwp}
    \lim_{n \to \infty} \dwp(\calDXn,\calDYn) = \dwp(\mathcal{D}_\mathcal{X},\mathcal{D}_\mathcal{Y}) = \dgwi(\mathcal{X},\mathcal{Y}) .
\end{equation*}
\end{theorem}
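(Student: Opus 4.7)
The plan is to prove the two equalities in turn. I begin with the middle equality $\dwp(\calDX, \calDY) = \dgwi(\calX, \calY)$, whose backbone is Theorem \ref{thm:gwuniform}. For the upper bound, take an optimal coupling $\mu^* \in \cXY$ realizing $\dgwi(\calX, \calY)$ (which exists by Proposition \ref{prop:gwrealization}) and push $(\mu^*)^\infty$ forward under $(\calFX, \calFY)$ to obtain a coupling $\eta$ of $\calDX$ and $\calDY$. By the strong law of large numbers, $(\mu^*)^\infty$-almost every sequence $((x_n),(y_n))$ is $\mu^*$-equidistributed in $X \times Y$; combined with the continuity of $\mXY$ and $\dXY$ and Remark \ref{rem:support}, this forces $\rho(\calFX(x_n),\calFY(y_n))$ to equal the $\dgwi$-cost of $\mu^*$, namely $\dgwi(\calX,\calY)$. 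Thus $\rho$ is $\eta$-a.s.\ equal to $\dgwi$, yielding $\dwp(\calDX,\calDY) \leq \dgwi$ for every $1 \leq p \leq \infty$.

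For the matching lower bound, observe that $\muX^\infty(U_X) = \muY^\infty(U_Y) = 1$, so every coupling $\eta$ of $\calDX,\calDY$ is concentrated on $\calFX(U_X) \times \calFY(U_Y)$. Any pair of augmented distance matrices in this set admits representative sequences $(x_n) \in U_X, (y_n) \in U_Y$, and the distance $\rho$ between them depends only on the matrix entries and equals $\max(\tfrac12 \sup_{i,j} \mXY(x_i,y_i,x_j,y_j),\, \sup_i \dXY(x_i,y_i))$, which Theorem \ref{thm:gwuniform} bounds below by $\dgwi(\calX,\calY)$. Integrating (or taking essential supremum when $p = \infty$) gives the reverse bound.

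For the first equality, the easy direction $\dwp(\calDXn,\calDYn) \leq \dwp(\calDX,\calDY)$ follows by pushing an optimal coupling on $\calR_B \times \calR_B$ forward along $(\pi_n,\pi_n)$ and using $\rho_n \circ (\pi_n,\pi_n) \leq \rho$. For the converse, I lift each optimal coupling $\eta_n$ on $\calRBn \times \calRBn$ to a coupling $\tilde\eta_n$ on $\calR_B \times \calR_B$ by disintegrating $\calDX$ and $\calDY$ along $\pi_n$, arranging $(\pi_n,\pi_n)_*\tilde\eta_n = \eta_n$ with marginals $\calDX,\calDY$. By tightness of the set of couplings with fixed Polish marginals, extract a weak subsequential limit $\tilde\eta$. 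For each fixed $m$, the function $(\rho_m \circ (\pi_m,\pi_m))^p$ is bounded and continuous on $\calR_B \times \calR_B$, and for $n \geq m$ the monotonicity $\rho_m \circ (\pi_m,\pi_m) \leq \rho_n \circ (\pi_n,\pi_n)$ gives $\int (\rho_m \circ (\pi_m,\pi_m))^p \, d\tilde\eta_n \leq \int \rho_n^p \, d\eta_n = \dwp(\calDXn,\calDYn)^p$. Passing $n \to \infty$ by weak convergence and then $m \to \infty$ by monotone convergence ($\rho_m \circ (\pi_m,\pi_m) \uparrow \rho$, which is exactly the content of Lemma \ref{lemma: lower-semi-continuity of partial} in disguise), I obtain $\int \rho^p \, d\tilde\eta \leq \liminf_n \dwp(\calDXn,\calDYn)^p$; the validity of $\tilde\eta$ as a coupling closes the bound.

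The main obstacle I anticipate is the $p = \infty$ case of the limit-in-$n$ step, where essential suprema replace integrals and the weak-convergence/Fatou mechanism is less immediate. The cleanest resolution is to prove the theorem for finite $p$ first and then pass $p \to \infty$ on both sides, using the identity $\dwi = \lim_{p \to \infty} \dwp$ for bounded metrics (argued along the lines of Proposition \ref{prop:gwrealization}); the essential-supremum case then follows automatically from the finite-$p$ statements.
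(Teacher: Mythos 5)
Your proposal is correct in substance but follows a genuinely different route from the paper for both of the nontrivial inequalities. The paper never proves the two equalities separately: it establishes the single cyclic chain $\lim_n \dwp(\calDXn,\calDYn) \leq \dwp(\calDX,\calDY) \leq \dgwi(\calX,\calY) \leq \lim_n \dwp(\calDXn,\calDYn)$, where the first two links coincide with your "easy direction" and your pushforward coupling $\eta=(\psiX,\psiY)_*((\mu^*)^\infty)$, but the closing link is obtained quantitatively: for an optimal coupling $\mu_n$ of $\calDXn,\calDYn$ the paper exhibits universally measurable sets $\cnxqe \times \cnyqe$ of $\mu_n$-measure $\geq 1-2\epsilon$ on which $\rho_n \geq \dgwi - 3\epsilon$ (Propositions \ref{prop:almostuniform} and \ref{prop:dndifference}), giving $\dwp(\calDXn,\calDYn) \geq (\dgwi-3\epsilon)(1-2\epsilon)^{1/p}$. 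You instead (a) prove $\dgwi \leq \dwp(\calDX,\calDY)$ directly from Theorem \ref{thm:gwuniform} via the pointwise bound $\rho \geq \dgwi$ on $\calFX(U_X)\times\calFY(U_Y)$, and (b) prove $\dwp(\calDX,\calDY) \leq \lim_n \dwp(\calDXn,\calDYn)$ by disintegrating along $\pi_n$, lifting the finite-level optimal couplings, extracting a weak limit, and passing $m \to \infty$ by monotone convergence. Your version is more modular and makes Theorem \ref{thm:gwuniform} visibly the source of the lower bound, while turning the $\lim_n$ statement into a pure fact about Wasserstein distances of projected measures; the paper's version avoids the disintegration and weak-compactness machinery and yields explicit finite-$n$ error terms. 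Two points you should tighten: $\calFX(U_X)$ is only analytic, so you need universal measurability (exactly as the paper invokes \cite[Theorem~13.2.6]{dudley2018real} for $\cnxqe$) to assert that every coupling gives it full outer measure; and $(\rho_m\circ(\pi_m,\pi_m))^p$ is continuous but not globally bounded on $\calR_B\times\calR_B$, so the passage $n\to\infty$ should use the lower-semicontinuous Fatou form of weak convergence (\cite[Lemma~4.3]{villani2008optimal}) or a truncation justified by the boundedness of $\calX$ and $\calY$, rather than the bounded-continuous test-function form. Your reduction of $p=\infty$ to finite $p$ is consistent with the paper, which makes the same reduction via Lemma \ref{lemma: lower-semi-continuity of partial}.
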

\begin{proof}
 The projection map $\pi_{n}^{n+1} \colon \calRB^{n+1} \to \calRBn$ is $1$-Lipschitz and has the property that
 \begin{equation}
 (\pi_{n}^{n+1})_\ast (\calDX^{n+1}) = \calDXn \quad \text{and} \quad  (\pi_{n}^{n+1})_\ast (\calDY^{n+1}) = \calDYn \,.
 \end{equation}
 This implies that $\dwp(\calDXn, \calDYn) \leq \dwp(\calDX^{n+1},\calDY^{n+1})$. By a similar argument, we get $\dwp(\calDXn,\calDYn) \leq \dwp(\calDX,\calDY)$. Therefore, we have
\begin{equation}
\begin{split}
    \lim_{n \to \infty} \dwp(\calDXn,\calDYn) = \sup_n \dwp(\calDXn,\calDYn) \leq \dwp(\calDX,\calDY).
\end{split}
\end{equation}
Since $\rho$ is lower semicontinuous by Lemma \ref{lemma: lower-semi-continuity of partial}, using an argument similar to the proof of Proposition \ref{prop:gwrealization}, one can show that $\dwi(\calDX,\calDY) = \lim_{p \to \infty} \dwp(\calDX,\calDY)$. Therefore, without loss of generality, we can assume that $p<\infty$.

Now, we show that $\dwp(\calDX,\calDY) \leq \dgwi(\calX,\calY)$. Let $\mu$ be an optimal coupling between $\mu_X$ and $\mu_Y$ realizing $\dgwi(\calX,\calY)$. Let $\psiX: (X \times Y)^\infty \to \calRB$ be the map given by $(x_n,y_n)_{n=1}^\infty \mapsto \calFX((x_n)_{n=1}^\infty)$. Define $\psiY$ similarly. Then $\nu:=(\psiX,\psiY)_*(\mu^\infty)$ is a coupling between $\calDX$ and $\calDY$. We have
\begin{equation}\label{eq:p-indep}
\begin{split}
    \dwp(\calDX,\calDY) &\leq \bigg(\int \rho^p d\nu \bigg)^{1/p} = \bigg(\int_{\sP^\infty} \rho^p(\psiX((x_n,y_n)_n),\psiY((x_n,y_n)_n) \dP^\infty ((x_n,y_n)_n) \bigg)^{1/p} \\
    &= \bigg(\int_{(\sP)^\infty} \max\big(\frac{1}{2}\sup_{i,j} \mXY(x_i,y_i,x_j,y_j), \sup_{i} \dXY(x_i,y_i)\big)^p \dP^\infty((x_n,y_n)_n) \bigg)^{1/p} \\
    &\leq \costiP = \dgwi(\calX,\calY).
\end{split}
\end{equation}
It remains to show that $\dgwi(\calX,\calY) \leq \lim_n \dwp(\calDXn,\calDYn)$. Let $0 < \epsilon < 1/2$. By Proposition \ref{prop:gwrealization}, there exists $1\leq q < \infty$ so that $\dgwi(\calX,\calY) \leq \dgw{q}(\calX,\calY) + \epsilon$. Let 
\begin{equation}
\unxqe:=\{(x_i) \in X^n \colon \dwa{q}(\muX, \sum_{i=1}^n \delta_{x_i}/n) \leq \epsilon \}
\end{equation}
and define $\unyqe$ similarly. By Proposition \ref{prop:almostuniform}, if $n$ large enough, then $\muX^n(\unxqe) \geq 1-\epsilon$ and $\muY^n(\unyqe) \geq 1 - \epsilon$. If we define $\cnxqe:=\calFXn(\unxqe)$ and $\cnyqe:=\calFXn(\unyqe)$, both of these sets are analytical, hence measurable in the completion of $\calDXn$  and $\calDYn$, respectively \cite[Theorem~13.2.6]{dudley2018real}. Moreover, the measures of these sets are $\geq 1-\epsilon$. Let $\mu_n$ be a coupling realizing $\dwp(\calDXn,\calDYn)$. Note that we have 
\begin{equation}
\begin{split}
    \mu_n(\cnxqe \times \cnyqe) \geq 1 - 2\epsilon.
\end{split}
\end{equation}
By Proposition \ref{prop:dndifference}, we also have 
\begin{equation}
    \rho_n|_{\cnxqe \times \cnyqe} \geq \dgw{q}(\calX,\calY) - 2\epsilon \geq \dgwi(\calX,\calY)-3\epsilon.
\end{equation}
Therefore,
\begin{equation}
    \dwp(\calDXn,\calDYn) = \bigg(\int \rho_n^p \,d\mu_n \bigg)^{1/p} \geq (\dgwi(\calX,\calY)-3\epsilon)(1-2\epsilon)^{1/p}.
\end{equation}
This implies that 
\begin{equation}
    \lim_{n \to \infty} \dwp(\calDXn,\calDYn) \geq (\dgwi(\calX,\calY)-3\epsilon)(1-2\epsilon)^{1/p}.
\end{equation}
Since $0 < \epsilon < 1/2$ is arbitrary, we get
\begin{equation}
    \lim_{n \to \infty} \dwp(\calDXn,\calDYn) \geq \dgwi(\calX,\calY),
\end{equation}
as claimed.
\end{proof}

\begin{corollary}
Let $\mu$ be an optimal coupling realizing $\dgwi(\calX,\calY)$ and $\nu$ be the coupling between $\calDX$ and $\calDY$ induced by $\mu$ as in the proof of Theorem \ref{thm: dWp(Dx,Dy)=dGW(infty)} . More precisely, 
$$\nu:=(\psiX,\psiY)_*(\mu^\infty),$$
where $\psiX: (X \times Y)^\infty \to \calRB$ is the map given by $(x_n,y_n)_{n=1}^\infty \mapsto \calFX((x_n)_{n=1}^\infty)$, and $\psiY$ is defined similarly.
Then, $\nu$ is the optimal coupling realizing $\dwp(\calDX,\calDY)$, independent of $p \geq 1$. Furthermore, if $(x_n,y_n)$ is a uniformly distributed sequence (with respect to $\mu$) in $\sP$, then
\begin{equation}
    \rho (\calFX((x_n)),\calFY((y_n))=\dgwi(\calX,\calY).
\end{equation}
\end{corollary}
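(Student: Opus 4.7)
The plan is to exploit the chain of inequalities \eqref{eq:p-indep} established inside the proof of Theorem \ref{thm: dWp(Dx,Dy)=dGW(infty)}, which bounds $\dwp(\calDX,\calDY)$ from above by $\bigl(\int \rho^p\, d\nu\bigr)^{1/p}$ and then by $\dgwi(\calX,\calY)$. Since Theorem \ref{thm: dWp(Dx,Dy)=dGW(infty)} identifies the two endpoints of this chain, equality is forced throughout, and the corollary will essentially follow by reading off what the estimates already give us.

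For the first claim with $1 \leq p < \infty$, \eqref{eq:p-indep} together with Theorem \ref{thm: dWp(Dx,Dy)=dGW(infty)} yields
$$\dgwi(\calX,\calY) = \dwp(\calDX,\calDY) \leq \Bigl(\int \rho^p\, d\nu\Bigr)^{1/p} \leq \dgwi(\calX,\calY),$$
so $\nu$ realizes the $p$-Wasserstein distance for every finite $p$. For $p=\infty$, the same computation used in \eqref{eq:p-indep} shows that the integrand
$$\max\bigl(\tfrac12 \sup_{i,j} \mXY(x_i,y_i,x_j,y_j),\ \sup_i \dXY(x_i,y_i)\bigr)$$
is pointwise bounded by $\dgwi(\calX,\calY)$ on $\sP^\infty$, a set of full $\mu^\infty$-measure. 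Hence $\rho \leq \dgwi(\calX,\calY)$ holds $\nu$-almost everywhere, so the $\nu$-essential supremum of $\rho$ is at most $\dgwi(\calX,\calY)$; combined with the identity $\dwi(\calDX,\calDY) = \dgwi(\calX,\calY)$ from Theorem \ref{thm: dWp(Dx,Dy)=dGW(infty)}, this shows $\nu$ is optimal for $p=\infty$ as well, independent of $p$.

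For the second claim, suppose $(x_n, y_n)$ is $\mu$-uniformly distributed in $\sP$. Every non-empty relatively open subset $U \subseteq \sP$ has $\mu(U) > 0$, so the Portmanteau theorem applied to the empirical measures $\tfrac{1}{n}\sum_{i=1}^n \delta_{(x_i,y_i)}$ forces infinitely many terms $(x_i, y_i)$ to lie in $U$. Thus $\{(x_i, y_i)\}_{i \geq 1}$ is dense in $\sP$, and consequently $\{(x_i, y_i, x_j, y_j)\}_{i,j \geq 1}$ is dense in $\sP \times \sP = \sPP$. Because $\mXY$ and $\dXY$ are continuous (as noted in Remark \ref{rem:support}), their suprema over these dense subsets coincide with their suprema over the full supports, and therefore
$$\rho\bigl(\calFX((x_n)), \calFY((y_n))\bigr) = \max\Bigl(\tfrac12 \sup_{\sPP} \mXY,\ \sup_{\sP} \dXY\Bigr) = \dgwi(\calX,\calY).$$

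No substantive obstacle is anticipated, as the result is largely a repackaging of the inequalities already carried out in the proof of Theorem \ref{thm: dWp(Dx,Dy)=dGW(infty)}. The two mildly delicate points are the passage from essential supremum to pointwise supremum in the $p=\infty$ case, which is valid because the relevant cost functions are continuous and $\sP^\infty$ has full $\mu^\infty$-measure, and the density deduction from uniform distribution, which is standard via Portmanteau.
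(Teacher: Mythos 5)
Your proof is correct. The first claim (optimality of $\nu$ for every $p$) is argued exactly as in the paper: the chain $\dwp(\calDX,\calDY) \leq \bigl(\int \rho^p\, d\nu\bigr)^{1/p} \leq \dgwi(\calX,\calY)$ from \eqref{eq:p-indep} is squeezed by Theorem \ref{thm: dWp(Dx,Dy)=dGW(infty)}; your separate treatment of $p=\infty$ via the pointwise bound on a full-measure set is a harmless refinement. For the second claim, however, you take a genuinely different route. The paper simply cites the proof of Theorem \ref{thm:gwuniform}, where the equality falls out of the two-sided squeeze $\alpha \leq \max\{\cdots\} \leq \costiP = \dgwi(\calX,\calY) = \alpha$; the lower bound there ultimately rests on the discrete $mm$-field approximation argument used to prove $\dgwi(\calX,\calY) \leq \alpha$. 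You instead prove the equality directly: uniform distribution plus Portmanteau gives density of $\{(x_i,y_i)\}$ in $\sP$ (and hence of the quadruples in $\sPP = \sP \times \sP$), and continuity of $\mXY$ and $\dXY$ lets you replace suprema over the dense subset by suprema over the supports, yielding $\rho(\calFX((x_n)),\calFY((y_n))) = \costiP = \dgwi(\calX,\calY)$ in one step. Your argument is more self-contained and more elementary, since it does not invoke the harder half of Theorem \ref{thm:gwuniform}; the paper's version buys brevity by reusing machinery already in place.
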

\begin{proof}
    By \eqref{eq:p-indep}, we have
    \begin{equation}
        \dwp(\calDX,\calDY) \leq \bigg(\int \rho^p d\nu \bigg)^{1/p} \leq \dgwi(\calX,\calY),
    \end{equation}
    for all $p \geq 1$. Hence, by Theorem \ref{thm: dWp(Dx,Dy)=dGW(infty)}, the inequalities above are equalities. This shows the optimality of $\nu$ independent of $p \geq 1$.
    
    The equality $\rho (\calFX((x_n)),\calFY((y_n))=\dgwi(\calX,\calY)$ is shown in the proof of Theorem \ref{thm:gwuniform}.
\end{proof}

\begin{remark}\label{rem:discretezation}
By Theorem $\ref{thm: dWp(Dx,Dy)=dGW(infty)}$, $\dwp(\calDXn,\calDYn)$ can be used as an approximation to $\dgwi(\calX,\calY)$. To discretize this approximation, one can take i.i.d. samples from $(X^n,\mu^n)$ and $(Y^n,\nu^n)$ and form empirical measures $\calE_{n,X}$ and $\calE_{n,Y}$. Then, $\dwp(((\calFXn)_*(\calE_{n,X}), (\calFYn)_*(\calE_{n,Y}) )$ can be taken as an approximation to $\dwp(\calDXn,\calDYn)$.
\end{remark}

\begin{remark}\label{rem:p-independence}
By Theorem \ref{thm: dWp(Dx,Dy)=dGW(infty)}, $\dwp(\calDX,\calDY)$ is independent of $p \geq 1$. This can be explained as follows. Since we are using the sup-distance $\rho$ on $\calRB$ and almost every sequence in a metric measure space is uniformly distributed, if $\dwp(\calDX,\calDY) \leq r$, then there are uniformly distributed sequences in $\calX$ and $\calY$ whose augmented distance matrices are $r$-close to each other, which forces $\dwa{q}(\calDX,\calDY) \leq r$ for any $q \geq 1$.
\end{remark}

The following theorem gives a geometric representation of the isomorphism classes of $mm$-fields via the Urysohn universal field.

\begin{theorem}\label{prop:mmrespresentation}
Let $\fmm{I}_B$ denote the moduli space of isometry classes of compact $mm$-fields over $B$ endowed with the distance $\dgwi$. Let $\fmm{G}_B$ be the group of automorphisms of the Urysohn field $\fmm{U}_B$ and $\lawsB$ be the set of compactly supported laws on $\urysohnB$, endowed with the distance $\dwi$. The group $\fmm{G}_B$ acts on $\lawsB$ by $g\cdot \mu:=g_*(\mu)$. Then, $\fmm{I}_B$ is isomorphic to the orbit space of this action; that is, $\fmm{I}_B\simeq \lawsB / \fmm{G}_B$, where the orbit space is equipped with the quotient metric, as in \eqref{eq:autb}, which can be expressed as
\begin{equation*}
        d([\mu],[\nu])=\inf_{g \in \fmm{G}_B} \dwi(\mu,g_*(\nu)).
\end{equation*} 
\end{theorem}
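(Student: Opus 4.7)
The plan is to exhibit an explicit isometry $\bar\Theta\colon \lawsB/\autoB \to \fmm{I}_B$ by sending a law $\mu$ on $\urysohnB = (U,d_U,\pi_U)$ to the isomorphism class of the $mm$-field $\fmm{X}_\mu := (\supp\mu, d_U|_{\supp\mu}, \pi_U|_{\supp\mu}, \mu)$. Well-definedness of the induced map on orbits is immediate: an automorphism $g \in \autoB$ carries $\supp\mu$ isometrically onto $\supp(g_*\mu)$ and pushes $\mu$ to $g_*\mu$, so $\fmm{X}_\mu$ and $\fmm{X}_{g_*\mu}$ are isomorphic $mm$-fields over $B$.

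Next I would establish that $\bar\Theta$ is a bijection. Surjectivity is a direct application of the universality statement in Theorem \ref{thm:urysohnmap}(ii): any compact $mm$-field $\fmm{X}$ admits an isometric embedding $\iota\colon \fmm{X} \hookrightarrow \urysohnB$ over $B$, and $\iota_*\muX$ is then a compactly supported law in $\lawsB$ with $\fmm{X}_{\iota_*\muX} \simeq \fmm{X}$. For injectivity, suppose $\fmm{X}_\mu \simeq \fmm{X}_\nu$ via a measure-preserving isometry $\phi\colon \supp\mu \to \supp\nu$. Proposition \ref{compact homogeneity} extends this partial isometric matching to an automorphism $g \in \autoB$, and since $\phi$ is measure-preserving and $\mu,\nu$ are supported on $\supp\mu,\supp\nu$ respectively, we obtain $g_*\mu=\nu$, so $[\mu]=[\nu]$ in $\lawsB/\autoB$.

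The metric assertion is the substantive part. One inequality $\dgwi(\fmm{X}_\mu,\fmm{X}_\nu) \le d([\mu],[\nu])$ is immediate from Proposition \ref{prop:gwembedding}: for any $g \in \autoB$, the inclusion $\supp\mu \hookrightarrow U$ and the embedding $g\colon \supp\nu \to U$ are isometric embeddings over $B$, so $\dgwi(\fmm{X}_\mu,\fmm{X}_\nu)=\dgwi(\fmm{X}_\mu,\fmm{X}_{g_*\nu}) \le \dwi(\mu, g_*\nu)$, and taking the infimum over $g$ gives the claim. For the reverse inequality I would invoke Proposition \ref{prop:gwrealization} to obtain an optimal coupling $\mu^\ast$ realizing $\dgwi(\fmm{X}_\mu,\fmm{X}_\nu)=:r$. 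Setting $R := \supp\mu^\ast \subseteq \supp\mu \times \supp\nu$, Remark \ref{rem:support} (essential-sup equals sup over $\supp\mu^\ast$) yields $\dis_{\pi_X,\pi_Y}(R)=2r$, so Lemma \ref{lem:connect_by_relation} produces a compact $B$-field $\fmm{Z}:=\fmm{X}_\mu \coprod_{R,r} \fmm{X}_\nu$ in which both are isometrically included. In $\fmm{Z}$, every pair $(x,y)\in R$ satisfies $d_Z(x,y)=r$, so the coupling $\mu^\ast$, viewed as a measure on $Z\times Z$, witnesses $\dwi^Z(\mu,\nu) \le r$.

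Finally, embed $\fmm{Z}$ isometrically into $\urysohnB$ via some $\Lambda$; composing with the inclusions gives isometric embeddings $\tilde\iota_\mu\colon \supp\mu \hookrightarrow U$ and $\tilde\iota_\nu\colon \supp\nu \hookrightarrow U$ over $B$ with $\dwi^U((\tilde\iota_\mu)_*\mu,(\tilde\iota_\nu)_*\nu) \le r$. By Proposition \ref{compact homogeneity}, $\tilde\iota_\mu$ differs from the identity inclusion of $\supp\mu$ by an automorphism $g_\mu \in \autoB$, and likewise $\tilde\iota_\nu = g_\nu$ on $\supp\nu$. Since each $g$ acts on $\lawsB$ by isometries of $\dwi$, we obtain $\dwi(\mu, (g_\mu^{-1}g_\nu)_*\nu) = \dwi((g_\mu)_*\mu,(g_\nu)_*\nu) \le r$, hence $d([\mu],[\nu]) \le r = \dgwi(\fmm{X}_\mu,\fmm{X}_\nu)$. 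The main obstacle is precisely this last step: one must verify that the relation-based gluing of Definition \ref{def:connecting_fields} behaves correctly when $R$ is the possibly-non-correspondence support of an optimal $\dgwi$-coupling, and that the passage from an abstract gluing to embeddings in $\urysohnB$ can be reconciled (via compact homogeneity) with the fixed inclusions used to define $\lawsB/\autoB$.
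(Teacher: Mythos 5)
Your argument is correct and follows essentially the same route as the paper's proof: the same bijection (written in the inverse direction), Proposition \ref{prop:gwembedding} for the easy inequality, and the optimal coupling $\mu^\ast$ with $R=\supp\mu^\ast$ fed into the gluing $\coprod_{R,r}$ followed by an embedding into $\urysohnB$ for the reverse one. The ``obstacle'' you flag at the end is not one: Lemma \ref{lem:connect_by_relation} is stated for arbitrary relations (only Lemma \ref{lem:connect_hausdorff} needs a correspondence), and your use of Proposition \ref{compact homogeneity} to reconcile the embeddings with the fixed inclusions is exactly what the paper's appeal to compact homogeneity does implicitly.
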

\begin{proof}
Given a compact $mm$-field $\calX$ over $B$ and an isometric embedding $\iota: \calX \to \urysohnB$, we have $(\iota)_*(\muX) \in \lawsB$. This induces a well-defined map from $\Psi \colon \fmm{I}_B \to \lawsB / \fmm{G}_B$ because of the compact homogeneity of $\urysohnB$. 

By Proposition \ref{prop:gwembedding}, $\dgwi(\calX,\calY) \leq d(\Psi(\calX),\Psi(\calY))$.

To showthe opposite inequality, let $\mu$ be an optimal coupling realizing $\dgwi(\calX,\calY)$ and $R \subseteq X \times Y$ the support of $\mu$. If we let $r:=\dgwi(\calX,\calY)$, then $\dis_{\pi_X,\pi_Y}=2r$. Let $\calZ:=\calX \coprod_{R,r} \calY$. If we consider $\mu_X$ and $\mu_Y$ as measures on $Z$ and $\mu$ as a measure on $Z \times Z$, then $\dwi^Z(\mu_X,\mu_Y) \leq \sup_{\supp(\mu)} d_Z = \sup_R d_Z  \leq r$. If we let $\iota \colon \calZ \to \urysohnB$ be an isometric embedding, then
\begin{equation}
            d(\Psi(\calX),\Psi(\calY)) \leq \dwi(\iota_*(\muX),\iota_*(\muY)) \leq \dwi^Z(\muX,\muY) \leq \dgwi(\calX,\calY),
\end{equation}
as desired.
\end{proof}


\section{Topological Multifiltrations and Their Stability}\label{sec:filtrations}

One of the key insights of geometric and topological data analysis is that one can study the geometry of data through associated filtered spaces by analyzing the changes in topological structure along the filtration. The neighborhood (or offset) and Vietoris-Rips filtrations are two prominent examples of such constructs. Here, we investigate field analogues of these filtrations.

\subsection{Neighborhood Multifiltrations}\label{sec:nbhd}

Let $(E,d_E)$ be a metric space and $X \subseteq E$ a compact subspace. Given $r\geq 0$, the \emph{r-neighborhood} $N^r(X,E)$ of $X$ in $E$ is defined by
\begin{equation}
N^r(X,E):=\{y \in E: \dist(y,X) \leq r\}.
\end{equation}
Clearly, $N^r(X,E) \subseteq N^s(X,E)$ if $r \leq s$, so $N^*(X,E)$ is a filtration. Since $X$ is compact, denoting the closed ball in $E$ of radius $r$ centered at $x \in X$ by $B_r(x,E)$, we can  express the neighborhood filtration as follows:
\begin{equation}
        N^r(X,E) =\{y \in E: \exists x \in X \text{ such that } d_E(x,y) \leq r\} =\bigcup_{x \in X} B_r(x,E) \,.
\end{equation}

\begin{definition}[Field neighborhood bifiltration]
    Let $\calE = (E, d_E, \pi_E)$ be a $B$-field and $X \subseteq E$ a compact subspace. For $r,s \geq 0$ define 
    $$N^{r,s}(X,\calE):= \{y \in E\colon y \in N^r (X,E) \ \text{and} \ d_B(\pi_E(x),\pi_E(y)) \leq s\}.$$ 
    If $r \leq r'$ and $s \leq s'$, then $N^{r,s}(X,\calE) \subseteq N^{r',s'}(X,\calE)$ so that $N^{*,*}(X,\calE)$ forms a bifiltration that we call the \emph{field neighborhood bifiltration} of $X$ in $\calE$. 
    \end{definition}

    For $y \in E$, if we let $B_{r,s}(y,\calE):= \{y' \in E: d_E(y,y') \leq r \ \text{and} \ d_B(\pi(y),\pi(y')) \leq s\}$, then 
    \begin{equation}
    N^{r,s}(X,\calE)=\cup_{x \in X} B_{r,s}(x,\calE) .
    \end{equation}

In many applications, $X$ is a point cloud and it is desirable to have filtrations that are robust to outliers. This requires taking the density of points into account. From that point of view, instead of considering $X$ as a subspace of $E$, it is more informative to consider it as a probability measure on $E$ given by $\mu_X=\Sigma_{x \in X} \delta_x /|X|$. We can then express the field neighborhood filtration in terms of $\mu_X$ as

\begin{equation}
    \begin{split}
        N^{r,s}(X,\calE) &=\{y \in E: \exists x \in X \text{ such that } d_E(x,y) \leq r\ \text{and} \ d_B(f(x),f(y)) \leq s\} \\
                         &=\{y \in E: B_{r,s}(y,\calE) \cap X \neq \emptyset \} \\
                         &=\{y \in E: \mu_X(B_{r,s}(y,E))>0\}.
    \end{split}
\end{equation}

This last expression motivates the introduction of the following trifiltered space associated with a probability measure on the domain of a $B$-field. (This may be viewed as the $mm$-field analogue of \cite{blumberg2022stability}.) We abuse notation and also denote $mm$-fields by $\calE$.
\begin{definition}
    Let $\calE=(E, d_E, \pi_E, \mu_E)$ be an $mm$-field over $B$. For $r, s, t \geq 0$, define
    $$N^{r,s,t}(\calE):= \{y \in E: \mu_E (B_{r,s}(y,\calE)) \geq 1-t\}.$$
    If $r \leq r'$, $s \leq s'$ and $t \leq t'$, then $N^{r,s,t}(\calE) \subseteq N^{r',s',t'}(\calE)$, so that $N^{*,*,*}(\calE)$ forms a trifiltration that we refer to as the \emph{$mm$-field neighborhood trifiltration} of $\calE$.
\end{definition}

\begin{figure}[ht]
\begin{center}
\begin{tabular}{ccc}
\includegraphics[width=0.3\linewidth]{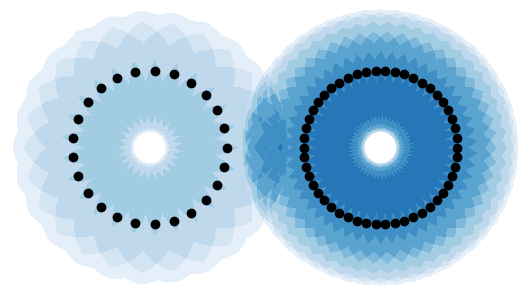} &
\includegraphics[width=0.3\linewidth]{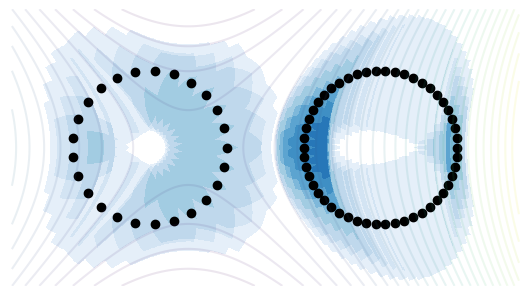} &
\includegraphics[width=0.3\linewidth]{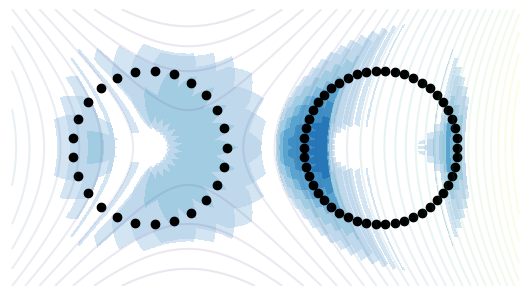} \\
(a) & (b) & (c)
\end{tabular}
\caption{Neighborhoods associated with a scalar field defined on a finite set of points $X$ sampled from two circles: (a) $N^{r}(X,E)$; (b) $N^{r,s}(X,\calE)$; (c) $N^{r,s,t}(\calE)$. The parameter values are $r=0.8$, $s=0.1$ and $t=0.99$.}
\label{fig:nbhd}
\end{center}
\end{figure}

Figure \ref{fig:nbhd} shows an example in which $X$ is a set of points sampled from two circles and $E$ is a rectangle containing $X$. A scalar field is depicted through its contour lines and $\mu_E$ is the empirical measure on $E$ induced by $X$. Panels (a), (b), and (c) compare the neighborhoods $N^{r}(X,E)$, $N^{r,s}(X,\calE)$, and $N^{r,s,t}(\calE)$, respectively, for parameter values $r=0.8$, $s=0.1$ and $t=0.99$. Note that in (c), only the more densely populated parts of the field metric neighborhood (shown in (b)) remain. In each plot, the color intensity at each point represents how densely the corresponding neighborhood of that point is populated by elements in the original point cloud.

\medskip

The main goal of this section is to establish stability properties for these multiparameter filtrations. We denote by $\dI$, $\dha$ and $\dpr$ the interleaving \cite{bubenik2015metrics}, Hausdorff \cite{burago2001course}, and Prokhorov \cite{dudley2018real} distances, respectively.

\begin{theorem}\label{thm:nbhd_stability}
    Let $\calE = (E, d_E, f, \mu)$ and $\calF = (E, d_E, g, \nu)$ be the $mm$-fields over $B$ and $X, Y \subseteq E$ be compact subspaces. Then, the following inequalities hold:
    \begin{enumerate}[{\rm (i)}]
        \item $\dI(N^{*,*}(X,\calE), N^{*,*}(Y,\calF)) \leq \dha(X,Y)+2\sup_{y \in E}d_B(f(y),g(y)).$
        \item $\dI(N^{*,*,*}(\calE), N^{*,*,*}(\calF)) \leq \dpr(\mu,\nu)+2\sup_{y \in E}d_B(f(y),g(y))$.
    \end{enumerate}
\end{theorem}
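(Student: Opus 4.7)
The plan is to prove both inequalities by exhibiting an explicit $\epsilon$-interleaving for every $\epsilon$ strictly larger than the claimed bound and then passing to the infimum. In each case the argument is symmetric in $\calE$ and $\calF$, so it suffices to establish a one-sided inclusion of multifiltrations. Throughout, write $\beta := \sup_{y \in E} d_B(f(y), g(y))$.

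For (i), fix $\alpha > \dha(X, Y)$ and set $\epsilon = \alpha + 2\beta$. Given $y \in N^{r,s}(X, \calE)$, I would pick $x \in X$ with $d_E(x,y) \leq r$ and $d_B(f(x), f(y)) \leq s$, then $x' \in Y$ with $d_E(x, x') \leq \alpha$. The estimate $d_E(x',y) \leq r + \alpha \leq r + \epsilon$ is immediate, and a four-term triangle inequality
\begin{equation*}
d_B(g(x'), g(y)) \leq d_B(g(x'), f(x')) + d_B(f(x'), f(x)) + d_B(f(x), f(y)) + d_B(f(y), g(y))
\end{equation*}
combined with $f$ being 1-Lipschitz yields $d_B(g(x'), g(y)) \leq \beta + \alpha + s + \beta = s + \epsilon$. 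Hence $y \in N^{r+\epsilon, s+\epsilon}(Y, \calF)$, which is the desired interleaving.

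For (ii), fix $\delta > \dpr(\mu, \nu)$ and set $\epsilon = \delta + 2\beta$. The key pointwise step is the pair of inclusions
\begin{equation*}
B_{r,s}(y, \calE) \subseteq B_{r,\, s + 2\beta}(y, \calF), \qquad \bigl(B_{r,\, s+2\beta}(y, \calF)\bigr)^\delta \subseteq B_{r+\delta,\, s + 2\beta + \delta}(y, \calF),
\end{equation*}
where $(\cdot)^\delta$ denotes the open $\delta$-neighborhood in $(E, d_E)$. The first follows from the pointwise estimate $d_B(g(y), g(z)) \leq 2\beta + d_B(f(y), f(z))$, and the second from $g$ being 1-Lipschitz. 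Combining these with the standard Prokhorov inequality $\nu(A^\delta) \geq \mu(A) - \delta$ (valid since $\dpr(\mu, \nu) < \delta$) applied to $A = B_{r, s+2\beta}(y, \calF)$, for $y \in N^{r,s,t}(\calE)$ I obtain
\begin{equation*}
\nu\bigl(B_{r+\epsilon,\, s + \epsilon}(y, \calF)\bigr) \;\geq\; \nu\bigl(B_{r+\delta,\, s+2\beta+\delta}(y, \calF)\bigr) \;\geq\; \mu\bigl(B_{r,s}(y,\calE)\bigr) - \delta \;\geq\; 1 - (t + \epsilon),
\end{equation*}
so $y \in N^{r+\epsilon,\, s+\epsilon,\, t+\epsilon}(\calF)$. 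Sending $\alpha \downarrow \dha(X,Y)$ and $\delta \downarrow \dpr(\mu, \nu)$ gives the two claimed bounds.

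The only subtle bookkeeping is ensuring that a single shift $\epsilon$ can simultaneously absorb the $\delta$-shift contributed by the Prokhorov inequality (which naturally shifts the $r$- and $t$-coordinates) and the $2\beta$-shift forced by replacing $f$ with $g$ (which shifts the $s$-coordinate); the arithmetic works precisely because $\epsilon = \delta + 2\beta$ dominates each individually, and monotonicity of $N^{r,s,t}$ in each parameter lets one uniformize the three shifts. This is the main place where care is needed; the rest reduces to straightforward triangle-inequality manipulations.
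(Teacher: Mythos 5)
Your proposal is correct and follows essentially the same route as the paper's proof: part (i) via the same four-term triangle inequality after choosing a nearby point of $Y$, and part (ii) via the inclusion $B_{r,s}(y,\calE) \subseteq B_{r,s+2\beta}(y,\calF)$, the absorption of the $\delta$-neighborhood into a shifted ball using that $g$ is 1-Lipschitz, and the Prokhorov inequality. The only cosmetic difference is that you work with strict upper bounds $\alpha > \dha(X,Y)$ and $\delta > \dpr(\mu,\nu)$ and pass to the limit, whereas the paper plugs in the exact values directly.
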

\begin{proof}
    (i) Let $\epsilon=\dha(X,Y)$ and $\epsilon'=\sup_{y \in E}d_B(f(y),g(y))$. If $z \in N^{r,s}(X,\calE)$, then there exists $x \in X$ such that $d_E(x,z) \leq r$ and $d_B(f(x),f(z)) \leq s$. Moreover, there exists $y \in Y$ such that $d_E(x,y) \leq \epsilon$. Then, $d_E(y,z) \leq r+\epsilon$ and 
    \begin{equation}
    d_B(g(y), g(z)) \leq d_B(g(y),f(y))+d_B(f(y),f(x))+d_B(f(x),f(z))+d_B(f(z),g(z)) \leq s+\epsilon+2\epsilon' .
    \end{equation}
    Therefore, $N^{r,s}(X,\calE) \subseteq N^{r+\epsilon+2\epsilon',s+\epsilon+2\epsilon'}(Y, \calF)$. Similarly, $N^{r,s}(Y,\calF) \subseteq N^{r+\epsilon+2\epsilon',s+\epsilon+2\epsilon'}(X, \calE)$. This proves the first assertion.

    \smallskip

    (ii) Let $\epsilon=\dpr(\mu,\nu)$, $\epsilon'=\sup_{y \in E}d_B(f(y),g(y))$ and note that $B_{r,s}(x,\calE) \subseteq B_{r,s+2\epsilon'}(x,\calF)$. If $y \in N^\epsilon(B_{r,s+2\epsilon'}(x,\calF))$, then there exists $z \in B_{r,s+2\epsilon'}(x,\calF)$ such that $d_E(y,z) \leq \epsilon$. Then, $d_E(x,z) \leq r+\epsilon$ and
    \begin{equation}
    d_B(g(y),g(x)) \leq d_B(g(y),g(z))+d_B(g(z),g(x)) \leq s+\epsilon+2\epsilon' .
    \end{equation}
    This shows that $N^\epsilon(B_{r,s+2\epsilon'}(x,\calF)) \subseteq B_{r+\epsilon+2\epsilon',s+\epsilon+2\epsilon'}(x,\calF)$. Therefore, if $x \in N^{r,s,t}(\calE)$, we have
    \begin{equation}
        \begin{split}
            \nu(B_{r+\epsilon+2\epsilon',s+\epsilon+2\epsilon'}(x,\calF)) &\geq \nu(N^\epsilon(B_{r,s+2\epsilon'}(x,\calF))) \\
            &\geq \mu(B_{r,s+2\epsilon'}(x,\calF))-\epsilon \\
            &\geq \mu(B_{r,s}(x,\calE))-\epsilon \geq 1-(t+\epsilon).
        \end{split}
    \end{equation}
    This shows that $N^{r,s,t}(\calE) \subseteq N^{r+\delta,s+\delta,t+\delta}(\calF)$, where $\delta=\epsilon+2\epsilon'$. Similarly, one can argue that $N^{r,s,t}(\calF) \subseteq N^{r+\delta,s+\delta,t+\delta}(\calE)$, completing the proof of (ii).
\end{proof}

\subsection{Vietoris-Rips Multifiltrations}

For a metric space $(X,d_X)$, the Vietoris-Rips simplicial filtration $\vr^*(X)$ is given by
$$\vr^r(X):=\{A \subseteq X \colon \text{$A$ is finite and $\diam(A) \leq r$}\}.$$ 
To modify this filtration for fields, recall that the \emph{radius} of a subspace $C$ of a metric space $(B,d_B)$ is defined as
$$\rad(C):=\inf_{b \in B} \sup_{c \in C} d_B(b,c).$$
It is simple to verify that $\diam(C) \leq 2\,\rad(C)$.

\begin{definition}
    Let $\calX = (X,d_X, \pi)$ be a metric field over $B$. We define the {\em metric field Vietoris-Rips bifiltration} $\vr^{\ast,\ast} (\calX)$ by
    $$\vr^{r,s}(\calX):=\{A \subseteq X \colon \text{$A$ is finite, $\diam(A) \leq r$, and $\rad(\pi(A)) \leq s/2$} \},$$
    for $r,s \geq 0$
\end{definition}

For $mm$-fields we can define a Vietoris-Rips trifiltration, as follows.

\begin{definition}
    Let $\calX = (X,d_X, \pi,\mu)$ be an $mm$-field over $B$. We define the {\em $mm$-field Vietoris-Rips trifiltration} $\vr^{\ast,\ast,\ast}(\calX)$ by
    \begin{equation*}
        \begin{split}
            \vr^{r,s,t}(\calX,\mu):=\{&\{A_0,\dots,A_n \}\colon   \text{$A_i \subseteq X$ and $A_{i-1} \subseteq A_{i}$, $\forall i$}, \\ 
            &\diam(A_i) \leq r,\, \rad(\pi(A_i)) \leq s/2,\, \mu(A_i) \geq 1-t\},
        \end{split}
    \end{equation*}
    for $r,s,t \geq 0$.
\end{definition}

    Note that if $\calX$ is a finite $mm$-field, then $\vr^{r,s,t}(\calX)$ is a full subcomplex of the barycentric subdivision of $\vr^{r,s}(\calX)$.

Figure \ref{fig:VRrst} shows an example in which $X$ is a weighted set of points sampled from a circle with the size of the dots indicating their weights. A scalar field is depicted through its contour lines. Panels (a), (b), and (c) compare the $\vr$-complex $\vr^r (X)$, the $m$-field $\vr$-complex $\vr^{r,s}(\calX)$, and the $mm$-field $\vr$-complex $\vr^{r,s,t}(\calX)$, respectively, for parameter values $r=1.5$, $s=1$ and $t=0.1$. Note how the simplices in $\vr^r(X)$ intersecting with many contour lines get removed in $\vr^{r,s}(\calX)$. As remarked above,  $\vr^{r,s,t}(\calX)$ sits in the barycentric subdivision of $\vr^{r,s}(\calX)$ and simplices away from high weight regions get removed.

\begin{figure}[ht]
\begin{center}
\begin{tabular}{ccc}
\includegraphics[width=0.2\linewidth]{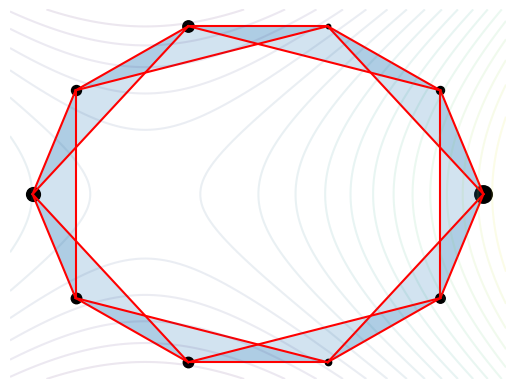} \qquad & \qquad
\includegraphics[width=0.2\linewidth]{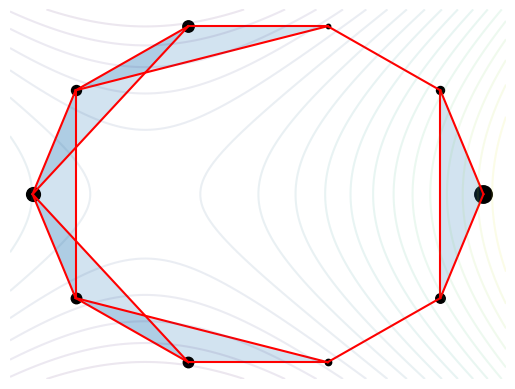} \qquad & \qquad
\includegraphics[width=0.2\linewidth]{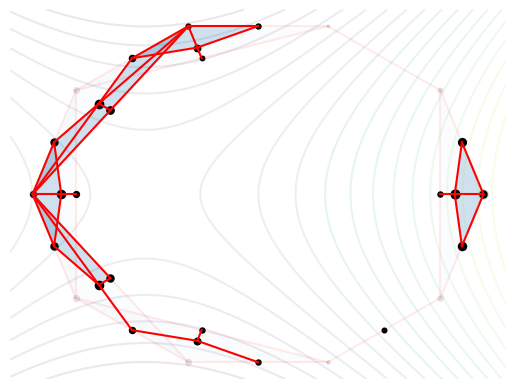}
\\
(a) \qquad & \qquad (b) \qquad & \qquad (c)
\end{tabular}
\caption{Simplicial complexes associated with a scalar field defined on a weighted finite set of points $X$: (a) the $\vr$-complex $\vr^r (X)$; (b) the $m$-field $\vr$-complex $\vr^{r,s}(\calX)$; the $mm$-field $\vr$-complex $\vr^{r,s,t}(\calX)$. The parameter values are $r=1.5$, $s=1$ and $t=0.1$.}
\label{fig:VRrst}
\end{center}
\end{figure}

We conclude with stability results for the Vietoris-Rips multiparameter filtrations defined above. The corresponding results for metric spaces and $mm$-spaces have been obtained in \cite{chazal2009gromov} and \cite{blumberg2022stability}, respectively.
\begin{theorem}
    Let $\calX$ and $\calY$ be metric measure $B$-fields. Then,
    \begin{enumerate}[{\rm (i)}]
        \item $\dHI(\vr^{*,*}(\calX), \vr^{*,*}(\calY)) \leq 2 \dgh(\calX,\calY)$ and
        \item $\dHI(\vr^{*,*,*}(\calX), \vr^{*,*,*}(\calY)) \leq 2 \dgp(\calX,\calY)$,
    \end{enumerate}
    where $\dHI$ denotes the homotopy interleaving distance \cite{blumberg2017universality}.
\end{theorem}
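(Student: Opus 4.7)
My plan is to derive both bounds by constructing strict multiparameter interleavings of the respective Vietoris--Rips filtrations with shift dictated by the distortion of a near-optimal correspondence (for (i)) or coupling (for (ii)); strict simplicial interleavings pass to homotopy interleavings with the same shift on geometric realizations, closing the argument. The unifying observation is that a correspondence $R$ (resp.\ a coupling $\mu$ with support essentially on a low-distortion relation) induces a monotone $R$-image operator on subsets, and the distortion characterizations from Theorem \ref{thm:gh_distortion} and Theorem \ref{thm:gp_coupling} translate small field distances into small shifts in the parameters $(r,s)$, resp.\ $(r,s,t)$.

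For (i), fix $\eta > \dgh(\calX,\calY)$. By Theorem \ref{thm:gh_distortion}, choose a correspondence $R \subseteq X \times Y$ with $\dis_{\pi_X,\pi_Y}(R) < 2\eta$, and pick selectors $\phi \colon X \to Y$ and $\psi \colon Y \to X$ with graphs contained in $R$. For $A \in \vr^{r,s}(\calX)$, the distortion bound gives $d_Y(\phi(x),\phi(x')) \leq d_X(x,x') + 2\eta \leq r+2\eta$, so $\diam(\phi(A)) \leq r+2\eta$; and if $b^\ast \in B$ realizes $\rad(\pi_X(A)) \leq s/2$, then $d_B(b^\ast,\pi_Y(\phi(x))) \leq s/2 + \eta$, so $\rad(\pi_Y(\phi(A))) \leq (s+2\eta)/2$. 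Thus $\phi$ induces a simplicial map $\vr^{r,s}(\calX) \to \vr^{r+2\eta,s+2\eta}(\calY)$, and symmetrically for $\psi$. For the round trip, the distortion estimate applied to the pairs $(x,\phi(x)),(\psi\phi(x),\phi(x)) \in R$ yields $d_X(x,\psi\phi(x)) \leq 2\eta$ and $d_B(\pi_X(x),\pi_X(\psi\phi(x))) \leq 2\eta$, so $A \cup \psi\phi(A)$ is a simplex in $\vr^{r+4\eta,s+4\eta}(\calX)$. This shows $\psi_\ast \phi_\ast$ is contiguous to the inclusion into $\vr^{r+4\eta,s+4\eta}(\calX)$, and symmetrically in the other direction, producing a strict $(2\eta,2\eta)$-interleaving. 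Letting $\eta \to \dgh(\calX,\calY)$ gives (i).

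For (ii), fix $\eta > 2\dgp(\calX,\calY)$. Theorem \ref{thm:gp_coupling} produces a coupling $\mu \in C(\mu_X,\mu_Y)$ and a Borel $R \subseteq X \times Y$ with $\mu(R) \geq 1 - \eta/2$ and $\dis_{\pi_X,\pi_Y}(R) \leq \eta$. Since the $mm$-field Vietoris--Rips trifiltration has vertices given by subsets of $X$ and simplices given by inclusion chains, any monotone set operator passes to a simplicial map. I use the $R$-image operator $A \mapsto A^R := \{y \in Y : \exists x \in A,\, (x,y) \in R\}$; the diameter and radius estimates proceed exactly as in (i), and the key new measure estimate is
\begin{equation*}
\mu_Y(A^R) \;\geq\; \mu(A \times A^R) \;\geq\; \mu(R \cap (A \times Y)) \;\geq\; \mu_X(A) - \mu((X\times Y)\setminus R) \;\geq\; 1 - t - \eta/2,
\end{equation*}
so $A^R \in \vr^{r+\eta,s+\eta,t+\eta/2}(\calY)$. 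The reverse direction uses the analogous operator for $R^{-1}$, and the round trip $A \mapsto (A^R)^{R^{-1}}$ contains $A \cap X_0$ where $X_0 \subseteq X$ is the $X$-projection of $R$; since $\mu_X(X \setminus X_0) \leq \eta/2$, the exceptional set is absorbed by a further $\eta/2$ shift in the $t$-parameter, and the diameter/radius estimates from (i) give the contiguity within the enlarged filtration level. This yields a strict $\eta$-interleaving in each of the three coordinates, and sending $\eta \to 2\dgp(\calX,\calY)$ proves (ii).

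The main technical obstacle will be the contiguity verification in (ii): unlike in the metric case, the round-trip operator $A \mapsto (A^R)^{R^{-1}}$ need not contain $A$, only $A \cap X_0$, so one must carefully track how passing through the exceptional $\mu_X$-null-ish set $X \setminus X_0$ perturbs the inclusion chain, and absorb this by the extra shift in the measure parameter. This parallels the measure-theoretic bookkeeping in the $mm$-space case treated in \cite{blumberg2022stability}, and the functional parameter $s$ is handled independently by the same radius estimate used in (i).
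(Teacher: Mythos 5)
Part (i) is correct and is essentially the paper's argument transported through the correspondence characterization (Theorem \ref{thm:gh_distortion}) instead of a common isometric embedding; the selectors $\phi,\psi$ play exactly the role of the paper's nearest-point maps $f,g$, and the contiguity check via $A\cup\psi\phi(A)$ matches the paper's $A\cup g(f(A))\subseteq N^{2\epsilon}(A,X)$. The constants work out to the same $2\dgh$ bound.

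Part (ii) has a genuine gap in the round-trip step, and your proposed fix does not address it. In the trifiltration $\vr^{*,*,*}$ a simplex is an \emph{inclusion chain} $A_0\subseteq\cdots\subseteq A_n$, so contiguity of the inclusion with the round trip would require that for every such chain the collection $\{A_0,\dots,A_n,(A_0^R)^{R^{-1}},\dots,(A_n^R)^{R^{-1}}\}$ be totally ordered by inclusion. This fails in general: $A_i$ and $(A_j^R)^{R^{-1}}$ need not be comparable, and this remains true even on the set $X_0$ where the round trip contains $A$. Enlarging the parameter $t$ only admits sets of smaller measure into the complex; it cannot create the missing order-comparability, so ``absorbing the exceptional set by an extra $\eta/2$ shift in $t$'' does not produce a contiguity or a homotopy. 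The paper resolves exactly this point differently: it introduces a dominating monotone operator $\Gamma$ (there, $\Gamma(A)=N^{2\epsilon}(A,X)$) with $A\subseteq\Gamma(A)$ and $G(F(A))\subseteq\Gamma(A)$, and builds explicit simplicial homotopies $\vr^{r,s,t}(\calX)\times I\to\vr^{r+4\epsilon,s+4\epsilon,t+4\epsilon}(\calX)$ from the inclusion to $\Gamma_*$ and from $G_*F_*$ to $\Gamma_*$, using that a pointwise inequality $f\leq g$ between monotone operators on an order complex induces a simplicial homotopy. Your argument can be repaired the same way: take $\Gamma(A):=A\cup(A^R)^{R^{-1}}$, note $\Gamma$ is monotone, satisfies $\diam(\Gamma(A))\leq r+2\eta$, $\rad(\pi_X(\Gamma(A)))\leq s/2+\eta$ and $\mu_X(\Gamma(A))\geq 1-t$ by the estimates you already have, and run the two simplicial homotopies through $\Gamma$. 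With that insertion the interleaving and the bound $2\dgp$ follow as you claim.

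A secondary point worth a sentence in a final write-up: $A^R$ is the projection of the Borel set $R\cap(A\times Y)$ and is therefore only analytic, so either work in the completion of $\mu_Y$ (the lower bound $\mu_Y(A^R)\geq\mu(R\cap(A\times Y))$ survives), replace $A^R$ by a Borel hull, or use a closed-neighborhood operator as the paper does, so that the images are legitimate vertices of the trifiltration.
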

\begin{proof}
    (i) Let $\phi: \calX \to \calE$ and $\psi:\calY \to \calE$ be isometric embeddings. Abusing notation, we write $d_E(x,y):=d_E(\phi(x),\psi(y))$ for $x \in X$, $y \in Y$. Given $\epsilon>\dha(\phi(X),\psi(Y))$, there exist functions $f: X \to Y$ and $g: Y \to X$ such that $d_E(x,f(x)) < \epsilon$ and $d_E(g(y),y)<\epsilon$, for any $x \in X$ and $y \in Y$. Note that if $A \subseteq X$, $\diam(A) \leq r$ and $\rad(\pi_X(A)) \leq s/2$, then $\diam(f(A)) \leq r+2\epsilon$ and $\rad(\pi_Y(f(A))) \leq s/2 + \epsilon$. Hence $f$ induces a morphism $f_*:\vr^{r,s}(\calX) \to \vr^{r+2\epsilon,s+2\epsilon}(\calY)$. Similarly $g$ induces a morphism $g_*: \vr^{r,s}(\calY) \to \vr^{r+2\epsilon,s+2\epsilon}$. Let us show that $g_* \circ f_*: \vr^{r,s}(\calX) \to \vr^{r+4\epsilon,s+4\epsilon}(\calX)$ is contiguous (hence homotopy equivalent) to the inclusion map. First note that if $A$ is a simplex in $\vr^{r,s}(\calX)$, then $\diam(N^{2\epsilon}(A,X)) \leq r+4\epsilon$ and $\rad(\pi_X(N^{2\epsilon}(A,X)))\leq s/2+2\epsilon$. Since $A \cup g(f(A)) \subseteq N^{2\epsilon}(A,X)$, this implies that $A \cup g(f(A))$ is a simplex in $\vr^{r+4\epsilon,s+4\epsilon}(\calX)$. This shows the required contiguity. Similarly, $f_* \circ g_* : \vr^{r,s}(\calY) \to \vr^{r+4\epsilon,s+4\epsilon}(\calY)$ is contiguous to the inclusion. This completes the proof of part (i).
    
    (ii) Let $\epsilon>\dha(\phi_*(\mu_X),\psi_*(\mu_Y))$. Given a closed subset $A$ of $X$, let $F(A)=\psi^{-1}(N^\epsilon(\phi(A),E)) \subseteq Y$. Similarly, given a closed subset $A$ of $Y$, let $G(A)=\phi^{-1}(N^\epsilon(\psi(A),E)) \subseteq X$. Note that $\diam(F(A)) \leq \diam(A)+2\epsilon$, $\rad(F(A)) \leq \rad(A)+\epsilon$ and $\nu(F(A)) \geq \mu(A)-\epsilon$. $G$ satisfies the same inequalities. Hence, $F$ induces a simplicial map $F_*:\vr^{r,s,t}(\calX) \to \vr^{r+2\epsilon,s+2\epsilon,t+2\epsilon}(\calY)$. Similarly, $G$ induces a simplicial map. We show that $G_* \circ F_*: \vr^{r,s,t}(\calX) \to \vr^{r+4\epsilon,s+4\epsilon,t+4\epsilon}(\calX)$ is homotopy equivalent to the inclusion. We do this by constructing simplicial homotopies between both maps and the simplicial map $\Gamma: \vr^{r,s,t}(\calX) \to \vr^{r+4\epsilon,s+4\epsilon,t+4\epsilon}(\calX)$ given by $\Gamma(A):=N^{2\epsilon}(A,X)$.

    We first construct a simplicial homotopy between the inclusion and $\Gamma$. Let $I$ denote the interval simplicial complex $I:=\{\{0\}, \{1\}, \{0,1\} \}$. The simplicies of the simplicial product $\vr^{r,s,t}(\calX) \times I$ are of the form $\{(A_0,i_0),\dots, (A_n,i_n) \}$ where $A_0 \subseteq A_1 \dots \subseteq A_n \subseteq X$, $\{A_0,\dots,A_n \} \in \vr^{r,s,t}(\calX)$, $i_0, \dots i_n \in \{0,1\}$, $i_0 \leq i_1 \leq \dots i_n $. If we let $H((A,0))=A$ and $H((A,1))=\Gamma(A)$, then it gives a simplicial map $H: \vr^{r,s,t}(\calX) \times I \to  \vr^{r+4\epsilon, s+4\epsilon,t+4\epsilon}(\calX)$, which is the simplicial homotopy between the inclusion and $\Gamma$.

    Since $G(F(A)) \subseteq \Gamma(A)$, if we define $H$ instead by $H((A,0))=G(F(A))$ and $H((A,1))=\Gamma(A)$, then it becomes a simplicial homotopy between $G_* \circ F_*(A)$ and $\Gamma$. Therefore $G_* \circ F_*$ is homotopy equivalent to the inclusion. Similarly, $F_* \circ G_*$ is homotopy equivalent to the inclusion. This proves (ii).
\end{proof}

\section{Summary and Discussion} \label{sec:summary}

This paper studied functional data, termed fields, defined on geometric domains. More precisely, the objects of study were 1-Lipschitz functions between Polish metric spaces with the domain possibly equipped with a Borel probability measure. We addressed foundational questions and developed new approaches to the analysis of datasets consisting of fields not necessarily defined on the same domains. We studied the basic properties of the Gromov-Hausdorff distance between compact fields and how it relates to the Hausdorff distance in a Urysohn universal field via isometric embeddings. Similarly, we investigated analogues of the Gromov-Prokhorov and Gromov-Wasserstein distances between fields on metric-measure domains. 

We introduced a representation of metric-measure fields as probability distributions on the space of (countably) infinite augmented distance matrices and proved a Reconstruction Theorem that extends to $mm$-fields a corresponding result for $mm$-spaces due to Gromov. This provided a pathway to discrete representations of $mm$-fields via distributions of finite-dimensional augmented distance matrices for which we proved a convergence theorem. We also studied field analogues of the neighborhood and Vietoris-Rips filtrations and established stability results with respect to appropriate metrics.

Questions that also are of interest but fall beyond the scope of this paper include: (i) the study of rate of convergence of the probabilistic model based on finite-dimensional augmented distance matrices; (ii) investigation of alternative cost functions in the formulation of the Gromov-Wasserstein distance between $mm$-fields; (iii) the development of computational models and algorithms derived from augmented distance matrices.

\section*{Acknowledgements}

This work was partially supported by NSF grant DMS-1722995.


\bibliographystyle{siam}
\bibliography{realbib}

\appendix
\section{Appendix}\label{sec:appendix}


\begin{proposition}\label{prop:couplings}    
    Let $X,Y$ be Polish spaces and $\mu_X,\mu_Y$ be Borel probability measure on $X$ and $Y$ respectively. Then, every sequence in $\cXY$ has a weakly convergent subsequence.
\end{proposition}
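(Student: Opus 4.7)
The plan is to apply Prokhorov's theorem on the Polish space $X \times Y$. The strategy has three steps: establish tightness of the family $\cXY$, extract a weakly convergent subsequence, and verify that the limit lies in $\cXY$.

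First I would prove tightness. Fix $\epsilon > 0$. Since $\mu_X$ and $\mu_Y$ are Borel probability measures on Polish spaces, each is inner regular by compact sets (Ulam's theorem). Choose compact sets $K_X \subseteq X$ and $K_Y \subseteq Y$ with $\mu_X(X \setminus K_X) < \epsilon/2$ and $\mu_Y(Y \setminus K_Y) < \epsilon/2$. Then $K := K_X \times K_Y$ is compact in $X \times Y$, and for any $\mu \in \cXY$,
\begin{equation*}
\mu((X\times Y) \setminus K) \leq \mu((X \setminus K_X) \times Y) + \mu(X \times (Y \setminus K_Y)) = \mu_X(X \setminus K_X) + \mu_Y(Y \setminus K_Y) < \epsilon,
\end{equation*}
using that the marginals of $\mu$ are $\mu_X$ and $\mu_Y$. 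This shows $\cXY$ is a uniformly tight family.

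Next, since $X \times Y$ is Polish, Prokhorov's theorem yields that the tight family $\cXY$ is relatively compact in the topology of weak convergence. Hence any sequence $(\mu_n) \subseteq \cXY$ has a subsequence $(\mu_{n_k})$ converging weakly to some Borel probability measure $\mu$ on $X \times Y$.

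Finally I would check $\mu \in \cXY$, i.e., that its marginals are $\mu_X$ and $\mu_Y$. Given $f \in C_b(X)$, the function $f \circ \pi_X \in C_b(X \times Y)$, so by weak convergence
\begin{equation*}
\int_X f \, d(\pi_X)_*\mu = \int_{X \times Y} f \circ \pi_X \, d\mu = \lim_k \int_{X \times Y} f \circ \pi_X \, d\mu_{n_k} = \lim_k \int_X f \, d\mu_X = \int_X f \, d\mu_X.
\end{equation*}
Since bounded continuous functions determine a Borel probability measure on a Polish space, $(\pi_X)_*\mu = \mu_X$; the analogous argument gives $(\pi_Y)_*\mu = \mu_Y$. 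The main (and only) substantive obstacle is the tightness verification, but as shown above it follows essentially for free from the tightness of each marginal and the constraint on the marginals of couplings; the rest of the argument is a standard application of Prokhorov's theorem and the definition of weak convergence.
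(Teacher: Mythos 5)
Your proof is correct and follows essentially the same route as the paper: uniform tightness of $\cXY$ from tightness of the two marginals, then Prokhorov's theorem, then the observation that $\cXY$ is closed under weak convergence. The only cosmetic difference is that you verify the marginal condition on the limit directly via test functions $f\circ\pi_X$ with $f \in C_b(X)$, whereas the paper cites this closedness as a known result.
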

\begin{proof}
By \cite[Theorem~9.3.7]{dudley2018real}, $\cXY$ is closed under weak convergence. By Prokhorov's Theorem \cite[Theorem~11.5.4]{dudley2018real}, it is enough to show that $C(\muX,\muY)$ is uniformly tight. Since $X$ and $Y$ are Polish, $\muX$ and $\muY$ are tight measures (\cite[Theorem~7.1.4]{dudley2018real}). Let $\epsilon>0$. There are compact subspaces $K_X \subseteq X$ and $K_Y \subseteq Y$ so that $\muX(K_X)>1-\epsilon/2$ and $\muY(K_Y)>1-\epsilon/2$. Then, for any $P$ in $\cXY$, we have 
    \begin{equation}
    \begin{split}
        P(K_X \times K_Y) &= P((K_X \times Y) \cap (X \times K_Y)) \\
        &\geq P(K_X \times Y) + P(X \times K_Y) -1 \\
        &= \muX(K_X) + \muY(K_Y)-1 \geq 1-\epsilon.
    \end{split}
    \end{equation}
    This concludes the proof.
\end{proof}

\begin{proposition}\label{prop:almostuniform}
Let $1\leq p < \infty$ and $(X,\dX,\muX)$ be a metric measure space such that $\mu_X$ has finite moments of order $p$, where $1 \leq p < \infty$. Given an integer  $n>0$ and $\epsilon>0$, let 
\begin{equation*}
\unxpe:=\{(x_i) \in X^n \colon \dwp(\muX, \sum_{i=1}^n \delta_{x_i}/n) \leq \epsilon \} .
\end{equation*}
Then, for $n$ sufficiently large, we have
 \begin{equation*}
 \mu^N(\unxpe) \geq 1 - \epsilon.
 \end{equation*}
\end{proposition}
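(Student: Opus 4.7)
The plan is to interpret $\mu_X^n(\unxpe)$ probabilistically and then invoke the classical fact that empirical measures converge almost surely in $p$-Wasserstein distance whenever the target has finite $p$-th moment. Let $(X_i)_{i \geq 1}$ be the coordinate projections on $(X^\infty, \mu_X^\infty)$; these are i.i.d.\ with common law $\mu_X$. Write $\hat\mu_n := \frac{1}{n}\sum_{i=1}^n \delta_{X_i}$ for the random empirical measure. Then, viewed as a subset of $X^n$, the set $\unxpe$ is exactly $\{\dwp(\mu_X,\hat\mu_n) \leq \epsilon\}$, and so
\[
\mu_X^n(\unxpe) = \mathbb{P}\bigl(\dwp(\mu_X,\hat\mu_n) \leq \epsilon\bigr).
\]
It is therefore sufficient to show that $\dwp(\mu_X,\hat\mu_n) \to 0$ almost surely, since almost sure convergence implies convergence in probability and hence $\mu_X^n(\unxpe) \to 1$ as $n \to \infty$.

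The first step is to note that the map $(x_1,\ldots,x_n) \mapsto \dwp(\mu_X, \frac{1}{n}\sum_i \delta_{x_i})$ on $X^n$ is continuous: the synchronous coupling $\frac{1}{n}\sum_i \delta_{(x_i,x_i')}$ gives $\dwp(\hat\mu_n,\hat\mu_n') \leq (\frac{1}{n}\sum_i d_X(x_i,x_i')^p)^{1/p}$, and the triangle inequality for $\dwp$ then yields continuity. In particular $\unxpe$ is Borel, so the probability above is well defined.

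The second step is weak convergence $\hat\mu_n \to \mu_X$ almost surely, which is Varadarajan's theorem \cite[Theorem~11.4.1]{dudley2018real}. The third step is convergence of $p$-th moments: since $\mu_X$ has finite $p$-th moment, the function $x \mapsto d_X(x,x_0)^p$ (for any fixed basepoint $x_0$) is $\mu_X$-integrable, so by the strong law of large numbers
\[
\int d_X(\cdot,x_0)^p \, d\hat\mu_n = \frac{1}{n}\sum_{i=1}^n d_X(X_i,x_0)^p \longrightarrow \int d_X(\cdot,x_0)^p \, d\mu_X \quad \text{a.s.}
\]
Combining weak convergence with convergence of $p$-th moments against $d_X(\cdot,x_0)^p$ is exactly the standard criterion for $\dwp$-convergence on Polish spaces \cite[Theorem~6.9]{villani2008optimal}, yielding $\dwp(\hat\mu_n,\mu_X) \to 0$ almost surely, and hence $\mu_X^n(\unxpe) \to 1$. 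For $n$ sufficiently large this forces $\mu_X^n(\unxpe) \geq 1-\epsilon$, as claimed.

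No step is really an obstacle: the argument is a straightforward packaging of Varadarajan's theorem, the SLLN applied to $d_X(\cdot,x_0)^p$, and the standard characterization of $\dwp$-convergence. The only mildly delicate point is the Borel measurability of $\unxpe$, which is handled by the Lipschitz estimate in the first step; everything else is a direct invocation of classical empirical-process results.
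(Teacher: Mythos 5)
Your proof is correct and follows essentially the same route as the paper's: identify $\unxpe$ with the event $\{\dwp(\mu_X,\hat\mu_n)\leq\epsilon\}$, use Varadarajan's theorem for almost sure weak convergence of the empirical measures, upgrade to almost sure $\dwp$-convergence via the characterization in \cite[Theorem~6.9]{villani2008optimal}, and pass to convergence in probability. If anything, you are slightly more careful than the paper, which leaves the convergence of $p$-th moments (your SLLN step) and the measurability of $\unxpe$ implicit.
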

\begin{proof}
 Let $P_p(X)$ denote the set of Borel probability measures on $X$ with finite moments of order $p$. $P_p(X)$  is metrizable by $\dwp$, and the corresponding notion of convergence is weak convergence \cite[Theorem~6.9]{villani2008optimal}. Furthermore, $P_p(X)$ is complete and separable \cite[Theorem~6.18]{villani2008optimal}. Let $\pi_n \colon X^\infty \to X^n$ denote the projection onto the first $n$ coordinates and $\psi_n \colon X^\infty \to P_p(X)$ be the map given by
 \begin{equation}
\psi_n((x_i)):= \sum_{i=1}^n \delta_{x_i}/n.
 \end{equation}
 This is a continous map. By Varadarajan Theorem \cite[Theorem~11.4.1]{dudley2018real}, $(\psi_n)$ converges to $\muX$ almost surely. By \cite[Theorem~9.2.1]{dudley2018real}, $(\psi_n)$ converges to $\mu$ in probability. Hence, for $n$ large enough, 
\begin{equation}
1 - \epsilon \leq \mu^\infty(\dwp(\muX,\psi_n) \leq \epsilon) = \mu^\infty(\pi_n^{-1}(\unxpe))=\mu^n(\unxpe),
\end{equation}
as desired.
\end{proof}

\begin{proposition}\label{prop:dndifference}
Let $\calX$ and $\calY$ be $mm$-fields over $B$. For $\epsilon >0$, let $\unxpe$ and $\unype$ be defined as in Proposition \ref{prop:almostuniform}. If $(x_i) \in \unxpe$, and $(y_i) \in \unype$, then
 \begin{equation*}
 \rho_n (\calFXn((x_n)), \calFYn((y_n))) \geq \dgwp(\calX,\calY) - 2\epsilon.
 \end{equation*}
\end{proposition}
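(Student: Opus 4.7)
The plan is to sandwich $\calX$ and $\calY$ between two auxiliary finite $mm$-fields $\calE_n$ and $\calF_n$ built directly from the sampled sequences $(x_i)$ and $(y_i)$, and then apply the triangle inequality for $\dgwp$ three times. This mirrors the construction already used in the proof of Theorem \ref{thm:gwuniform}.

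First I would define $\calE_n$ as the (pseudo-)$mm$-field on $E_n := \{1,\dots,n\}$ with pseudo-metric $d_{E_n}(i,j) := d_X(x_i,x_j)$, field $\pi_{E_n}(i) := \pi_X(x_i)$, and normalized counting measure $\mu_{E_n} := \frac{1}{n}\sum_{i=1}^n \delta_i$; define $\calF_n$ analogously from $(y_i)$. The map $E_n \to X$, $i \mapsto x_i$, together with the identity on $X$, exhibits both $\calX$ and $\calE_n$ inside $\calX$ via isometric maps commuting with $\pi_X$, and pushes $\mu_{E_n}$ forward to $\frac{1}{n}\sum_{i=1}^n \delta_{x_i}$. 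Applying Proposition \ref{prop:gwembedding} with ambient space $X$ then gives
\[
\dgwp(\calX,\calE_n) \;\leq\; \dwp\Bigl(\mu_X,\, \tfrac{1}{n}\sum_{i=1}^n \delta_{x_i}\Bigr) \;\leq\; \epsilon,
\]
since $(x_i) \in \unxpe$, and symmetrically $\dgwp(\calY,\calF_n) \leq \epsilon$.

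Next I would bound $\dgwp(\calE_n,\calF_n)$ above by $\rho_n(\calFXn((x_i)),\calFYn((y_i)))$ using the diagonal coupling $\zeta_n \in C(\mu_{E_n},\mu_{F_n})$ given by $\zeta_n(\{(i,i)\}) = 1/n$. Because $\supp(\zeta_n)$ lies on the diagonal, integrating $m_{E_n,F_n}$ against $\zeta_n \otimes \zeta_n$ and $d_{E_n,F_n}$ against $\zeta_n$ yields $p$-averages bounded above by the corresponding suprema over indices $i,j$; this works uniformly in $1 \leq p \leq \infty$ and gives
\[
\dgwp(\calE_n,\calF_n) \;\leq\; \max\Bigl\{\tfrac{1}{2}\sup_{i,j}|d_X(x_i,x_j)-d_Y(y_i,y_j)|,\; \sup_i d_B(\pi_X(x_i),\pi_Y(y_i))\Bigr\} \;=\; \rho_n(\calFXn((x_i)),\calFYn((y_i))).
\]
Combining the three bounds through the triangle inequality for $\dgwp$ gives $\dgwp(\calX,\calY) \leq 2\epsilon + \rho_n(\calFXn((x_i)),\calFYn((y_i)))$, which rearranges to the claim.

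The main delicate point, rather than a genuine obstacle, is that $d_{E_n}$ is only a pseudo-metric when the sampled points are not pairwise distinct, so strictly one passes to the metric quotient before invoking Proposition \ref{prop:gwembedding}. This identification is harmless because distances, the Wasserstein cost, and the value of $\rho_n$ all descend unchanged to the quotient, and it is handled in exactly the same way as in the proof of Theorem \ref{thm:gwuniform}.
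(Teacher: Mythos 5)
Your proposal is correct and follows essentially the same route as the paper: the same auxiliary finite fields built from the samples, the same appeal to Proposition \ref{prop:gwembedding} to get the two $\epsilon$-bounds, the same diagonal coupling to bound $\dgwp(\calE_n,\calF_n)$ by $\rho_n$, and the triangle inequality to conclude. Your remark about passing to the metric quotient when sampled points coincide is a harmless refinement the paper leaves implicit.
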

\begin{proof}
Let $\calE_X$ be the $mm$-field with underlying set $\{1,\dots,n \}$ equipped with the (pseudo)-metric given by $d_E(i,j)=\dX(x_i,x_j)$, normalized counting measure, and $1$-Lipschitz function $i \mapsto \piX(x_i)$. Similarly, define $\calE_Y$ using $(y_n)$. Note that, by Proposition \ref{prop:gwembedding}, $\dgwp(\calE_X,\calX) \leq \epsilon$ and $\dgwp(\calE_Y,\calY) \leq \epsilon$. If $P$ is the diagonal coupling between the measures of $\calE_X$ and $\calE_Y$, then we have
\begin{equation}
\begin{split}
        \rho_n (\calFXn((x_i)), \calFYn((y_i))) &= \max \left\{  \frac{1}{2} \sup_{\sPP} m_{E_X,E_Y}, \sup_{\sP} d_{E_X,E_Y} \right\} \\
        &\geq \dgwp(\calE_X,\calE_Y) \geq \dgwp(\calX,\calY) - 2\epsilon\,,
\end{split}
\end{equation}  
as claimed.
\end{proof}

\end{document}